\theoremstyle{plain}
\newtheorem{theor}{Theorem}[section]
\newtheorem{lem}[theor]{Lemma}
\newtheorem{prop}[theor]{Proposition}
\newtheorem{cor}[theor]{Corollary}
\theoremstyle{definition}
\newtheorem{rem}[theor]{Remark}
\newcommand{\N}{\mathbb N}
\newcommand{\R}{\mathbb R}
\newcommand{\Z}{\mathbb Z}
\newcommand{\Pc}{\mathcal P}
\newcommand{\F}{\mathcal F}
\newcommand{\M}{\mathcal M}
\newcommand{\Tr}{\operatorname{Tr}}
\newcommand{\Id}{\operatorname{Id}}
\newcommand{\p}{\mathbb{P}}
\newcommand{\E}{\mathbb{E}}
\newcommand{\loc}{{\operatorname{loc}}}
\newcommand{\Ld}{\operatorname{L}}
\newcommand{\ho}{\textrm{hom}}
\newcommand{\cvf}[1]{\mathrel{\mathop{\xrightharpoonup{#1}}}}
\newcommand{\step}[1]{\noindent \textit{Step} #1.}
\numberwithin{equation}{section}
\newcommand{\pushright}[1]{\ifmeasuring@#1\else\omit\hfill$\displaystyle#1$\fi\ignorespaces}
\title[Analyticity of homogenized coefficients under Bernoulli perturbations]{Analyticity of homogenized coefficients under Bernoulli perturbations and the Clausius-Mossotti formulas}
\author[M. Duerinckx]{Mitia Duerinckx}
\author[A. Gloria]{Antoine Gloria}
\date{\today}
\address[Mitia Duerinckx]{Aspirant F.N.R.S. \\ D\'epartement de math\'ematique, Universit\'e Libre de Bruxelles, Belgium \\  and MEPHYSTO team, Inria Lille - Nord Europe, Villeneuve d'Ascq, France}
\email{mduerinc@ulb.ac.be}
\address[Antoine Gloria]{D\'epartement de math\'ematique, Universit\'e Libre de Bruxelles, Belgium \\  and MEPHYSTO team, Inria Lille - Nord Europe, Villeneuve d'Ascq, France}
\email{agloria@ulb.ac.be}
\begin{document}
\selectlanguage{english}
\maketitle

This paper is concerned with the behavior of the homogenized coefficients associated with some random stationary ergodic medium under a Bernoulli perturbation. Introducing a new family of energy estimates that combine probability and physical spaces, we prove the analyticity of the perturbed homogenized coefficients with respect to the Bernoulli parameter. Our approach holds under the minimal assumptions of stationarity and ergodicity, both in the scalar and vector cases, and gives analytical formulas for each derivative that essentially coincide with the so-called cluster expansion used by physicists. In particular, the first term yields the celebrated (electric and elastic) Clausius-Mossotti formulas for isotropic spherical random inclusions in an isotropic reference medium. This work constitutes the first general proof of these formulas in the case of random inclusions.

\tableofcontents

\section{Introduction}

Let $\rho=(q_n)_n$ be a stationary and ergodic random point process in $\R^d$ of intensity unity. To each point $q_n$ we associate a family of independent Bernoulli variables $[0,1]\ni p\mapsto b_n^{(p)}$ that takes value $1$ with probability $p$ and value $0$ with probability $1-p$. Given $\alpha,\beta>0$, we define a family of random matrix fields $A^{(p)}$ on $\R^d$ as follows:
$$
A^{(p)}(x)\,:=\,\alpha \Id + \sum_{n=1}^\infty b_n^{(p)} (\beta-\alpha) \Id \mathds1_{B(q_n)}(x),
$$
where $B(q_n)$ denotes the unit volume ball centered at $q_n$ (above we have assumed that the balls $B(q_n)$ are disjoint for simplicity of the discussion). The random matrix fields $A^{(p)}$ are stationary and ergodic, so that the standard stochastic homogenization theory \cite{PapaVara} yields the existence of associated homogenized matrices $A^{(p)}_{\hom}$. 
For $p$ small, we expect $A^{(p)}_{\hom}$ to be a perturbation of order $p$ of the unperturbed medium $\alpha \Id$.
Indeed, in the late 19th century, Clausius (1879) and Mossotti (1850) independently proposed the following expansion in the scalar case (also known as the Maxwell relation \cite{Maxwell-81}):
\begin{equation}\label{intro:eqCM}
A_{\hom}^{(p)}\,=\,\alpha \Id + \frac{\alpha d(\beta-\alpha)}{\beta+\alpha(d-1)} \Id p+ o(p).
\end{equation}
The rigorous proof of this statement and of its counterpart for linear elasticity has remained a challenge since then.
The first justification of the (electric) Clausius-Mossotti relation is due to Almog in dimension $d=3$, whose results in \cite{Almog-13,Almog-14}, combined with elementary homogenization theory, precisely yield \eqref{intro:eqCM} (the convergence rate obtained in \cite[Theorem~1]{Almog-14} is lost when combined with homogenization). The proof is based on (scalar) potential theory and crucially relies on the facts that $d=3$, that $A^{(p)}$ is everywhere a multiple of the identity, and that the inclusions are supposed to be spherical and disjoint.
Another contribution is due to Mourrat \cite{Mourrat-13}, who considered a discrete scalar elliptic equation instead of a continuum elliptic equation for all $d\ge 2$. In the case treated in \cite{Mourrat-13}, $A^{(p)}$ is a discrete set of i.i.d. conductivities $\beta$ and $\alpha$ with probabilities $p$ and $1-p$. The extension of Mourrat's results to the present continuum setting (which is made possible by the recent contributions \cite{Gloria-Marahrens-14,Gloria-Otto-10b}) would yield the improvement
of \eqref{intro:eqCM} to
\begin{equation}\label{intro:eqCM+}
A_{\hom}^{(p)}\,=\,\alpha \Id + \frac{\alpha d(\beta-\alpha)}{\beta+\alpha(d-1)} \Id p+ O(p^{2-\gamma})
\end{equation}
for any $\gamma>0$. The assumptions of \cite{Mourrat-13} are however very stringent, and typically cover the case of a hardcore random Poisson process (i.e. a Poisson point process where overlapping inclusions are deleted). Indeed, Mourrat's proof crucially relies on \emph{quantitative} estimates on the corrector obtained in a series of works \cite{Gloria-Neukamm-Otto-14,Gloria-Otto-09,Gloria-Otto-09b,Marahrens-Otto-13} by the second author, Marahrens, Neukamm, and Otto. These estimates typically hold under a spectral gap estimate assumption, whence the restriction on the random point set. 
There is a small gap in the proof of (the discrete counterpart of) \eqref{intro:eqCM+} (a quantitative  estimate of $|a_1^\circ(\mu)-a_1^\circ(0)|$, see \cite[(8.2)]{Mourrat-13}, is missing to complete the proof of \cite[Theorem~11.3]{Mourrat-13}), which we think can however be fixed using the quantitative results of \cite{Gloria-Otto-09,Gloria-Otto-09b,Marahrens-Otto-13}.
Mourrat also made the nice observation that the reference medium needs not be the unperturbed background medium (of conductivity $\alpha$), and essentially proved that $p\mapsto A^{(p)}_{\hom}$ is $C^{1,1-\gamma}$ on the whole interval $[0,1]$ for all $\gamma>0$ in the discrete setting.

\medskip

The idea of perturbing a non-uniform reference medium first appeared in the work \cite{Anantharaman-LeBris-11} by Anantharaman and Le Bris, who considered the perturbation of a periodic array of inclusions by i.i.d. Bernoulli variables (the inclusions are independently deleted with probability $p$). The corresponding matrix field $A^{(p)}$ is then a random ergodic field with discrete stationarity. As above, for $p$ small, one expects $A^{(p)}_{\hom}$ to be a perturbation of  $A^{(0)}_{\hom}$ (the homogenized coefficients of the unperturbed \emph{periodic} medium) of order $p$. Anantharaman and Le Bris considered the approximation $A_L^{(p)}$ of $A^{(p)}_{\hom}$ obtained by periodizing the random medium $A^{(p)}$ on a cube of size $L$. The qualitative homogenization theory ensures that almost surely the random approximation $A^{(p)}_L$ of $A^{(p)}_{\hom}$ converges to $A_{\hom}^{(p)}$ (note that $A_L^{(0)}$ is deterministic and coincides with $A_{\hom}^{(0)}$ for all $L\in \N$). Although not formulated this way, they essentially proved that the approximation $p\mapsto \mathbb{E}[A^{(p)}_L]$ is $C^2$ at $p=0$, and obtained bounds on the first two derivatives $\partial_p \mathbb{E}[A^{(p)}_L]|_{p=0}$ and $\partial^2_{p} \mathbb{E}[A^{(p)}_L]|_{p=0}$ that are uniform in $L$. This is however not quite enough to prove that $p\mapsto A^{(p)}_{\hom}$ is itself $C^2$ (or $C^{1}$) at zero. 

\medskip

In the present contribution we shall prove in a very general setting (which includes both the examples studied by Mourrat and by Anantharaman and Le Bris) that the map $p\mapsto A^{(p)}_{\hom}$ is analytic on $[0,1]$ (see Theorem~\ref{th:analytic}).
Our result holds under the mildest statistical assumptions on the reference medium $x\mapsto A^{(0)}(x)$ and on the point process, that is (discrete or continuum) stationarity and ergodicity.
We also make the (crucial) technical assumption that the number of  intersections is uniformly bounded (see however Remark~\ref{rem:asboundint} for the specific example of a Poisson point process). 
We also believe that a suitable adaptation of our arguments may allow to treat the case when the Bernoulli law is replaced by more general laws as considered in \cite[Section~3]{Anantharaman-LeBris-12}.
Although our results are much stronger than those of Mourrat \cite{Mourrat-13}, our proof was mainly inspired by the ingenious computations of Anantharaman and Le Bris (see in particular \cite[Proposition~3.4]{Anantharaman-these}), and only relies on soft arguments. 
In particular the crucial ingredient of our proof is a new family of energy estimates that combine both physical and probability spaces, see Proposition~\ref{prop:apimproved} below. Since the proof only uses ingredients that are available for systems, our results hold not only for scalar equations, but also for uniformly elliptic systems and for linear elasticity. In the case of an isotropic constant background medium perturbed by randomly distributed isotropic spherical inclusions, this proves the celebrated (electric) Clausius-Mossotti formula with an optimal error estimate (see Corollary~\ref{cor:cm}),
\begin{equation}\label{intro:eqCM++}
A_{\hom}^{(p)}\,=\,\alpha \Id + \frac{\alpha d(\beta-\alpha)}{\beta+\alpha(d-1)} \Id p+ O(p^{2}),
\end{equation}
as well as its elastic counterpart (see Corollary~\ref{cor:cm2}), under the weakest assumptions possible. 

Our proof makes use of the standard modification of the corrector equation by a massive term of magnitude $T^{-1}$, and the derivatives of $A_{\hom}^{(p)}$ with respect to $p$ are given by the limits as $T\uparrow \infty$ of the derivatives of a deterministic approximation $A_T^{(p)}$ of $A_{\hom}^{(p)}$ ($\sqrt{T}$ plays a similar role as the period $L$ in \cite{Anantharaman-LeBris-11}). Interestingly, in the case when the inclusions are disjoint, the fact that $p\mapsto A_{\hom}^{(p)}$ is $C^{1,1}$ on $[0,1]$ can be obtained as a corollary of the (classical) energy estimates of \cite{Anantharaman-LeBris-11,Anantharaman-these} (applied to $A_T^{(p)}$ instead of $A_L^{(p)}$), further using that they hold for any $p\in [0,1]$ by Mourrat's observation, see Section~\ref{sec:strategy}. The proof that  $p\mapsto A_{\hom}^{(p)}$ is analytic is however more subtle and is based on the new family of energy estimates.

There are two motivations to go beyond $C^{1,1}$.
First, this gives a definite answer to the maximal regularity of the map $p\mapsto A_{\hom}^{(p)}$.
Analyticity was indeed conjectured in applied mechanics and applied physics, see for instance \cite[Chapters~18 \&~19]{Torquato-02}.
This analyticity result contrasts very much with the corresponding regularity of a similar model originally studied by Maxwell \cite{Maxwell-81} and Rayleigh \cite{Rayleigh-92}, see also \cite[Section~1.7]{JKO94} and \cite{Berdichevski-83}. The latter consists of a homogeneous medium periodically perturbed by inclusions of volume $p$ located at integer points of $\R^d$ (note that the perturbed medium is still $\Z^d$-periodic), in which case the associated map $p\mapsto A_{\hom}^{(p)}$ is $C^{3+\frac{4}{d}}$ in dimension $d$, but not more.
The second motivation stems from numerics. Indeed the main motivation for \cite{Anantharaman-LeBris-11,Anantharaman-LeBris-12} is to exploit the perturbative character of $A^{(p)}_{\hom}$ for $p$ small (seen as a model for defects) and use the first two terms of a Taylor-expansion at zero as a good approximation for $A^{(p)}_{\hom}$, the accuracy of which can be optimally quantified by Theorem~\ref{th:analytic} below. Note that Legoll and Minvielle \cite{Legoll-Minvielle-14} made an original use of this approximation of $A_{\hom}^{(p)}$ in a control variate method to reduce the variance for the approximation of homogenized coefficients. 

As emphasized in \cite{Anantharaman-LeBris-11}, a second natural question is to quantify the convergence speed of these approximations of the derivatives of $A_{\hom}^{(p)}$ (which are indeed computable in practice). As opposed to Theorem~\ref{th:analytic}, which is a \emph{qualitative} result, establishing such a convergence speed requires to quantify the speed of convergence in the ergodic theorem for stochastic homogenization, which is a \emph{quantitative} result. 
We give such a result under the assumption that the speed of convergence of  $A_T$ to $A_\ho$ can be quantified, which typically (but by far not exclusively) holds in the case that the equation is scalar and that $A^{(p)}$ satisfies a spectral gap inequality (as assumed in \cite{Mourrat-13}) using results of \cite{Gloria-Otto-10b} (see Corollary~\ref{cor:rates}).

\bigskip

Our approach to prove Theorem~\ref{th:analytic} is constructive and explicit bounds are obtained on the derivatives.
We do not know whether there is an abstract alternative to prove analyticity.
In \cite{Cohen-Devore-Schwab-11}, Cohen, Devore and Schwab obtained a result of the same flavor using a complexification method: they proved the analyticity of the solution of linear elliptic PDEs with respect to parameters in the coefficients in the framework of a chaos expansion. Their setting is however very much different from the setting of Theorem~\ref{th:analytic}.
Indeed, as emphasized in Remark~\ref{rem:ALB} below, the solution of interest here (the corrector) is not even differentiable with respect to the Bernoulli parameter (the homogenized coefficients are analytic because of subtle cancellations). 

It is also not clear to us how much the Bernoulli law can be relaxed to allow for correlations.
In particular it would be interesting to determine whether the regularity of the homogenized coefficients depends on the decay of correlations of the generalization of the Bernoulli law.

\bigskip

For the clarity of the exposition, although our proof of Theorem~\ref{th:analytic} holds in the case of uniformly elliptic systems and of linear elasticity (provided the elasticity tensor is uniformly very strongly elliptic, as standard in homogenization), for non-symmetric coefficients, and for discrete elliptic equations, we use continuum scalar notation and assume the coefficients are symmetric. For non-symmetric coefficients, it is indeed enough to consider, in addition to the primal corrector equation, the dual corrector equation (associated with the pointwise transpose coefficients), which would only make notation heavier. In addition we assume that the coefficients enjoy continuum stationarity (in the case of $\Z^d$-stationarity, the expectation would simply be replaced everywhere by the expectation of the integral over the unit cube). 
Note that our result also covers the case of laminates, or more generally the case when the heterogeneous coefficients are random in some direction(s) and invariant along the other direction(s) (cf. the example of cylindrical fibers considered in \cite{Anantharaman-LeBris-11} and encountered in practice).

\bigskip

The rest of the article is organized as follows.
In Section~\ref{sec:main} we introduce the main notation and state the main results of the paper: the analyticity of the homogenized coefficients with respect to the Bernoulli parameter and the validity of the Clausius-Mossotti formulas.
In Section~\ref{sec:strategy}, we present the general strategy of the proof under the additional simplifying assumption that the inclusions are disjoint.
Section~\ref{sec:proofs} is dedicated to the introduction and proofs of auxiliary results, and in particular of the improved energy estimates.
The main results are proved in Section~\ref{sec:analytic}.

\section{Main results}\label{sec:main}

\subsection{Assumptions}\label{sec:assumptions}
Let $A$ be a random field.
We choose a point process $\rho=(q_n)_n$, and random bounded inclusions $(J_n)_n$ centered at the points $q_n$. 
To the inclusions $(J_n)_n$ we attach i.i.d. Bernoulli variables $(b^{(p)}_n)_n$ with parameter $p\in[0,1]$, and we perturb $A$ on $J_n$ if  $b_n^{(p)}=1$.
The only assumptions we need here are stationarity and ergodicity, as well as some deterministic bound on the degree of intersections between the inclusions. More precise definitions are given below.

In the rest of this paper, we denote by $B_r(x)$ the ball of radius $r$ centered at $x$ in $\R^d$, and we simply write $B_r:=B_r(0)$ for balls centered at the origin, and set $B:=B_1(0)$. We also denote by $Q$ the unit cube $[-\frac{1}{2},\frac{1}{2})^d$.

\medskip
{\bf Point process.} Let $\rho$ be a (locally finite) ergodic stationary point process on $\R^d$, and choose for convenience a measurable enumeration $\rho=(q_n)_{n=1}^\infty$. For any open set $D$ of $\R^d$, we denote by $\rho(D):=\#\{q_n \in D,n\in \N\}$ the number of points of $\rho$ in $D$.

\medskip
{\bf Inclusions centered at the point process.} Let $R>0$ be fixed. For all $n$, let $J_n^\circ$ be random Borel subsets $J_n^\circ\subset B_R(\subset\R^d)$ (maybe depending on $\rho=(q_n)_n$). This defines random bounded Borel inclusions $J_n:=q_n+J_n^\circ$. We assume that this inclusion process is stationary, in the sense that the random set $\bigcup_nJ_n$ is stationary. Moreover, we further assume that the intersections between the inclusions $J_n$'s are of degree bounded by some deterministic constant $\Gamma\in \N$; by stationarity, this just means
\begin{align}\label{eq:boundrho}
\#\{n\in\N:0\in J_n\}\le\Gamma,\qquad\text{almost surely.}
\end{align}
In physics (see e.g. \cite[Section~3.1]{Torquato-02}), the constant $\Gamma$ in~\eqref{eq:boundrho} is called the impenetrability parameter: different inclusions may penetrate each other but only with the fixed finite maximum degree $\Gamma$. 

As $J_n\subset B_R(q_n)$, assumption~\eqref{eq:boundrho} is trivially satisfied if we assume $\rho(Q)\le\theta_0$ a.s. (thus forbidding arbitrary large clusters in the point process), but it is important to note that the only problem is the possibility of intersections of arbitrary large degree, which has a priori nothing to do with the point process $\rho$ itself. In the case of inclusions with inner radius bounded from below, however, assumption~\eqref{eq:boundrho} is equivalent to an assumption on $\rho$ of the form $\rho(Q)\le\theta_0$ almost surely.

\medskip
{\bf Reference random fields.} Given $0<\lambda\le 1$, denote by $\M_{\lambda}$ the space of uniformly elliptic symmetric $d\times d$-matrices $M$ satisfying $\lambda|\xi|^2\le\xi\cdot M\xi\le |\xi|^2$ for all $\xi\in \R^d$. Let $A,A'$ be two $\M_{\lambda}$-valued ergodic stationary random fields on $\R^d$. Note that $A$ and $A'$ do not need to be independent of the point process $\rho$, and we simply assume that this dependence is local, in the sense that $A(0)$ and $A'(0)$ only depend on $\rho$ via the restriction $\rho|_{B_r}$ for some given deterministic $r>0$.

\medskip
{\bf Bernoulli perturbation of $A$.} For any fixed $p\in[0,1]$, we choose a sequence $(b_n^{(p)})_n$ of i.i.d. Bernoulli random variables with $\p[b_n^{(p)}=1]=p$, independent of all previous random elements. We can now consider the following $p$-perturbed random field, which is a perturbation of the random field $A$ on the inclusions for which $b_n^{(p)}=1$:
\[A^{(p)}=A\mathds1_{\R^d\setminus\bigcup_{n\in E^{(p)}}J_n}+A'\mathds1_{\bigcup_{n\in E^{(p)}}J_n},\]
where we have set $E^{(p)}:=\{n\in\N:b_n^{(p)}=1\}$. 

In the case when the inclusions are disjoint, the $p$-perturbed random field $A^{(p)}$ can be rewritten as follows:
\[A^{(p)}=\sum_n\left(b_n^{(p)}A'+(1-b_n^{(p)})A\right)\mathds1_{J_n}+A\mathds1_{\R^d\setminus\bigcup_nJ_n}.\]
Moreover, in that case, as $A'$ is allowed to depend (locally) on the inclusion process, the following interesting particular example can be considered: choose a sequence $(A_n')_n$ of i.i.d. $\M_{\lambda}$-valued random fields, and define
\begin{align}\label{eq:reformA'A_n'}
A':=\mathrm{Id}\mathds1_{\R^d\setminus\bigcup_nJ_n}+\sum_n A_n'\mathds1_{J_n},
\end{align}
so that $A^{(p)}$ takes the form
\[A^{(p)}=\sum_n\left(b_n^{(p)}A_n'+(1-b_n^{(p)})A\right)\mathds1_{J_n}+A\mathds1_{\R^d\setminus\bigcup_nJ_n}.\]

\medskip
{\bf Probability space and product structure.}
Let us now briefly comment on the underlying probability space. Let $(\Omega_1,\F_1,\p_1)$ be a probability space on which the (stationary) random elements $\rho$, $(J_n)_n$, $A$, and $A'$ are defined. For all $p\in[0,1]$ and $n\in\N$, let $\Omega_{2,n}^{(p)}:=\{b_n^{(p)}\in \{0,1\}\}$, endowed with the trivial $\sigma$-algebra $\F_{2,n}^{(p)}$, and let $\p_{2,n}^{(p)}$ be the Bernoulli measure of parameter $p$ on $\Omega_{2,n}^{(p)}$. The probability space we consider in this article is the product space $(\Omega,\F,\p)$ of $(\Omega_1,\F_1,\p_1)$ and $(\Omega_{2,n}^{(p)},\F_{2,n}^{(p)},\p_{2,n}^{(p)})$ for all $p\in[0,1]$ and $n\in\N$ (with the cylindrical $\sigma$-algebra). With $\E$, $\E_1$, $\E_{2,n}^{(p)}$ the expectations with respect to the measures $\p$, $\p_1$, $\p_{2,n}^{(p)}$, respectively, we have by definition $\E=\E_1\prod_{p\in[0,1]}\prod_{n\in\N}\E_{2,n}^{(p)}$.

The independence of the Bernoulli variables, at the origin of this product structure, then takes the form: for any integrable random variables $\chi_n^{(p)}$ and $\eta_n^{(p)}$ defined on $\Omega_1\times\prod_{m,m\ne n}\Omega_{2,m}^{(p)}$ and $\Omega_{2,n}^{(p)}$, respectively,
\begin{align}\label{prod-struc}
\E[\chi_n^{(p)}\eta_n^{(p)}]=\E[\chi_n^{(p)}]\E[\eta_n^{(p)}].
\end{align}

Note that for all $p\in [0,1]$ the random field $A^{(p)}$ is defined on $\Omega_1\times\prod_{n\in\N}\Omega_{2,n}^{(p)}$ and is stationary and ergodic for the measure $\p_1\otimes\bigotimes_{n\in\N}\p_{2,n}^{(p)}$. As such, it can be viewed as a stationary and ergodic random field on $(\Omega,\F,\p)$.

\medskip
{\bf Typical examples.}
One typical example is that of spherical inclusions $J_n=B_R(q_n)$ centered at the points of any ergodic stationary random point process $\rho$ with minimal distance bounded away from $0$. In that case, $\rho$ can be chosen as the hardcore Poisson point process (that is, a modified Poisson process for which points that are at a distance less than $2R_0$ are deleted; see also the hardcore construction in Step~1 of the proof of Theorem~\ref{th:analytic} in Section~\ref{chap:proofth1}) or the random parking measure (see \cite{Penrose-01}, and see also~\cite{Gloria-Penrose-13} for the ergodicity). Instead of spherical inclusions, we can  consider more general (and random) shapes $J_n=q_n+J_n^\circ$, where the $J_n^\circ$'s are i.i.d. copies of some random Borel set, with $J_n^\circ\subset B_R$ a.s.

Another interesting example is when $\rho$ is a Poisson process (or {\it any} other ergodic stationary point process) and when $J_n$ is the Voronoi cell at $q_n$, intersected with the ball at $q_n$ of radius $R$, say. We could alternatively choose for $J_n$ the largest ball of radius less than $R$ centered at $q_n$ and completely included in the Voronoi cell at $q_n$. In this case, the number of inclusions per unit volume is not necessarily uniformly bounded.

\subsection{Notation}

We start with the definition of homogenized coefficients, correctors, and approximate correctors. We then introduce the crucial notion of difference operators, and conclude with the introduction of a notational system for perturbed coefficients which will turn out to be very convenient when it comes to the (numerous) combinatorial arguments involved in the proofs.
  
\medskip
{\bf Correctors, approximate correctors, and homogenized coefficients.}
For any (possibly infinite) subset $E\subset\N$, we define $A^E:=A+C^E$, where $C^E:=(A'-A)\mathds1_{J^E}$ and $J^E:=\bigcup_{n\in E}J_n$.
In these terms, with $E^{(p)}:=\{n\in\N\,:\,b_n^{(p)}=1\}$, we have $A^{E^{(p)}}\,=\,A+C^{E^{(p)}}$, and we use the short-hand notation $C^{(p)}:=C^{E^{(p)}}$ and $A^{(p)}:=A^{E^{(p)}}$. 

For all $T>0$, we define the {\it approximate correctors} $\phi_{T,\xi}$ and $\phi_{T,\xi}^E$ in direction $\xi$, $|\xi|=1$, associated with any field $A$ and any $A^E$, respectively, as the unique solutions in the space $\{v\in H^1_\loc(\R^d):\sup_z\int_{B(z)}(|v|^2+|\nabla v|^2)<\infty\}$ of the equations
\begin{equation}\label{eq:modif-corr}
\frac1T\phi_{T,\xi}-\nabla\cdot A(\nabla\phi_{T,\xi}+\xi)=0,\qquad\text{and}\qquad\frac1T\phi_{T,\xi}^E-\nabla\cdot A^E(\nabla\phi_{T,\xi}^E+\xi)=0.
\end{equation}
These solutions satisfy the following energy estimates (see \cite[Lemma~2.7]{Gloria-Otto-10b}):
\begin{equation}\label{eq:modif-corr-estim}
\sup_z\fint_{B_{\sqrt{T}}(z)}(T^{-1}|\phi_{T,\xi}|^2+|\nabla \phi_{T,\xi}|^2)\lesssim1,\qquad \sup_z\fint_{B_{\sqrt{T}}(z)}(T^{-1}|\phi_{T,\xi}^E|^2+|\nabla \phi_{T,\xi}^E|^2)\,\lesssim \,1.
\end{equation}
To shorten notation, we write $\phi_{T,\xi}^{(p)}$ for $\phi_{T,\xi}^{E^{(p)}}$. For all $p\in [0,1]$, as the random field $A^{(p)}$ is ergodic and stationary, we have (combine for instance the ergodic theorem with~\cite{PapaVara} in the symmetric case, and~\cite[Theorem~1]{Gloria-12} in the non-symmetric case)
\begin{equation}\label{eq:reg-corr-grad}
\lim_{T\uparrow \infty}\E[|\nabla \phi_{T,\xi}^{(p)}-\nabla \phi_\xi^{(p)}|^2]\,=\,0,
\end{equation}
where $\nabla\phi_\xi^{(p)}$ is the gradient of the corrector, i.e. the gradient of the unique measurable random map $\phi_\xi^{(p)}\in \Ld^2_\loc(\R^d)$ solution of the equation
\[-\nabla\cdot A^{(p)}(\nabla\phi_\xi^{(p)}+\xi)=0\]
on $\R^d$ that satisfies $\phi_\xi^{(p)}(0)=0$ almost surely and such that $\nabla \phi_\xi^{(p)}$ is stationary and has bounded second moment.
(Note that $\phi_{T,\xi}$ exists for any matrix field $A$ if $T>0$, whereas $\phi_\xi$ only exists almost surely for a stationary random field $A$.)
As usual, the {\it homogenized coefficients} are then given by
\[\xi\cdot A_{\hom}^{(p)}\xi=\E\left[(\nabla\phi_\xi^{(p)}+\xi)\cdot A(\nabla\phi_\xi^{(p)}+\xi)\right]=\E\left[\xi\cdot A^{(p)}(\nabla\phi_\xi^{(p)}+\xi)\right].\]
They can be approximated by symmetric or non-symmetric approximate homogenized coefficients:
\begin{align}\label{eq:approxhomcoeff}
\xi\cdot A_{\hom}^{(p)}\xi=\lim_{T\uparrow\infty}\E\left[(\nabla\phi_{T,\xi}^{(p)}+\xi)\cdot A^{(p)}(\nabla\phi_{T,\xi}^{(p)}+\xi)\right]=\lim_{T\uparrow\infty}\E\left[\xi\cdot A^{(p)}(\nabla\phi_{T,\xi}^{(p)}+\xi)\right].
\end{align}
We denote by $\xi\cdot A_T^{(p)}\xi:=\E\left[\xi\cdot A^{(p)}(\nabla\phi_{T,\xi}^{(p)}+\xi)\right]$ the non-symmetric approximate homogenized coefficients. When $\xi$ is fixed, we simply write $\phi_T$ for $\phi_{T,\xi}$, $\phi$ for $\phi_\xi$, etc.

\medskip

{\bf Difference operators.}
The aim of this article is to understand how $A^{(p)}_{\hom}$ depends on $p$
for $p$ close to $0$. We shall first study the easier map $p\mapsto A_T^{(p)}$, seen as a function of the approximate corrector $\phi_T^{(p)}$.
Following physicists we introduce for all $n\in \N$ a difference operator $\delta^{\{n\}}$ acting generically on measurable functions of $(\Omega,\F)$, and in particular on approximate correctors as follows: for all $H\subset \N$,
\[\delta^{\{n\}}\phi_T^H:=\phi_T^{H\cup\{n\}}-\phi_T^H.\]
This operator yields a natural measure of the sensitivity of the corrector $\phi_T^H$ with respect to the perturbation of the medium at inclusion $J_n$. 
This is to be compared to the {\it vertical derivative} used in~\cite{Gloria-Otto-09} in the context of quantitative stochastic homogenization and to the {\it randomized derivatives} introduced by Chatterjee~\cite{Chatterjee-08} in the context of Stein's method (see also the Hoeffding decompositions in~\cite{LRP-15}, where these randomized derivatives are used up to any order). For all finite $F\subset\N$, we further introduce the higher-order difference operator $\delta^F=\prod_{n\in F}\delta^{\{n\}}$; more explicitly, this difference operator $\delta^F$ acting on approximate correctors $\phi_T^H$ (for any $H\subset \N$) is defined as follows:
\begin{equation}\label{eq:def-diff}
\delta^{F}\phi_T^H:=\sum_{l=0}^{|F|}(-1)^{|F|-l}\sum_{G\subset F\atop |G|=l}\phi_T^{G\cup H}=\sum_{G\subset F}(-1)^{|F\setminus G|}\phi_T^{G\cup H},
\end{equation}
with the convention $\delta^\varnothing\phi_T^H=(\phi_T^H)^\varnothing:=\phi_T^H$.
Physicists have introduced such operators to derive {\it cluster expansions} (see~\cite{Torquato-02}), which are used as formal proxies for Taylor expansions with respect to the Bernoulli perturbation: up to order $k$ in the parameter $p$, the cluster expansion for the perturbed corrector reads, for small $p\ge0$,
$$
\phi_T^{(p)}\leadsto  \phi_T+\sum_{n\in E^{(p)}}\delta^{\{n\}}\phi_T+\frac1{2!}\sum_{n_1,n_2\in E^{(p)}\atop\text{distinct}}\delta^{\{n_1,n_2\}}\phi_T+\ldots+\frac1{k!}\sum_{n_1,\ldots,n_k\in E^{(p)}\atop\text{distinct}}\delta^{\{n_1,\ldots,n_k\}}\phi_T,
$$
which we rewrite in the more compact form
\begin{align}\label{eq:clusterphiT}
\phi_T^{(p)}\leadsto \sum_{j=0}^k\sum_{F\subset E^{(p)}\atop|F|=j}\delta^F\phi_T=\sum_{j=0}^k\sum_{F\subset E^{(p)}\atop|F|=j}\sum_{G\subset F}(-1)^{|F\setminus G|}\phi_T^{G},
\end{align}
where $\sum_{|G|=j}$ denotes the sum over $j$-uplets of integers (when $j=0$, this sum reduces to the single term $G=\varnothing$).
Intuitively, $\phi_T^{(p)}$ is expected to be close to a series where terms of order $j$ involve a correction due to the interaction of $j$ inclusions (and therefore derivatives of order $j$). Whereas the cluster formula for $A_{\hom}^{(p)}$ in Corollary~\ref{cor:analytic} below holds under the mildest statistical assumptions on the coefficients, the validity of the expansion~\eqref{eq:clusterphiT} is expected to require strong mixing assumptions, cf.~\cite{Mourrat-13} for the first order. This illustrates again the fact that averaged quantities (e.g. homogenized coefficients) are better behaved than pathwise quantities (e.g. correctors).

For convenience, we also set 
\begin{equation}\label{eq:defdeltaxi0}
\delta^F_\xi\phi_T^H:=\delta^F(\phi_T^H+\xi\cdot x), 
\end{equation}
that is, in terms of gradients,
\begin{align}\label{eq:defdeltaxi}
\nabla\delta_\xi^{F}\phi_T^H=\sum_{l=0}^{|F|}(-1)^{|F|-l}\sum_{G\subset F\atop |G|=l}(\nabla\phi_T^{G\cup H}+\xi)=\sum_{G\subset F}(-1)^{|F\setminus G|}(\nabla\phi_T^{G\cup H}+\xi).
\end{align}
By the binomial formula $\sum_{l=0}^{|F|}\binom{|F|}{l}(-1)^{|F|-l}=0$, we have: $\nabla\delta^F_\xi\phi_T^H=\nabla\delta^F\phi_T^H$ for all $F\ne\varnothing$ and $H\subset \N$,  $\nabla\delta^\varnothing_\xi\phi_T^H=\nabla\phi_T^H+\xi$ for all $H\subset \N$, and, for all finite sets $F,G,H\subset \N$,
\begin{equation}\label{eq:defdeltaxi1}
\nabla \delta_\xi^G\phi_T^{F\cup H} \,=\,\sum_{S\subset F}\nabla \delta_\xi^{S\cup G} \phi_T^H.
\end{equation}

\medskip

{\bf Inclusion-exclusion formula.}
When the inclusions are disjoint, we have
\begin{align}\label{eq:exclDISJ}
C^{(p)}=\sum_{n\in E^{(p)}} C^{\{n\}}.
\end{align}
However, when inclusions may overlap, this formula no longer holds since intersections may be accounted for several times.
In the rest of this subsection we define a suitable system of notation to deal with these intersections.

For any (possibly infinite) subset $E\subset\N$, we set $A_E:=A+C_E$, where
$C_E:=(A'-A)\mathds1_{J_E}$ and $J_E:=\bigcap_{n\in E}J_n$. Note that $J_{\{n\}}=J^{\{n\}}=J_n$, and $C^{\{n\}}=C_{\{n\}}$. 
For non-necessarily disjoint inclusions, $C^{(p)}$ is then given by the following general inclusion-exclusion formula:
\begin{align}\label{eq:excl}
C^{(p)}&=\sum_{n\in E^{(p)}} C_{\{n\}}-\sum_{n_1<n_2\in E^{(p)}}C_{\{n_1,n_2\}}+\sum_{n_1<n_2<n_3\in E^{(p)}}C_{\{n_1,n_2,n_3\}}-\ldots\nonumber\\
&=\sum_{k=1}^\infty(-1)^{k+1}\sum_{F\subset E^{(p)}\atop|F|=k}C_{F}.
\end{align}
Since the inclusions $J_n$'s have a diameter bounded by $2R$ and $\rho(B_{2R})$ is almost surely finite, the sum~\eqref{eq:excl} is locally finite almost surely. Recalling that by assumption~\eqref{eq:boundrho} the degree of the intersections of the inclusions is bounded by $\Gamma$, we deduce that we must have $C_F\equiv0$ for all $|F|>\Gamma$. Therefore, the inclusion-exclusion formula~\eqref{eq:excl} actually reads
\begin{align}\label{eq:excl2}
C^{(p)}=\sum_{k=1}^\Gamma(-1)^{k+1}\sum_{F\subset E^{(p)}\atop |F|=k}C_F.
\end{align}

We shall need further notation in the proofs. For all $E,F\subset\N$, $E\ne\varnothing$, we set $J_{E\| F}:=(\bigcap_{n\in E}J_n)\setminus (\bigcup_{n\in F}J_n)$ and $J^E_{\| F}:=(\bigcup_{n\in E}J_n)\setminus (\bigcup_{n\in F}J_n)$, and then
\[C_{E\|F}:=(A'-A)\mathds1_{J_{E\| F}}, \qquad\text{and}\qquad C^E_{\|F}:=(A'-A)\mathds1_{J^E_{\| F}}.\]
In particular, we have $C_{E\|\varnothing}=C_E$, $C^E_{\|\varnothing}=C^E$, and $C^\varnothing_{\|F}=0$. For simplicity of notation (except in the proof of Lemma~\ref{lem:eqsatif}), we also set $C_{\varnothing\|F}=0=C_\varnothing$.
The inclusion-exclusion formula then yields for all $G,H\subset \N$, $G\ne\varnothing$,
\begin{eqnarray}
C^{H}&=&\sum_{S\subset H}(-1)^{|S|+1}C_S,\label{eq:excl3.1}
\\
C^{H}_{\|G}&=&\sum_{S\subset H}(-1)^{|S|+1}C_{S\|G},\label{eq:excl3.2}
\\
C_{G\|H}&=&\sum_{S\subset H}(-1)^{|S|}C_{S\cup G}\label{eq:excl3.3}.
\end{eqnarray}

\medskip

We shall also use the symbols $\sim$, $\gtrsim$ and $\lesssim$ for $=$, $\ge$, $\le$ up to constants that only depend on $R$, $\Gamma$, $d$, and $\lambda$. Subscripts are used to indicate additional dependence of the constants, e.~g. $\lesssim_{\eta}$ means that the multiplicative constant depends on $\eta$, next to $R$, $\Gamma$, $d$, and $\lambda$. Throughout, we will denote by $C$ any positive constant with $C\sim1$, whose value may vary from line to line.

\subsection{Statements}
Our main result asserts the analyticity of the map $p\mapsto A^{(p)}_{\hom}$ corresponding to the perturbed coefficients.

\begin{theor}[Analyticity of the homogenized coefficients]\label{th:analytic}
Under the assumptions of Subsection~\ref{sec:assumptions}, the map $p\mapsto A^{(p)}_{\hom}$ is analytic on $[0,1]$ and there exists a constant $0<c\le 1$ such that, for all $p_0\in[0,1]$ and all $-p_0\wedge c\le p\le (1-p_0)\wedge c$,
\begin{align}\label{eq:analytic}
A^{(p_0+p)}_{\hom}= A_{\hom}^{(p_0)}+\sum_{j=1}^{\infty}\frac{p^j}{j!} A^{(p_0),j}_{\hom},
\end{align}
where the series converges, and where, for any $j\ge1$, $A^{(p_0),j}_{\hom}$ denotes the (well-defined) $j$-th derivative of the map $p\mapsto A^{(p)}_{\hom}$at $p_0$.\qed
\end{theor}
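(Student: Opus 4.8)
The plan is to reduce analyticity of $p\mapsto A_{\hom}^{(p)}$ to analyticity of the deterministic approximations $p\mapsto A_T^{(p)}$ together with a uniform-in-$T$ control of the Taylor coefficients, and then pass to the limit $T\uparrow\infty$. First I would fix a direction $\xi$ and work with the non-symmetric approximate coefficients $\xi\cdot A_T^{(p)}\xi=\E[\xi\cdot A^{(p)}(\nabla\phi_{T,\xi}^{(p)}+\xi)]$. Since $T$ is fixed here, everything is genuinely finite-dimensional in the Bernoulli variables once we localize: only the inclusions meeting a bounded region contribute to the value at a point, and $T^{-1}$ gives exponential spatial localization of $\phi_{T,\xi}^{(p)}$. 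I would write $A_T^{(p)}$ as an expectation over the $(b_n^{(p)})_n$ of a quantity depending on the configuration $E^{(p)}=\{n:b_n^{(p)}=1\}$, and expand in the Bernoulli parameter. Concretely, conditioning on $\Omega_1$ and using the product structure~\eqref{prod-struc}, $\E_2[f(E^{(p)})]$ is a polynomial-like expression in $p$ whose derivatives at $p_0$ are governed by the difference operators $\delta^F$ introduced above: one gets an absolutely convergent expansion
\begin{equation*}
A_T^{(p_0+p)}=A_T^{(p_0)}+\sum_{j\ge1}\frac{p^j}{j!}\,A_T^{(p_0),j},\qquad A_T^{(p_0),j}=\sum_{F:\,|F|=j}(\text{weight depending on }p_0)\;\E\big[\xi\cdot C_\bullet\,\nabla\delta_\xi^{F}\phi_T\big],
\end{equation*}
valid on a $p$-interval whose radius I must show is bounded below uniformly in $T$.

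The radius of convergence is where the new energy estimates of Proposition~\ref{prop:apimproved} enter, and this is the heart of the argument. The naive bound $|A_T^{(p_0),j}|\lesssim j!\,C^j$ coming from term-by-term $L^2$ estimates on $\nabla\delta_\xi^F\phi_T$ (each $\delta^{\{n\}}$ costing an $O(1)$ factor) would only give a radius $\sim 1$ for the formal series but with constants blowing up; the point is that the improved energy estimates, which combine physical and probability space, yield summable control of $\sum_F \|\nabla\delta^F\phi_T\|^2$-type quantities with constants independent of $T$ and of $p_0$, because the extra $\delta^{\{n\}}$'s force the inclusion $J_n$ to be ``active'' and one pays a gain each time. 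This produces a bound of the form $|A_T^{(p_0),j}|\le j!\,C^{j}\,c^{-j}$ for a fixed $0<c\le1$, hence convergence of the Taylor series of $A_T^{(p)}$ on $|p-p_0|\le c$ uniformly in $T$ and in $p_0\in[0,1]$. I would then check that each coefficient $A_T^{(p_0),j}$ converges as $T\uparrow\infty$: this follows from~\eqref{eq:reg-corr-grad}, the uniform energy estimates~\eqref{eq:modif-corr-estim}, and the analogous convergence for the difference-operator correctors $\nabla\delta_\xi^F\phi_T\to\nabla\delta_\xi^F\phi$ (which one establishes by the same homogenization/ergodicity input applied to the finitely many perturbed fields $A^{G}$, $G\subset F$). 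Call the limit $A_{\hom}^{(p_0),j}$.

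Finally I would assemble the limit. From~\eqref{eq:approxhomcoeff}, $A_T^{(p)}\to A_{\hom}^{(p)}$ pointwise in $p$; combined with the uniform-in-$T$ bound $|A_T^{(p_0),j}|\le j!\,C^j c^{-j}$ and dominated convergence on the series, one gets, for $-p_0\wedge c\le p\le(1-p_0)\wedge c$,
\begin{equation*}
A_{\hom}^{(p_0+p)}=\lim_{T\uparrow\infty}A_T^{(p_0+p)}=\lim_{T\uparrow\infty}\Big(A_T^{(p_0)}+\sum_{j\ge1}\frac{p^j}{j!}A_T^{(p_0),j}\Big)=A_{\hom}^{(p_0)}+\sum_{j\ge1}\frac{p^j}{j!}A_{\hom}^{(p_0),j},
\end{equation*}
with the series still convergent. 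A standard argument (uniform convergence of analytic functions on a disc implies the limit is analytic and its Taylor coefficients are the limits of the coefficients) then identifies $A_{\hom}^{(p_0),j}$ with the genuine $j$-th derivative of $p\mapsto A_{\hom}^{(p)}$ at $p_0$, and since $p_0\in[0,1]$ was arbitrary and $c$ is fixed, $p\mapsto A_{\hom}^{(p)}$ is analytic on all of $[0,1]$. One must also handle the symmetrization (passing from the non-symmetric $A_T^{(p)}$ to $A_{\hom}^{(p)}$ via the energy identity in~\eqref{eq:approxhomcoeff}) and the bookkeeping of the inclusion-exclusion formula~\eqref{eq:excl2} when inclusions overlap, so that the coefficient $C_\bullet$ attached to each $F$ is the correct combination $C_{F\|\cdot}$ rather than $C^{\{n\}}$; this is routine given the notation set up above. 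The main obstacle, clearly, is proving the uniform-in-$T$ geometric bound on the Taylor coefficients, i.e. establishing and then exploiting Proposition~\ref{prop:apimproved}: without the mixed physical/probability energy estimate, the combinatorial sum over $F$ of the corrector differences is not obviously summable with a $T$-independent constant, and the disjoint-inclusion case (where only $C^{1,1}$ follows from the classical estimates of Anantharaman--Le Bris) already shows that a genuinely new estimate is needed to reach analyticity.
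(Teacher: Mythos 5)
Your overall skeleton --- expand $\xi\cdot A_T^{(p)}\xi$ around $p_0$, bound the Taylor coefficients geometrically and uniformly in $T$ via the mixed physical/probability energy estimates of Proposition~\ref{prop:apimproved}, then let $T\uparrow\infty$ --- is indeed the paper's strategy, and you correctly single out the improved energy estimates as the heart of the matter. But three steps you dismiss as routine are genuine gaps. The most serious is uniformity in $p_0$. Differentiating the Bernoulli expectation at $p_0$ gives coefficients of the form $j!\sum_{|F|=j}\E\big[\delta^F\big(\xi\cdot A^{E^{(p_0)}\setminus F}(\nabla\phi_{T}^{E^{(p_0)}\setminus F}+\xi)\big)\big]$ (the argument of the limit in~\eqref{eq:formderdisj2}), where the base configuration $E^{(p_0)}\setminus F$ varies with $F$ and is coupled to the Bernoulli field; Proposition~\ref{prop:apimproved} controls differences based at a fixed configuration and does not apply to this object as stated. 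The paper does not extend the estimates to that setting: it reduces to $p_0=0$ by re-randomization (realizing $A^{(p_0+p)}$ in law as a Bernoulli perturbation of $A^{(p_0)}$ with parameter $p/(1-p_0)$), which by itself only yields $|\Delta_T^{(p_0),j}|\le C^j(1-p_0)^{-j}$, degenerating as $p_0\to1$; one must also run the mirrored negative perturbation (parameter $p/p_0$, ``perturbed'' medium $A$) and exploit the exact cancellation $\tilde\Delta_T^{(p_0),j}=(1-p_0)^j\Delta_T^{(p_0),j}$ to obtain a $p_0$-uniform bound. Your proposal simply asserts uniformity in $p_0$ without this (or any substitute) mechanism.

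Second, the limit $T\uparrow\infty$ of the coefficients: you propose to pass to the limit term by term via $\nabla\delta_\xi^F\phi_T\to\nabla\delta_\xi^F\phi$, but each coefficient is an infinite sum over $F$, and the uniform estimates control whole sums (e.g. $\E[\sum_{|F|=k}|\nabla\delta_\xi^F\phi_T|^2]\le C^k$), not a $T$-independent summable majorant, so dominated convergence in $F$ is not available. This is exactly why Corollary~\ref{cor:analytic} keeps $\lim_T$ (and $\lim_\theta$) \emph{outside} the sum, and only for $k=1$ under $\rho(Q)\le\theta_0$ passes it inside (formula~\eqref{eq:der1form}); the paper instead deduces the existence of $\lim_T\Delta_T^{(p_0),j}$ indirectly, by induction on $j$, from the convergence $A_T^{(p)}\to A_{\hom}^{(p)}$ combined with the $T$-uniform remainder bound of Corollary~\ref{cor:reslimT}. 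Third, Theorem~\ref{th:analytic} assumes only stationarity, ergodicity and~\eqref{eq:boundrho}, whereas all your fixed-$T$ manipulations (absolute convergence of the sums over $F$, justification of testing and exchanging sums and expectations) require $\E[\rho(Q)^s]<\infty$ for all $s$ as in Lemmas~\ref{lem:finiteT} and~\ref{lem:convfinT}; removing that assumption requires the hardcore approximation $\rho_\theta\uparrow\rho$ and a second limit $\theta\uparrow\infty$ intertwined with the induction, which your proposal omits entirely. Until these three points are supplied, the argument establishes at best the expansion for $p_0=0$, fixed $T$, and point processes with all moments finite.
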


Since our proof is constructive, we obtain formulas for the derivatives. These formulas involve two approximation arguments: the addition of a massive term $T^{-1}$ in the corrector equation to deal with integrability issues at large distances, and a hardcore approximation of the point process to deal with integrability issues at short distances.

\begin{cor}[Formulas for derivatives]\label{cor:analytic}
Let the assumptions of Subsection~\ref{sec:assumptions} prevail. We can construct a sequence $(\rho_\theta)_\theta$ of hardcore approximations of the stationary point process $\rho$ in the following sense: for any $\theta>0$, $\rho_\theta$ is an ergodic stationary point process on $\R^d$ such that $\rho_\theta\subset\rho$, $\rho_\theta(Q)\le\theta$ a.s., and  $\rho_\theta\uparrow\rho$ locally almost surely as $\theta\uparrow\infty$. For any $F,G\subset\N$, denote by $A_\theta^F,(C_\theta)_{F\|G},(C_\theta)_F$ the coefficients $A^F,C_{F\|G},C_F$ corresponding to $\rho_\theta$ in place of $\rho$, and further denote by $\phi_{T,\theta,\xi}^F$ the approximate corrector $\phi_{T,\xi}^F$ associated with the coefficients corresponding to $\rho_\theta$ in place of $\rho$.

Then, for all $k\ge1$ and all $p_0\in[0,1]$, the $k$-th derivative $A^{(p_0),k}_{\hom}$ at $p_0$ satisfies the following three equivalent formulas, for all $\xi$,
\begin{align}
\xi\cdot A^{(p_0),k}_{\hom}\xi=&~k!\lim_{T\uparrow\infty}\lim_{\theta\uparrow\infty}\sum_{|F|=k}\sum_{G\subsetneq F}(-1)^{|F\setminus G|+1}\E\left[\nabla\delta_\xi^G\phi_{T,\theta,\xi}^{E^{(p_0)}\setminus F}\cdot (C_\theta)_{F\setminus G\| G}(\nabla\phi_{T,\theta,\xi}^{E^{(p_0)}\cup F}+\xi)\right]\label{eq:formderdisj0}
\\
=&~k!\lim_{T\uparrow\infty}\lim_{\theta\uparrow\infty}\sum_{|F|=k}\sum_{G\subset F\atop G\ne\varnothing}(-1)^{|G|+1}\E\left[(\nabla\phi_{T,\theta,\xi}^{E^{(p_0)}\setminus F}+\xi)\cdot (C_\theta)_G\nabla\delta_\xi^{F\setminus G}\phi_{T,\theta,\xi}^{G\cup (E^{(p_0)}\setminus F)}\right]\label{eq:formderdisj1}\\
=&~k!\lim_{T\uparrow\infty}\lim_{\theta\uparrow\infty}\sum_{|F|=k}\E\bigg[\sum_{G\subset F}(-1)^{|F\setminus G|}\xi\cdot A_\theta^{G\cup (E^{(p_0)}\setminus F)}(\nabla\phi_{T,\theta,\xi}^{G\cup (E^{(p_0)}\setminus F)}+\xi)\bigg],\label{eq:formderdisj2}
\end{align}
where the limits exist and where the sums are absolutely convergent for any fixed $T,\theta<\infty$ (recall that $\sum_{|F|=k}$ stands for the sum running over all the $k$-uplets of distinct positive integers).

Moreover, in the case when the point process $\rho$ satisfies $\E[\rho(Q)^s]<\infty$ for all $s\ge1$,
then the limits in $\theta$ as well as all subscripts $\theta$ can be omitted in the above formulas~\eqref{eq:formderdisj0}--\eqref{eq:formderdisj2}. Finally, in the case $k=1$, and under the additional assumption that $\rho(Q)\le \theta_0$ a.s. for some fixed $\theta_0>0$, we can pass to the limit in $T$ inside the sum in~\eqref{eq:formderdisj1}: for all $p_0\in[0,1]$,
\begin{align}\label{eq:der1form}
\xi\cdot A^{(p_0),1}_{\hom}\xi=\sum_{n}\E\left[(\nabla\phi_\xi^{E^{(p_0)}\setminus \{n\}}+\xi)\cdot C^{\{n\}}(\nabla\phi_\xi^{E^{(p_0)}\cup\{n\}}+\xi)\right],
\end{align}
where the sum is still absolutely convergent.\qed
\end{cor}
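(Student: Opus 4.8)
The plan is to read this corollary off from the constructive proof of Theorem~\ref{th:analytic}. The backbone of that proof is a two-parameter family of deterministic approximations $A^{(p)}_{T,\theta}$ of $A^{(p)}_{\hom}$, where $\phi^{(p)}_{T,\theta,\xi}$ is the massive-term regularized corrector of~\eqref{eq:modif-corr} associated with the hardcore-truncated point process $\rho_\theta$ of the statement and $\xi\cdot A^{(p)}_{T,\theta}\xi:=\E[\xi\cdot A_\theta^{(p)}(\nabla\phi^{(p)}_{T,\theta,\xi}+\xi)]$. For fixed $\theta<\infty$ the bound $\rho_\theta(Q)\le\theta$ a.s.\ makes the coefficient field $A_\theta^{(p)}$ depend, on any bounded region, on only finitely many Bernoulli variables with controlled multiplicity, so that (using the uniform estimate~\eqref{eq:modif-corr-estim}) the map $p\mapsto A^{(p)}_{T,\theta}$ is analytic on $[0,1]$ and one may differentiate under the expectation. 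The core of the proof of Theorem~\ref{th:analytic} is precisely the statement that these Taylor expansions converge with remainder bounds that are \emph{uniform in $(T,\theta)$}, which forces the Taylor coefficients --- i.e.\ the $p$-derivatives --- to pass to the limit $\theta\uparrow\infty$, $T\uparrow\infty$; granting this, it remains to identify the derivatives of $A^{(p)}_{T,\theta}$ explicitly and to justify the passage to the limit.

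First I would compute $\partial^k_p A^{(p)}_{T,\theta}$ at fixed $(T,\theta)$, $\theta<\infty$. Differentiating under $\E$ and iterating the Russo/Leibniz-type identity $\partial_p\E[X^{(p)}]=\sum_n\E[\delta^{\{n\}}X^{(p)}]$ --- the rigorous counterpart of the heuristic cluster expansion~\eqref{eq:clusterphiT}, licit here thanks to the product structure~\eqref{prod-struc} --- yields $\partial^k_p(\xi\cdot A^{(p)}_{T,\theta}\xi)$ as a (combinatorially normalized) sum over $k$-tuples of distinct indices $F$ of $\E[\delta^F(\xi\cdot A_\theta^{\bullet}(\nabla\phi^\bullet_{T,\theta,\xi}+\xi))]$, which is exactly formula~\eqref{eq:formderdisj2} before the limits. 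The two other forms~\eqref{eq:formderdisj0}--\eqref{eq:formderdisj1} then follow by algebraic rearrangement: split $A_\theta^{G\cup H}=A+C_\theta^{G\cup H}$, convert between the $C^\bullet$'s and the $C_{\bullet\|\bullet}$'s via the inclusion--exclusion identities~\eqref{eq:excl3.1}--\eqref{eq:excl3.3}, use the Leibniz-type identity~\eqref{eq:defdeltaxi1} for $\delta^F$ acting on correctors, and use the corrector equations~\eqref{eq:modif-corr} together with the symmetry of $A$ (or the dual corrector, in the non-symmetric case) to transfer a gradient from one factor to the other. This bookkeeping is carried out in the auxiliary lemmas of Section~\ref{sec:proofs} (in particular Lemma~\ref{lem:eqsatif}).

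Next I would prove absolute convergence of the sums $\sum_{|F|=k}$ at fixed $(T,\theta)$, $\theta<\infty$, and pass to the limit. Each summand is estimated, by Cauchy--Schwarz and the support condition on $(C_\theta)_{F\setminus G\|G}$ (contained in a union of at most $\Gamma$ balls of radius $R$ sitting inside the inclusions indexed by $F$), by a product of local $L^2$ norms of the difference-of-correctors $\nabla\delta^G_\xi\phi^\bullet_{T,\theta,\xi}$ and of $\nabla\phi^\bullet_{T,\theta,\xi}+\xi$ on those balls; the improved energy estimates of Proposition~\ref{prop:apimproved} control the former by a quantity summable in the mutual distances of the inclusions indexed by $F$ --- a decay absent from the naive bound~\eqref{eq:modif-corr-estim} and precisely what renders $\sum_{|F|=k}$ convergent --- and summing against the at most $O(\theta^{|F|})$ uplets per unit volume gives a finite total. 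Since Proposition~\ref{prop:apimproved} is uniform in $\theta$ (and in $p_0$), the limit $\theta\uparrow\infty$ is handled by dominated convergence using $\rho_\theta\uparrow\rho$ and the continuity of the regularized correctors, and then $T\uparrow\infty$ using~\eqref{eq:reg-corr-grad}, yielding~\eqref{eq:formderdisj0}--\eqref{eq:formderdisj2}.

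Finally, the two refinements reuse the same estimates. When $\E[\rho(Q)^s]<\infty$ for all $s\ge1$, the moments of $\rho(B_{2R})$ suffice to run the absolute-convergence argument directly on the untruncated process $\rho$, so the hardcore regularization and the $\theta$-limit become unnecessary. For $k=1$ under $\rho(Q)\le\theta_0$ a.s., one must exchange $\lim_{T\uparrow\infty}$ with the sum $\sum_n$ in~\eqref{eq:formderdisj1}; thanks to Proposition~\ref{prop:apimproved} and the a.s.\ bound on $\rho(Q)$, the $n$-th summand is dominated uniformly in $T$ by a summable sequence, while each summand converges as $T\uparrow\infty$ by~\eqref{eq:reg-corr-grad}, whence~\eqref{eq:der1form} by dominated convergence. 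The main obstacle throughout is this uniform-in-$(T,\theta)$ summability of cluster-type series: the classical energy estimates~\eqref{eq:modif-corr-estim} do not suffice, and it is the new energy estimates combining physical and probability space (Proposition~\ref{prop:apimproved}) that provide the missing decay in the separation of the perturbed inclusions; a secondary, orthogonal difficulty is the purely combinatorial verification that the three formulas~\eqref{eq:formderdisj0}--\eqref{eq:formderdisj2} agree.
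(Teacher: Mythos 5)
Your overall plan --- read the corollary off the constructive proof, identify the $p$-derivatives of the regularized coefficients $A^{(p)}_{T,\theta}$ with the cluster sums, check absolute convergence of $\sum_{|F|=k}$ at fixed $(T,\theta)$, verify the equivalence of the three formulas by combinatorics and by testing the perturbed corrector equations, and treat the two refinements separately --- matches the architecture of the paper (Lemma~\ref{lem:decompdiffdisj}, Proposition~\ref{prop:apriori}, Lemma~\ref{lem:alter}, and Steps~1--5 of Section~\ref{chap:proofth1}); your iterated Russo-type differentiation is an acceptable variant of the paper's exact expansion of $\Delta_T^{(p)}$ in powers of $p$ obtained by inclusion-exclusion and independence. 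The genuine gap is in the step where you ``justify the passage to the limit''. Dominated convergence cannot handle $\theta\uparrow\infty$ under the general assumptions: without moment bounds on $\rho$, the candidate limiting sum (with $\rho$ in place of $\rho_\theta$) need not be absolutely convergent --- this is precisely why the statement keeps the $\theta$-regularization --- so there is no summable-in-$F$ dominating sequence uniform in $\theta$. The uniform estimates of Proposition~\ref{prop:apimproved} control $\sup_{T,\theta}$ of the \emph{sums}, not the sum of $\sup_{T,\theta}$ of the individual terms, and the fixed-$(T,\theta)$ absolute convergence actually comes from Lemmas~\ref{lem:finiteT}--\ref{lem:convfinT} (exponential localization from the massive term plus moments of $\rho_\theta$), with $T$- and $\theta$-dependent constants; Proposition~\ref{prop:apimproved} provides no ``decay in the mutual distances of the inclusions'', contrary to what you assert. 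Likewise, \eqref{eq:reg-corr-grad} gives strong convergence only of the stationary corrector gradient, not of the correctors perturbed at finitely many prescribed inclusions appearing in each summand, so even termwise convergence in $T$ is not available by the route you indicate.

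What the paper actually does --- and what your first paragraph gestures at before being replaced by the dominated-convergence claim --- is an indirect compactness-and-uniqueness argument: the uniform bounds $|\Delta^{(p_0),j}_{T,\theta}|\le C^j$ give subsequential limits, and the Taylor-remainder estimate \eqref{eq:approxok}, uniform in $(T,\theta)$, combined with $A^{(p)}_{T,\theta}\to A^{(p)}_T\to A^{(p)}_{\hom}$, identifies every subsequential limit with a difference-quotient limit of $A^{(p)}_T$ (respectively $A^{(p)}_{\hom}$), hence the limits in \eqref{eq:formderdisj0}--\eqref{eq:formderdisj2} exist and equal the derivatives; this mechanism, not convergence of the summands, is what proves the corollary. (A related point you leave untouched is the reduction from general $p_0$ to $p_0=0$: the paper's Step~3 uses re-randomized Bernoulli couplings in both directions to get bounds uniform on all of $[0,1]$, since the one-sided argument degenerates as $p_0\to1$.) The same issue recurs in your treatment of \eqref{eq:der1form}: the paper does not dominate the sum in $n$ uniformly in $T$; it uses stationarity and $\rho(Q)\le\theta_0$ to choose the enumeration so that only finitely many $n$ contribute to the localized averages, and then passes to the limit in each of these finitely many terms by pairing the \emph{strong} convergence $\nabla\phi_T\to\nabla\phi$ in $\Ld^2_\loc(\R^d;\Ld^2(\Omega))$ with the \emph{weak} convergence $\nabla\phi_T^{\{n\}}\rightharpoonup\nabla\phi^{\{n\}}$; a product of two merely weakly convergent factors, which is all \eqref{eq:reg-corr-grad} could give you here, does not pass to the limit.
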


Formula \eqref{eq:formderdisj2} is the rigorous version of the so-called \emph{cluster expansion formula} formally used by physicists (see \cite{Torquato-02}) as well as in \cite{Anantharaman-LeBris-11}: it compares the homogenized coefficients corresponding to the coefficients obtained with a finite number of perturbed inclusions. In particular, the $k$-th derivative of $A^{(p)}_{\hom}$ with respect to $p$ is obtained by considering $k$ perturbed inclusions. Note that these cluster expansion formulas can be rewritten as in~\eqref{eq:clusterphiT} using the difference operators defined in~\eqref{eq:def-diff}: for all $k\ge1$,
\begin{align*}
\xi\cdot A^{(p_0),k}_{\hom}\xi&=k!\lim_{T\uparrow\infty}\lim_{\theta\uparrow\infty}\sum_{|F|=k}\E\bigg[\delta^{F}\Big(\xi\cdot A_\theta^{E^{(p_0)}\setminus F}(\nabla\phi_{T,\theta,\xi}^{E^{(p_0)}\setminus F}+\xi)\Big)\bigg],
\end{align*}
where $\delta^F$ now acts on the random variable $\xi\cdot A_\theta^{E^{(p_0)}\setminus F}(0)(\nabla\phi_{T,\theta,\xi}^{E^{(p_0)}\setminus F}(0)+\xi)$.
For $k=1$, it essentially coincides with the formula obtained by Mourrat in~\cite{Mourrat-13}:
\[\xi\cdot A^{(p_0),1}_{\hom}\xi=\lim_{T\uparrow\infty}\lim_{\theta\uparrow\infty}\sum_n\E\left[\xi\cdot A_\theta^{E^{(p_0)}\cup\{n\}}(\nabla\phi_{T,\theta,\xi}^{E^{(p_0)}\cup\{n\}}+\xi)-\xi\cdot A_\theta^{E^{(p_0)}\setminus\{n\}}(\nabla\phi_{T,\theta,\xi}^{E^{(p_0)}\setminus\{n\}}+\xi)\right].\]

Also note that, in the particular case when the inclusions $J_n$'s are disjoint, formula~\eqref{eq:formderdisj1} takes the following simpler form: for all $p_0\in[0,1]$ and $k\ge1$,
\begin{align*}
\xi\cdot A_{\hom}^{(p_0),k}\xi\,=\,&~k!\lim_{T\uparrow\infty}\lim_{\theta\uparrow\infty}\sum_{|F|=k}\sum_{n\in F}\E\left[(\nabla\phi_{T,\theta,\xi}^{E^{(p_0)}\setminus F}+\xi)\cdot (C_\theta)^{\{n\}}\nabla\delta^{F\setminus\{n\}}_\xi\phi_{T,\theta,\xi}^{\{n\}\cup (E^{(p_0)}\setminus F)}\right],
\end{align*}
which further reduces, under the additional assumption that $\E[\rho(Q)^s]<\infty$ for all $s\ge1$, to
\begin{align*}
\xi\cdot A_{\hom}^{(p_0),k}\xi\,=\,&~k!\lim_{T\uparrow\infty}\sum_{|F|=k}\sum_{n\in F}\E\left[(\nabla\phi_{T,\xi}^{E^{(p_0)}\setminus F}+\xi)\cdot C^{\{n\}}\nabla\delta^{F\setminus\{n\}}_\xi\phi_{T,\xi}^{\{n\}\cup (E^{(p_0)}\setminus F)}\right].
\end{align*}

\medskip

As a direct consequence of Theorem~\ref{th:analytic} we obtain the following universality principle, well-known by physicists: at first order in the volume fraction of the perturbation, the perturbed homogenized coefficient does not depend on the underlying point process $\rho$. More precisely,
\begin{cor}[First-order universality principle]\label{cor:invpr}
On top of the assumptions of Subsection~\ref{sec:assumptions},  assume that $\E[\rho(Q)^2]<\infty$. Then, we may define the volume fraction of the perturbation by the limit
\begin{align}\label{eq:volfrac}
v_p:=\lim_{L\uparrow\infty}\frac{\E\left[|LQ\cap\bigcup_{n\in E^{(p)}}J_n|\right]}{L^d},
\end{align}
and there exists some matrix $K$ such that for all $p\ge0$,
\[A^{(p)}_{\hom}=A_{\hom}^{(0)}+Kv_p+O(v_p^2).\]
If the point process $\rho$ is independent of $A$, of $A'$ (or else of $(A_n')_n$ in the particular example~\eqref{eq:reformA'A_n'}) and of the random volumes $|J_n^\circ|$'s, then the constant $K$ does not depend on the choice of the underlying point process $\rho$.
\qed
\end{cor}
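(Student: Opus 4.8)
\textbf{Proof proposal for Corollary~\ref{cor:invpr}.} The plan is to combine the analyticity statement of Theorem~\ref{th:analytic} (together with the first-derivative formula~\eqref{eq:der1form} from Corollary~\ref{cor:analytic} at $p_0=0$) with an elementary computation of the volume fraction $v_p$, and then to argue that the linear coefficient in $p$ of $A_{\hom}^{(p)}$ and the linear coefficient in $p$ of $v_p$ are \emph{proportional}, with a proportionality matrix $K$ that is independent of the point process under the stated independence hypothesis. First I would establish that $v_p$ is well-defined and compute it: by stationarity of the inclusion process and a standard ergodic/Fubini argument, $\E[|LQ\cap\bigcup_{n\in E^{(p)}}J_n|]/L^d\to \E[|\{x\in Q: x\in J_n \text{ for some }n\in E^{(p)}\}|]$, and by inclusion–exclusion together with the bound~\eqref{eq:boundrho} on the degree of intersections, $v_p=\sum_{k=1}^\Gamma(-1)^{k+1}p^k\,\E[\#\{F\subset\N:|F|=k,\ 0\in J_F\}]/k!$ (the $p^k$ arising because each fixed $k$-tuple contributes only when all its Bernoulli variables equal $1$, an event of probability $p^k$ by independence). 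In particular $v_p=\nu\,p+O(p^2)$ with $\nu:=\E[\#\{n:0\in J_n\}]=\E[\sum_n|J_n^\circ|\,\mathds1_{q_n\in Q}]=\E[\rho(Q)]\cdot\E[|J_1^\circ|]$ when $\rho$ is independent of the shapes — here $\nu$ depends on $\rho$, but only through the single scalar $\E[\rho(Q)]$ (which equals the intensity, hence is $1$ in the normalized setting, or is otherwise a pure normalization).

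The second step is to analyze the first derivative $A_{\hom}^{(0),1}$. By~\eqref{eq:der1form} with $p_0=0$ we have $E^{(0)}=\varnothing$ a.s., so
\[
\xi\cdot A_{\hom}^{(0),1}\xi=\sum_n\E\left[(\nabla\phi_\xi^{\varnothing\setminus\{n\}}+\xi)\cdot C^{\{n\}}(\nabla\phi_\xi^{\{n\}}+\xi)\right]=\sum_n\E\left[(\nabla\phi_\xi+\xi)\cdot C^{\{n\}}(\nabla\phi_\xi^{\{n\}}+\xi)\right],
\]
where $\phi_\xi$ is the corrector for the \emph{unperturbed} field $A$ (which does not see the Bernoulli variables at all), and $\phi_\xi^{\{n\}}$ is the corrector for $A$ perturbed on the single inclusion $J_n$. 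By stationarity, each summand depends on $n$ only through the law of $(\rho-q_n,J_n-q_n,A(\cdot+q_n),A'(\cdot+q_n))$ shifted so that $q_n$ sits at a given location, and summing over $n$ amounts, via the stationarity/Campbell-type identity for the point process, to an average over the Palm measure of $\rho$ times the "single inclusion" energy defect. Here the key structural point is that in the limit $p\to0$ the background is the fixed, point-process-independent medium $A$ (recall $A$ depends on $\rho$ only locally, but at $p=0$ no inclusion is switched on, so $A_{\hom}^{(0)}$ and $\phi_\xi$ are unaffected by whether we even have a point process); the only role of $\rho$ is to \emph{place} the isolated perturbed inclusions. Under the independence assumption ($\rho$ independent of $A,A',|J_n^\circ|$), the per-inclusion contribution factorizes into (a) an expectation over the location/shape/coefficient data of a single inclusion, which is a fixed matrix-valued functional $\xi\mapsto\xi\cdot K\xi$ not involving the law of $\rho$ beyond the placement, and (b) the \emph{intensity} of $\rho$, which by normalization is the same scalar for every admissible $\rho$. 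Thus $A_{\hom}^{(0),1}=K\cdot(\text{intensity})=K\nu/\E[|J_1^\circ|]$ — more precisely $A_{\hom}^{(0),1}$ and $\nu$ are the same scalar multiple (the intensity) of point-process-independent quantities, so $A_{\hom}^{(0),1}=K'\nu$ for a $\rho$-independent $K'$, and we rename $K:=K'$.

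The third step is simply to substitute the Taylor expansions: by Theorem~\ref{th:analytic}, $A_{\hom}^{(p)}=A_{\hom}^{(0)}+p\,A_{\hom}^{(0),1}+O(p^2)$, and by the above $p\,A_{\hom}^{(0),1}=K(\nu p)=K v_p+K(v_p-\nu p)=Kv_p+O(p^2)$ since $v_p-\nu p=O(p^2)$; also $p^2\lesssim v_p^2$ because $v_p\sim \nu p$ with $\nu>0$ (assuming the perturbation is genuine, i.e. $\nu\ne0$; the degenerate case $\nu=0$ forces $v_p\equiv0$ and the statement is vacuous with $K$ arbitrary). Combining, $A_{\hom}^{(p)}=A_{\hom}^{(0)}+Kv_p+O(v_p^2)$, which is the claim. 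The main obstacle is the rigorous justification of the factorization in step two — namely that $\sum_n\E[(\nabla\phi_\xi+\xi)\cdot C^{\{n\}}(\nabla\phi_\xi^{\{n\}}+\xi)]$ equals (intensity of $\rho$) times a functional depending only on the law of a single inclusion and the fixed medium $A$, uniformly in the choice of $\rho$. This requires: (i) the Campbell/Palm-calculus identity turning the sum over $n$ into an integral against the intensity measure of $\rho$; (ii) checking that, once the independence of $\rho$ from $(A,A',|J_n^\circ|)$ is invoked, the integrand — the energy defect produced by a single isolated perturbed inclusion in the background $A$ — does not depend on the \emph{correlation structure} of $\rho$, only on the law of one inclusion's shape and coefficient; and (iii) controlling absolute convergence of the sum, which is already granted by Corollary~\ref{cor:analytic}. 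Step (ii) is where one must be slightly careful: $\phi_\xi^{\{n\}}$ a priori depends on the full configuration through $A,A'$, but since those are \emph{unperturbed} outside $J_n$ and $A$ itself has only local dependence on $\rho$, the defect is a local quantity around $q_n$, and the needed independence makes its expectation a pure function of the single-inclusion law; I would make this precise by writing the defect as a convergent Dirichlet-energy correction supported near $J_n$ and invoking dominated convergence together with the stationarity of $A$.
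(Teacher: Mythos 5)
Your proposal is correct and follows essentially the same route as the paper: expand $v_p=\gamma p+O(p^2)$ by inclusion--exclusion, identify the linear term of $A^{(p)}_{\hom}$ via Theorem~\ref{th:analytic} and formula~\eqref{eq:der1form} at $p_0=0$, and then use stationarity plus the independence of $\rho$ from $(A,A',|J_n^\circ|)$ to write $\sum_n\E[(\nabla\phi+\xi)\cdot C^{\{n\}}(\nabla\phi^{\{n\}}+\xi)]$ as the intensity $\sigma$ times an integral of a single-inclusion quantity, so that $\sigma$ cancels against $\gamma=\sigma\E[|J_0^\circ|]$. The only differences are cosmetic: the paper truncates the inclusion--exclusion with a two-sided bound (using $\E[\rho(Q)^2]<\infty$) rather than your exact $\Gamma$-term expansion, and it implements your Campbell/Palm factorization by writing the conditional expectation given $\rho$ as $f(q_n)$ and using $\E[\rho]=\sigma\,dx$.
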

Since the formulas given by Corollary~\ref{cor:analytic} for the $k$-th derivative $A^{(0),k}_{\hom}$ of $A^{(p)}_{\hom}$ at $0$ involve terms of the form $\E[\sum_{n_1,\ldots,n_k}f(q_{n_1},\ldots,q_{n_k})]$, they depend on moments of $\rho$ up to order $k$, so that stronger dependence on the point process $\rho$ is expected for higher-order terms (see indeed \cite[p.~493--494]{Torquato-02}).

Formula~\eqref{eq:der1form} for the first derivative has the  advantage of being exact (there is no limit left in $T$), and, at $p_0=0$, it is given by the solution of the corrector equation corresponding to a single inclusion. In particular, this makes explicit calculations possible for spherical inclusions, and allows us to    prove the celebrated Clausius-Mossotti formula in a very general context. 

\begin{cor}[Electric Clausius-Mossotti formula]\label{cor:cm}
On top of the assumptions of Subsection~\ref{sec:assumptions}, assume that the inclusions are spherical, i.e. $J_n=B_R(q_n)$, and that both the unperturbed and perturbed coefficients are constant and isotropic: $A=\alpha\Id$ and $A'=\beta\Id$. 
Denoting by $v_p$ the volume fraction~\eqref{eq:volfrac} of the perturbation, we then have, for all $p\ge0$,
\[A^{(p)}_{\hom}=\alpha\Id+\frac{\alpha d(\beta-\alpha)}{\beta+\alpha(d-1)}\Id v_p+O(v_p^2).\]\qed
\end{cor}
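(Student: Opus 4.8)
The plan is to derive Corollary~\ref{cor:cm} as an essentially explicit computation built on top of formula~\eqref{eq:der1form} at $p_0=0$ and the first-order universality principle of Corollary~\ref{cor:invpr}. First I would invoke Corollary~\ref{cor:invpr}: since $A=\alpha\Id$ and $A'=\beta\Id$ are constant (hence independent of $\rho$) and the inclusions are deterministic balls $B_R$ (so the random volumes $|J_n^\circ|$ are constant as well), the hypotheses of Corollary~\ref{cor:invpr} are met, and the first-order coefficient $K$ is independent of the point process. Consequently it suffices to compute $K$ — equivalently $\xi\cdot A_{\hom}^{(0),1}\xi$ and $v_p$ — in the most convenient admissible model, namely one where $\rho(Q)\le\theta_0$ a.s. so that the clean formula~\eqref{eq:der1form} applies and, better yet, where the inclusions are pairwise disjoint; for instance a hardcore point process of intensity small enough that balls of radius $R$ never overlap.

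In that disjoint setting, \eqref{eq:der1form} at $p_0=0$ reads $\xi\cdot A_{\hom}^{(0),1}\xi=\sum_n\E[(\nabla\phi_\xi^{\varnothing\setminus\{n\}}+\xi)\cdot C^{\{n\}}(\nabla\phi_\xi^{\{n\}}+\xi)]$. But for $p_0=0$ we have $E^{(0)}=\varnothing$, so $\phi_\xi^{\varnothing}$ is the corrector for the homogeneous medium $\alpha\Id$, which is identically zero; hence the left factor is simply $\xi$, and $C^{\{n\}}=(\beta-\alpha)\Id\mathds1_{B_R(q_n)}$. Thus $\xi\cdot A_{\hom}^{(0),1}\xi=(\beta-\alpha)\sum_n\E[\mathds1_{B_R(q_n)}\,\xi\cdot(\nabla\phi_\xi^{\{n\}}+\xi)]$, where $\phi_\xi^{\{n\}}$ solves $-\nabla\cdot A^{\{n\}}(\nabla\phi_\xi^{\{n\}}+\xi)=0$ with a single inclusion of conductivity $\beta$ embedded in the background $\alpha$. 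By stationarity this sum reduces to the single-inclusion problem centered at the origin with the appropriate intensity normalization; concretely $\sum_n\E[\mathds1_{B_R(q_n)}f(x-q_n)]=\E[\rho(Q)]\,|B_R|^{-1}\!\cdot(\text{something})$ — more precisely, using the Campbell/Mecke formula, it becomes $\lambda\int_{B_R}\xi\cdot(\nabla\phi_\xi^{\mathrm{single}}(y)+\xi)\,dy$ where $\lambda=\E[\rho(Q)]$ is the intensity and $\phi_\xi^{\mathrm{single}}$ is the whole-space single-ball corrector. Matching this against $v_p$: since the balls are disjoint, $v_p=p\lambda|B_R|$, so $\lambda|B_R|$ is exactly the coefficient converting $p$ into volume fraction, and one reads off $K\xi=(\beta-\alpha)|B_R|^{-1}\int_{B_R}(\nabla\phi_\xi^{\mathrm{single}}(y)+\xi)\,dy$.

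The remaining ingredient is the classical explicit solution of the single-inclusion transmission problem: for a ball $B_R$ of constant conductivity $\beta$ in an infinite medium of conductivity $\alpha$ subject to the field $\xi$ at infinity, the field inside the ball is uniform and equal to $\frac{\alpha d}{\beta+\alpha(d-1)}\,\xi$ (this is the standard dielectric-sphere / inclusion formula, obtainable by separation of variables in spherical harmonics, matching Dirichlet and Neumann data on $\partial B_R$, with only the $\ell=1$ mode present by linearity in $\xi$). Hence $|B_R|^{-1}\int_{B_R}(\nabla\phi_\xi^{\mathrm{single}}+\xi)=\frac{\alpha d}{\beta+\alpha(d-1)}\xi$, giving $K=(\beta-\alpha)\frac{\alpha d}{\beta+\alpha(d-1)}\Id$, i.e. $K=\frac{\alpha d(\beta-\alpha)}{\beta+\alpha(d-1)}\Id$. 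Plugging into Corollary~\ref{cor:invpr}, $A_{\hom}^{(p)}=\alpha\Id+\frac{\alpha d(\beta-\alpha)}{\beta+\alpha(d-1)}\Id\,v_p+O(v_p^2)$, as claimed. The main obstacle — really the only substantive point beyond bookkeeping — is justifying rigorously that the single-inclusion corrector on $\R^d$ exists, has the stated uniform interior field, and that the Campbell-formula manipulation legitimately reduces the sum $\sum_n\E[\cdots]$ in~\eqref{eq:der1form} to this $\R^d$ single-ball quantity with the intensity prefactor; once the disjointness reduction has been made via Corollary~\ref{cor:invpr}, these are standard but must be stated carefully, paying attention to the decay of $\nabla\phi_\xi^{\mathrm{single}}$ at infinity (it decays like $|x|^{-d}$, which is exactly integrable enough against the compactly supported $\mathds1_{B_R}$ to make everything finite).
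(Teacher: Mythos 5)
Your proposal is correct and, at its core, follows the same route as the paper's proof: formula~\eqref{eq:der1form} at $p_0=0$ with $\nabla\phi_\xi=0$, the explicit single-sphere transmission solution whose interior field is the uniform $\frac{\alpha d}{\beta+\alpha(d-1)}\xi$, the Campbell-type identity $\E[\sum_n f(q_n)]=\sigma\int f$, and $v_p=p\sigma|B_R|$ to first order. The only deviation is your preliminary detour through Corollary~\ref{cor:invpr} and a disjoint hardcore model, which is harmless but unnecessary: since the inclusions are balls of fixed radius, assumption~\eqref{eq:boundrho} already forces $\rho(Q)\le\theta_0$ a.s., so \eqref{eq:der1form} applies directly to the given process, and the first-order formula only involves single-inclusion correctors, so possible overlaps play no role.
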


As pointed out in the introduction, all our results also hold for linear elasticity.
This allows us to give the first rigorous proof of the elastic Clausius-Mossotti formula for random inclusions. Recall that an isotropic stiffness tensor $A$ has the form $\frac12\xi: A:\xi=G|\xi|^2+\frac\lambda2(\Tr \xi)^2$, where $G$ and $\lambda$ are the Lamé coefficients, to which we associate the bulk modulus $K=\lambda+2G/d$ and shear modulus $G$.

\begin{cor}[Elastic Clausius-Mossotti formula]\label{cor:cm2}
On top of the assumptions of Subsection~\ref{sec:assumptions}, assume that the inclusions are spherical, i.e. $J_n=B_R(q_n)$, and that both the unperturbed and perturbed stiffness tensors $A$ and $A'$ are constant and isotropic, and denote by $K,G>0$ and $K',G'>0$ their respective bulk and shear moduli. 
Let $A^{(1)}$ be the stiffness matrix of an isotropic medium of bulk modulus $K_1=K+(K'-K)\frac{K+\beta}{K'+\beta}$ and shear modulus $G_1=G+(G'-G)\frac{G+\alpha}{G'+\alpha}$, where we have set 
\begin{align}\label{eq:defab}
\alpha=G\frac{d^2K+2(d+1)(d-2)G}{2d(K+2G)},\qquad\beta=2G\frac{d-1}d.
\end{align}
Denoting by $v_p$ the volume fraction~\eqref{eq:volfrac} of the perturbation, we then have, for all $p\ge0$,
$$
A^{(p)}_{\hom}\,=\,A+A^{(1)}v_p+O(v_p^2).
$$
\qed
\end{cor}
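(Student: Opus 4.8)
\textbf{Proof plan for the elastic Clausius-Mossotti formula (Corollary~\ref{cor:cm2}).} The plan is to reduce the elastic case to an explicit single-inclusion computation, exactly as in the electric case (Corollary~\ref{cor:cm}), but now using the tensorial structure of linear elasticity. The starting point is the exact first-order formula~\eqref{eq:der1form} at $p_0=0$, which expresses $A^{(0),1}_{\hom}$ in terms of the corrector associated with a \emph{single} perturbed spherical inclusion $B_R$ in the homogeneous isotropic reference medium $A$. Since at $p_0=0$ the set $E^{(0)}$ is empty, the relevant correctors are $\phi^{\varnothing}_\xi$ (trivial, with $\nabla\phi^{\varnothing}_\xi+\xi=\xi$) and $\phi^{\{n\}}_\xi$, the corrector for the medium $A+(A'-A)\mathds1_{B_R(q_n)}$; by stationarity and translation invariance the contribution of each $q_n$ is identical up to the shift, and summing over $n$ produces the intensity factor and hence the volume fraction $v_p$ (here using $\E[\rho(Q)^2]<\infty$, which holds since the inclusions are disjoint balls of fixed radius so $\rho(Q)$ is bounded). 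Thus the whole corollary boils down to: solve the cell problem for one isotropic elastic ball embedded in an isotropic elastic matrix, and read off the effective increment.

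Next I would carry out that single-inclusion elasticity computation. One embeds a ball $B_R$ of isotropic stiffness $A'$ (moduli $K',G'$) in an infinite isotropic medium of stiffness $A$ (moduli $K,G$), imposes the affine boundary condition at infinity corresponding to a constant strain $\xi$, and solves the Lamé/Navier equations. This is the classical Eshelby single-inclusion problem: for a spherical inclusion the interior strain field is \emph{uniform}, and the exterior field is an explicit multipole expansion. Decomposing $\xi$ into its hydrostatic part (trace) and its deviatoric (traceless) part, the hydrostatic response is governed by the bulk moduli $K,K'$ and the deviatoric response by the shear moduli $G,G'$; the two channels decouple. Plugging the resulting interior strain and the jump $(A'-A)\mathds1_{B_R}$ into the bilinear form $\E[(\nabla\phi^{\varnothing}_\xi+\xi)\cdot C^{\{n\}}(\nabla\phi^{\{n\}}_\xi+\xi)]$ of~\eqref{eq:der1form}, and using that the relevant test field $\nabla\phi^{\varnothing}_\xi+\xi = \xi$ is constant, one obtains $\xi\cdot K\xi$ as a weighted combination of $|\mathrm{dev}\,\xi|^2$ and $(\Tr\xi)^2$ with explicit coefficients. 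These coefficients, after the dimensional bookkeeping of the $d$-dimensional Eshelby tensor, are precisely $K_1-K = (K'-K)\frac{K+\beta}{K'+\beta}$ and $G_1-G=(G'-G)\frac{G+\alpha}{G'+\alpha}$ with $\alpha,\beta$ as in~\eqref{eq:defab}; this identifies $K$ in~\eqref{eq:der1form} with the isotropic tensor $A^{(1)}$. Finally, Theorem~\ref{th:analytic} (analyticity, hence a convergent Taylor expansion with controlled remainder) upgrades the first-order identity to $A^{(p)}_{\hom}=A+A^{(1)}v_p+O(v_p^2)$ on $p\ge0$, using also the first-order universality principle (Corollary~\ref{cor:invpr}) to legitimately phrase the expansion in terms of $v_p$ rather than $p$.

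\textbf{Main obstacle.} The conceptual reduction is entirely parallel to the electric case; the real work, and the place where I expect the most care to be needed, is the explicit resolution of the $d$-dimensional isotropic single-inclusion elasticity problem and the correct dimension-dependent algebra that yields the constants $\alpha$ and $\beta$ of~\eqref{eq:defab}. Unlike the scalar (conductivity) problem, the elastic cell problem couples the radial and angular displacement components, requires the vector spherical harmonic / Papkovich--Neuber representation, and the matching conditions at $|x|=R$ (continuity of displacement and of traction $A\nabla u\cdot \hat n$) must be solved as a linear system whose solution must then be re-expressed through the bulk and shear moduli in general dimension $d$; getting the factor $\frac{d^2K+2(d+1)(d-2)G}{2d(K+2G)}$ right is the delicate point. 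A secondary technical point is justifying that all manipulations (interchanging the sum over $n$ with the expectation, passing from $p$ to $v_p$) are licit: this is handled by the absolute convergence asserted in~\eqref{eq:der1form} together with $\rho(Q)$ being bounded (disjoint fixed-radius balls), so it is routine but should be stated. Once the single-inclusion strain is in hand, substitution into~\eqref{eq:der1form} and comparison with the definition of $A^{(1)}$ is a direct computation.
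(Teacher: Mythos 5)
Your proposal is correct and follows essentially the same route as the paper: the paper also applies the exact first-derivative formula~\eqref{eq:der1form} at $p_0=0$, uses the uniform interior strain of the single isotropic spherical inclusion (citing the explicit solution, e.g.\ Torquato's equation~(17.84), rather than re-deriving it via Papkovich--Neuber as you anticipate), splits into hydrostatic and deviatoric channels to identify $K_1$ and $G_1$ with the constants $\alpha,\beta$ of~\eqref{eq:defab}, and concludes with $v_p=p\sigma|B_R|$ and Theorem~\ref{th:analytic}. One minor correction: the boundedness of $\rho(Q)$ needed to invoke~\eqref{eq:der1form} comes from assumption~\eqref{eq:boundrho} combined with the fixed radius $R$ of the balls, not from disjointness of the inclusions (which is not assumed).
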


Corollaries~\ref{cor:cm} and~\ref{cor:cm2} treat spherical inclusions, in which case the solution of the corrector equation with a single inclusion can be calculated explicitly, so that~\eqref{eq:der1form} can be turned into an explicit formula. In the case of ellipsoidal inclusions, explicit calculations can also be made in terms of the so-called depolarization coefficients in the electric case (see~\cite{Stratton-41}), or in terms of the Eshelby tensor in the elastic case (see~\cite{Eshelby-57}, and also~\cite{Mura-91} for more precise analytic computations), so that an explicit formula for the first derivative $A^{(0),1}_{\hom}$ can also be derived. The comparison of these results for spherical and ellipsoidal inclusions illustrates the fact that the first derivative already heavily depends on the geometry of the microstructure (see e.g. \cite[Section~19.1.2]{Torquato-02}).

An explicit formula could in principle also be obtained for the second derivative at $p_0=0$ for spherical inclusions, since the corrector equation for two disjoint spheres can be solved analytically as well (see~\cite{Ross-68} and~\cite[Section~5]{Jeffrey-73}). 

\medskip

Formulas~\eqref{eq:formderdisj0}, \eqref{eq:formderdisj1} and~\eqref{eq:formderdisj2} for the derivatives as given by Theorem~\ref{th:analytic} are expressed as limits in terms of the approximate corrector gradient. For practical purposes, it may be important to prove rates of convergence for these limits. This is a quantitative ergodic result and therefore requires quantitative assumptions. 
In what follows we assume that a quantitative convergence result is available for the convergence of $A_T^{(p)}:=\E[\xi\cdot A^{(p)}(\nabla \phi_T^{(p)}+\xi)]$ to $A_{\hom}^{(p)}$ (through the convergence of $\nabla \phi_T^{(p)}$ to $\nabla \phi^{(p)}$) and show how this rate is inherited by their derivatives with respect to $p$. 
\begin{cor}\label{cor:rates}
On top of the assumptions of Subsection~\ref{sec:assumptions}, assume that $\E[\rho(Q)^s]<\infty$ for all $s\ge1$, and further assume that there exists a function $\gamma$ such that, for all $T>0$ and $p\in [0,1]$,
\begin{align}\label{eq:quantas}
\E[|\nabla(\phi_T^{(p)}-\phi_{2T}^{(p)})|^2]\lesssim\gamma(T)^2.
\end{align}
Let $p\in[0,1]$ be fixed. Recall the formulas for the approximate derivatives of $A^{(p)}_{\hom}$: for all $k\ge0$,
\begin{align}
\xi\cdot A^{(p),k}_T\xi:=
&~k!\sum_{|F|=k}\sum_{G\subset F}(-1)^{|F\setminus G|}\E\left[\xi\cdot A^{G\cup E^{(p_0)}\setminus F}(\nabla\phi_{T,\xi}^{G\cup E^{(p_0)}\setminus F}+\xi)\right].\label{eq:nonsymapprox}
\end{align}
Then, there is a constant $C\sim1$ such that, for all $k\ge0$, we have
\[\left| A^{(p),k}_{T}- A^{(p),k}_{2T}\right|\le k!C^k\gamma(T)^{2^{-k}}.\]
In particular, if $\gamma(T)\lesssim T^{-\alpha}$ for some $\alpha>0$, this yields for some constant $C\sim_\alpha1$,
\[\left| A^{(p),k}_{T}- A^{(p),k}_{\hom}\right|\le k!C^kT^{-2^{-k}\alpha}.\]
\qed
\end{cor}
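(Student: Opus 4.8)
The plan is to prove the telescoping estimate $|A^{(p),k}_T - A^{(p),k}_{2T}| \le k!\,C^k \gamma(T)^{2^{-k}}$ by induction on $k$, using the explicit formula~\eqref{eq:nonsymapprox} together with the product structure~\eqref{prod-struc} and the energy estimates~\eqref{eq:modif-corr-estim}. The case $k=0$ is exactly the hypothesis~\eqref{eq:quantas} (modulo the elementary observation that $\E[|\nabla(\phi_T^{(p)}-\phi_{2T}^{(p)})|^2]^{1/2}$ controls $|A_T^{(p)} - A_{2T}^{(p)}|$ by Cauchy--Schwarz and the uniform bound $\E[|\nabla\phi_T^{(p)}+\xi|^2]\lesssim1$; I would also need the uniform-in-$p$ version of the sum-convergence from Corollary~\ref{cor:analytic}, valid since $\E[\rho(Q)^s]<\infty$ for all $s$). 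The key structural point, already visible in the difference-operator reformulation $\xi\cdot A^{(p),k}_T\xi = k!\sum_{|F|=k}\E[\delta^F(\xi\cdot A^{E^{(p)}\setminus F}(\nabla\phi_{T}^{E^{(p)}\setminus F}+\xi))]$, is that each summand is a $k$-fold difference, hence carries a strong localization/decay in the mutual distances of the inclusions in $F$; this is what makes the sum over $k$-uplets absolutely convergent with a constant of the form $C^k$ (this is precisely the content of the improved energy estimates, Proposition~\ref{prop:apimproved}, which I would invoke as a black box in the form stated earlier in the paper).

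First I would fix $T$ and write $A^{(p),k}_T - A^{(p),k}_{2T}$ as $k!\sum_{|F|=k}\E\big[\delta^F\big(g_T^{F} - g_{2T}^{F}\big)\big]$, where $g_S^F$ denotes the random variable $\xi\cdot A^{E^{(p)}\setminus F}(0)(\nabla\phi_{S,\xi}^{E^{(p)}\setminus F}(0)+\xi)$. The difference $g_T^F - g_{2T}^F$ involves $\nabla\phi_T^{E^{(p)}\setminus F} - \nabla\phi_{2T}^{E^{(p)}\setminus F}$, which is small in $L^2$ by~\eqref{eq:quantas} (it is the same corrector, just with a different subset of active inclusions $E^{(p)}\setminus F$; the estimate~\eqref{eq:quantas} is assumed uniform over configurations, which one gets because $A^{E^{(p)}\setminus F}$ is again an admissible ergodic stationary field). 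The point is then to interpolate: the quantity $\delta^F(g_T^F - g_{2T}^F)$ is simultaneously (a) bounded by $\gamma(T)$ in an averaged sense, from the smallness of the $T$-difference, and (b) decaying like a $k$-fold difference in the inclusion distances, from the improved energy estimates. Taking a geometric mean — the $2^{-k}$ power is the natural exponent that survives after $k$ successive interpolations, one per application of a difference operator $\delta^{\{n\}}$ — yields a summable bound whose sum over $|F|=k$ is $\lesssim C^k \gamma(T)^{2^{-k}}$. The combinatorial bookkeeping (the sum over $k$-uplets, the $k!$, the constant $C^k$) is exactly parallel to the analyticity proof of Theorem~\ref{th:analytic} and I would reuse those estimates verbatim, only tracking the extra factor $\gamma(T)^{2^{-k}}$ through the argument.

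The main obstacle is step (a)--(b) above: making the interpolation between "small in $T$" and "decaying in space" quantitative with the correct loss of exponent. Concretely, one must show that a quantity controlled by $\gamma(T)$ globally but which a priori has no decay, and a quantity with fast decay in the inclusion distances but size $O(1)$, can be combined so that each of the $k$ difference operators contributes a factor $\gamma(T)^{1/2}$ rather than $\gamma(T)$ or $1$; this is where the square-root in $2^{-k}$ comes from, and it must be harvested by splitting each $\delta^{\{n_i\}}$-difference via Cauchy--Schwarz against one "small" factor and one "decaying" factor. The second, more routine, difficulty is checking that~\eqref{eq:quantas}, assumed for the field $A^{(p)}$, transfers with the same rate $\gamma$ to all the restricted fields $A^{E^{(p)}\setminus F}$ and to the perturbed-inclusion fields appearing after applying $\delta^F$; since each of these is again a $\M_\lambda$-valued ergodic stationary field built from the same ingredients, and the constants in~\eqref{eq:modif-corr-estim} are uniform, this should follow, but it requires stating the hypothesis in a sufficiently uniform form. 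The final displayed consequence, with $\gamma(T)\lesssim T^{-\alpha}$, is then immediate by summing the dyadic telescoping series $\sum_{j\ge0} k!\,C^k (2^j T)^{-2^{-k}\alpha}$, which converges with sum $\lesssim_\alpha k!\,C^k T^{-2^{-k}\alpha}$.
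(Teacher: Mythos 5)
Your plan has a genuine gap at its central step, and the mechanism you propose for the exponent does not produce the right one. You want to bound each term of the cluster sum by interpolating between ``small in $T$'' and ``decaying in the inclusion distances'', with \emph{each} of the $k$ difference operators contributing a factor $\gamma(T)^{1/2}$; but $k$ such factors would give $\gamma(T)^{k/2}$, whereas the claimed rate is $\gamma(T)^{2^{-k}}$, which \emph{deteriorates} with $k$ (it is an iterated square root, not a product of square roots). More importantly, the interpolation itself is exactly what you flag as ``the main obstacle'' and it is not available from the hypotheses: assumption~\eqref{eq:quantas} controls only the single stationary quantity $\E[|\nabla(\phi_T^{(p)}-\phi_{2T}^{(p)})|^2]$, and neither it nor Proposition~\ref{prop:apimproved} (which concerns differences of $\phi_T$ at a \emph{fixed} $T$) gives any control of the quantities $\nabla\delta^F(\phi_T-\phi_{2T})$, summed over $k$-uplets $F$, that your term-by-term argument needs. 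Note also that the fields $A^{G\cup E^{(p)}\setminus F}$ appearing in~\eqref{eq:nonsymapprox} are \emph{not} stationary (finitely many prescribed inclusions are switched), so~\eqref{eq:quantas} does not ``transfer'' to them as stated; some new estimate would have to be proved, and nothing in your outline supplies it.

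The paper's proof avoids all of this by never estimating the cluster sums directly. It works at the level of the homogenized coefficients and the Taylor expansion in the Bernoulli parameter: by~\eqref{eq:approxok} (equivalently~\eqref{eq:devtaylorTfix}), for every $k$ and every admissible $p$ one has $|\Delta_T^{(p_0+p)}-\sum_{j=1}^k p^j\Delta_T^{(p_0),j}|\le (Cp)^{k+1}$ with $C$ \emph{uniform in $T$}; writing this at scales $T$ and $2T$ and subtracting, and using $|A_T^{(q)}-A_{2T}^{(q)}|\lesssim\E[|\nabla(\phi_T^{(q)}-\phi_{2T}^{(q)})|]\lesssim\gamma(T)$ (Cauchy--Schwarz plus~\eqref{eq:quantas}, uniformly in $q$), one gets $|\sum_{j=1}^k p^j(\Delta_T^{(p_0),j}-\Delta_{2T}^{(p_0),j})|\le (Cp)^{k+1}+C\gamma(T)$, i.e.~\eqref{eq:respresqueokplopda}. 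One then inducts on $j$: subtract the already-estimated lower-order terms, divide by $p^{j}$, and \emph{optimize in $p$}, choosing $p=\gamma(T)^{2^{-j}}$ at step $j$; it is this optimization, balancing the remainder $(Cp)^{k+1}$ against $\gamma(T)/p^{j}$, that loses a square root at each order and produces the exponent $2^{-k}$ in~\eqref{eq:toshowrecda}. Your final dyadic summation to pass from the $T$ versus $2T$ bound to the bound against $A^{(p),k}_{\hom}$ when $\gamma(T)\lesssim T^{-\alpha}$ is correct, but the core estimate needs the $p$-optimization argument (or a genuinely new family of estimates on $T$-differences of higher-order corrector differences, which you have not provided).
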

It is not clear to us whether Corollary~\ref{cor:rates} is optimal, and symmetric approximations could yield better rates.
Such improvements, which would require nontrivial arguments based on quantitative homogenization theory, are not the goal of this article.

The optimal expected rate $\gamma$ for the approximate corrector gradient in~\eqref{eq:quantas} is as follows:
\begin{align*}
\gamma(T)^2=\begin{cases}
{T^{-1}},&\text{if $d=2$;}\\
{T^{-3/2}},&\text{if $d=3$;}\\
{T^{-2}\log T},&\text{if $d=4$;}\\
{T^{-2}},&\text{if $d>4$.}
\end{cases}
\end{align*}
It holds for instance under the assumption that the coefficients satisfy a spectral gap estimate (see \cite{Gloria-Otto-10b}), and therefore covers the example of the Poisson point process $(q_n)_n$ with inclusions $J_n$ defined as the intersection of the Voronoi cells with the ball of radius $1$ centered at $q_n$,
with fixed conductivity $A'\equiv A_1$ in the inclusions, and $A\equiv A_2$ in the reference medium.
In higher dimensions, extrapolations techniques wrt $T$ are needed (cf. \cite{Gloria-Neukamm-Otto-14}).

\medskip

In the following remark, we shortly discuss the need for assumption~\eqref{eq:boundrho}, that is finiteness of the degree of intersections between the inclusions.

\begin{rem}\label{rem:asboundint}
Assumption~\eqref{eq:boundrho} is crucially used in the proof of the new family of improved energy estimates (Proposition~\ref{prop:apimproved}), which are in turn at the core of the analyticity result.
It is unclear to us how this assumption can be relaxed in general.
In particular, this approach cannot treat the natural example of a Poisson point process with spherical inclusions of fixed radius (that is, $\rho=(q_n)_n$ is a Poisson point process and $J_n:=B(q_n)$, with (say) fixed conductivity $A'\equiv A_1$ in the inclusions and $A\equiv A_2$ in the reference medium).
In this specific example, the random fields $A^{(p)}$'s all satisfy a spectral gap estimate (in the form used in \cite{Gloria-Marahrens-14,Gloria-Otto-10b}) and the point process satisfies $\E[\rho(Q)^s]<\infty$ for all $s\ge1$ (we even have $\E[e^{c\rho(Q)}]<\infty$ for all $c>0$).
The quantitative results of \cite{Gloria-Marahrens-14,Gloria-Otto-10b} then provide
additional analytical tools, as used in the discrete setting by Mourrat~\cite{Mourrat-13}, 
which allow to prove a weaker version of Proposition~\ref{prop:apimproved} and conclude that the map $p\mapsto A^{(p)}_{\hom}$ is (at least) $C^\infty$ on $[0,1]$ with derivatives given by the same analytical formulas as before.
Yet, this is not enough to prove analyticity. In addition this approach makes quantitative assumptions on the random fields themselves (and not only on the point process), which contrasts dramatically with the results of this paper, and yields limitations in terms of applications (it is however enough to have the Clausius-Mossotti formulas in the form of Corollaries~\ref{cor:cm} and~\ref{cor:cm2}). Likewise, a quantitative approach (this time based on recent contributions by Lamacz, Neukamm and Otto \cite{LNO-13}) allows to prove that, in the case of Bernoulli bond percolation on the integer lattice, the homogenized conductivity is (at least) $C^\infty$ below the percolation threshold.
We refer the readers to \cite{MD-thesis} for details on the results for the Poisson point process with sphrerical inclusions and for diffusion on the percolation cluster.
\qed
\end{rem}

\medskip

Before we turn to the proofs, let us emphasize an observation by Anantharaman and Le Bris in \cite{Anantharaman-LeBris-11} on the regularity of the corrector with respect to $p$ in the case of disjoint inclusions --- which dramatically contrasts with the analyticity of $A^{(p)}_{\hom}$.

\begin{rem}\label{rem:ALB}
By testing the equation $-\nabla\cdot A^{(p)}\nabla(\phi^{(p)}-\phi)=\nabla\cdot C^{(p)}(\nabla\phi+\xi)$ in probability, we have
\begin{align*}
\E[|\nabla(\phi^{(p)}-\phi)|^2]&\lesssim \E[|C^{(p)}|^2(1+|\nabla\phi|^2)]\sim p.
\end{align*}
We believe this scaling is optimal, so that the map $[0,1]\to\Ld^2(\Omega):p\mapsto \nabla\phi^{(p)}(0)$ is expected to be nowhere differentiable.
Since we prove that the homogenized coefficients are analytic, this illustrates that averaged quantities behave much better than pointwise quantities (like the corrector gradient). 
\qed
\end{rem}

\section{Strategy of the proof}\label{sec:strategy}

In this section we present the strategy of the proof of Theorem~\ref{th:analytic}. The key ingredient is a new family of energy estimates, the proof of which essentially combines combinatorial and induction arguments.
When inclusions are disjoint, the combinatorics is significantly less involved than in the general case of non-necessarily disjoint inclusions.
In order to focus only on the core of the proof of Theorem~\ref{th:analytic} and to avoid additional combinatorial technicalities in this presentation, we shall momentarily assume that the inclusions are disjoint.

Fix some direction $\xi\in\R^d$, $|\xi|=1$.
The aim of this paper is to investigate the difference
\[\Delta^{(p)}:=\xi\cdot (A^{(p)}_{\hom}-A_{\hom})\xi,\]
and express it as a convergent power series in the variable $p$ around $0$.
Since the approximate correctors behave much better than the correctors themselves, we start with the analysis of 
the approximate difference
\[\Delta_{T}^{(p)}:=\xi\cdot (A^{(p)}_{T}-A_{T})\xi,\]
for fixed $T>0$. 
Indeed, the approximate difference is a good proxy for the difference since  $\lim_T\Delta_T^{(p)}=\Delta^{(p)}$ by~\eqref{eq:approxhomcoeff}. Next we rewrite the approximate difference in a form which is more suitable for the analysis. By definition,
\begin{align}
\Delta_T^{(p)}&=\E[\xi\cdot A^{(p)}(\nabla\phi_T^{(p)}+\xi)]-\E[\xi\cdot A(\nabla\phi_T+\xi)]\nonumber\\
&=\E[\xi\cdot C^{(p)}(\nabla\phi_T^{(p)}+\xi)]+\E[\xi\cdot A\nabla(\phi_T^{(p)}-\phi_T)].\label{eq:baddecomp0}
\end{align}
The first term is already in a nice form (since it is of order $p$), while the second term is not (recall Remark~\ref{rem:ALB}: an energy estimate would only imply that it is of order $\sqrt{p}$). In the following lemma, we make use of the corrector equation to unravel some cancellations.

\begin{lem}\label{lem:firstdecomp}
The approximate difference $\Delta_T^{(p)}$ satisfies
\begin{align}
\Delta_T^{(p)}&=\E[(\nabla\phi_T+\xi)\cdot C^{(p)}(\nabla\phi_T^{(p)}+\xi)]\label{eq:err1wd}.
\end{align}
\end{lem}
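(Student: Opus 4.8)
\textbf{Proof strategy for Lemma~\ref{lem:firstdecomp}.}
The plan is to start from the decomposition~\eqref{eq:baddecomp0}, namely
\[\Delta_T^{(p)}=\E[\xi\cdot C^{(p)}(\nabla\phi_T^{(p)}+\xi)]+\E[\xi\cdot A\nabla(\phi_T^{(p)}-\phi_T)],\]
and rewrite the second (bad) term by exploiting the corrector equations~\eqref{eq:modif-corr}. First I would use the equation for $\phi_T$ (the unperturbed approximate corrector) in the weak formulation: testing $\frac1T\phi_T-\nabla\cdot A(\nabla\phi_T+\xi)=0$ against the stationary test field $\phi_T^{(p)}-\phi_T$ (which lies in the natural energy space by the estimates~\eqref{eq:modif-corr-estim}), and taking expectations, gives
\[\frac1T\E[\phi_T(\phi_T^{(p)}-\phi_T)]+\E[\nabla(\phi_T^{(p)}-\phi_T)\cdot A(\nabla\phi_T+\xi)]=0.\]
This identifies $\E[\xi\cdot A\nabla(\phi_T^{(p)}-\phi_T)]$ up to the gradient-gradient term $\E[\nabla\phi_T\cdot A(\nabla\phi_T^{(p)}-\nabla\phi_T)]$ and the massive correction; by symmetry of $A$ and the equation for $\phi_T^{(p)}$ (tested against $\phi_T$, or equivalently against $\phi_T^{(p)}-\phi_T$ itself) one trades this term for $-\E[\nabla\phi_T\cdot C^{(p)}(\nabla\phi_T^{(p)}+\xi)]$, the massive terms cancelling pairwise. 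Adding $\E[\xi\cdot C^{(p)}(\nabla\phi_T^{(p)}+\xi)]$ then collapses the whole expression into $\E[(\nabla\phi_T+\xi)\cdot C^{(p)}(\nabla\phi_T^{(p)}+\xi)]$, which is~\eqref{eq:err1wd}.

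Concretely, the cleanest route is to test \emph{both} equations in~\eqref{eq:modif-corr} (the one for $A$ and the one for $A^{(p)}=A+C^{(p)}$) against the \emph{same} stationary field $\phi_T^{(p)}-\phi_T$, take expectations, and subtract. Writing $w:=\phi_T^{(p)}-\phi_T$, this yields
\[\tfrac1T\E[(\phi_T^{(p)}-\phi_T)w]+\E[\nabla w\cdot A^{(p)}(\nabla\phi_T^{(p)}+\xi)]-\E[\nabla w\cdot A(\nabla\phi_T+\xi)]=0,\]
i.e. after inserting $A^{(p)}=A+C^{(p)}$ and rearranging,
\[\E[\nabla w\cdot A\nabla w]+\tfrac1T\E[w^2]=-\E[\nabla w\cdot C^{(p)}(\nabla\phi_T^{(p)}+\xi)].\]
Separately, testing the equation for $A$ against the stationary $\phi_T^{(p)}$ and the equation for $A^{(p)}$ against $\phi_T$ produces the off-diagonal identity needed to re-express $\E[\xi\cdot A\nabla w]$; combining these two elementary integrations by parts with the definition of $\Delta_T^{(p)}$ gives~\eqref{eq:err1wd} after all spurious terms cancel. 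The symmetry of $A$ is used precisely to identify $\E[\nabla\phi_T\cdot A\nabla\phi_T^{(p)}]$ with $\E[\nabla\phi_T^{(p)}\cdot A\nabla\phi_T]$.

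The only genuine subtlety — and thus the main point to be careful about — is the \emph{justification} of these integrations by parts in the probability/physical-space setting: one must check that $\phi_T^{(p)}-\phi_T$ is an admissible test function, i.e. that it is stationary with the right integrability so that the ``boundary terms at infinity'' vanish and $\E[\nabla\cdot(\,\cdot\,)]=0$ holds. This is exactly what the uniform energy estimates~\eqref{eq:modif-corr-estim} guarantee, together with stationarity of $\nabla\phi_T$, $\nabla\phi_T^{(p)}$, and of the difference field; the massive term $T^{-1}$ is what makes $\phi_T$ and $\phi_T^{(p)}$ themselves (not just their gradients) square-integrable, so no subtraction of means is needed and the manipulations are fully rigorous for each fixed $T>0$. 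Everything else is bookkeeping.
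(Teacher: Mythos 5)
Your strategy is correct and is essentially the paper's own proof: starting from \eqref{eq:baddecomp0}, cross-testing the two equations in \eqref{eq:modif-corr} in probability (one against $\phi_T$, the other against $\phi_T^{(p)}$, or equivalently against their difference), using the symmetry of $A$ and $C^{(p)}$, and letting the massive terms cancel pairwise yields $\E[\xi\cdot A\nabla(\phi_T^{(p)}-\phi_T)]=\E[\nabla\phi_T\cdot C^{(p)}(\nabla\phi_T^{(p)}+\xi)]$ and hence \eqref{eq:err1wd}. The only cosmetic remark is that the diagonal energy identity $\E[\nabla w\cdot A\nabla w]+\tfrac1T\E[w^2]=-\E[\nabla w\cdot C^{(p)}(\nabla\phi_T^{(p)}+\xi)]$ in your ``cleanest route'' is not needed; the two off-diagonal tests (whose admissibility you correctly justify via stationarity and \eqref{eq:modif-corr-estim}) already suffice, exactly as in the paper.
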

\begin{proof}
Using that $A^{(p)}=A+C^{(p)}$, the second term of~\eqref{eq:baddecomp0} turns into
\begin{align*}
&{\E[\xi\cdot A\nabla (\phi_T^{(p)}-\phi_T)]}\\
=~&\E[(\nabla \phi_T^{(p)}+\xi)\cdot A\nabla (\phi_T^{(p)}-\phi_T)]-\E[\nabla \phi_T^{(p)}\cdot A\nabla (\phi_T^{(p)}-\phi_T)]\\
=~&\E[(\nabla \phi_T^{(p)}+\xi)\cdot A^{(p)}\nabla (\phi_T^{(p)}-\phi_T)]-\E[(\nabla \phi_T^{(p)}+\xi)\cdot C^{(p)}\nabla (\phi_T^{(p)}-\phi_T)]\\
&-\E[\nabla \phi_T^{(p)}\cdot A^{(p)}(\nabla \phi_T^{(p)}+\xi)]+\E[\nabla \phi_T^{(p)}\cdot A(\nabla\phi_T+\xi)]+\E[\nabla \phi_T^{(p)}\cdot C^{(p)}(\nabla \phi_T^{(p)}+\xi)].
\end{align*}
By symmetry of the coefficients $A$ and $C^{(p)}$, reorganizing the terms yields
\begin{align}
{\E[\xi\cdot A\nabla (\phi_T^{(p)}-\phi_T)]}&=-\E[\nabla \phi_T\cdot A^{(p)}(\nabla \phi_T^{(p)}+\xi)]+\E[\nabla \phi_T^{(p)}\cdot A(\nabla\phi_T+\xi)]\nonumber\\
&\hspace{1cm}+\E[\nabla \phi_T\cdot C^{(p)}(\nabla \phi_T^{(p)}+\xi)].\label{eq:Ant-2.1}
\end{align}
The sum of the first two terms of the right-hand side of \eqref{eq:Ant-2.1} coincides with the sum of the weak formulations in probability of the equations
\begin{align*}
\frac1T\phi_T^{(p)}-\nabla\cdot A^{(p)}(\nabla\phi_T^{(p)}+\xi)=0\qquad\text{and}\qquad\frac1T\phi_T-\nabla\cdot A(\nabla\phi_T+\xi)=0,
\end{align*}
tested with $\phi_T$ and $\phi^{(p)}_T$ respectively, so that \eqref{eq:Ant-2.1} reduces to
\begin{align*}
\E[\xi\cdot A\nabla (\phi_T^{(p)}-\phi_T)]&=\E[\nabla \phi_T\cdot C^{(p)}(\nabla \phi_T^{(p)}+\xi)],
\end{align*}
and~\eqref{eq:err1wd} then follows from~\eqref{eq:baddecomp0}.
\end{proof}

Assuming that the inclusions are disjoint, we may use the inclusion-exclusion formula~\eqref{eq:excl2} in the elementary form of~\eqref{eq:exclDISJ},
so that~\eqref{eq:err1wd} turns into
\[\Delta_T^{(p)}=\sum_{n}\E\left[(\nabla\phi_T+\xi)\cdot C^{\{n\}}(\nabla\phi_T^{(p)}+\xi)\mathds1_{n\in E^{(p)}}\right],\]
or alternatively, using the constraint $n\in E^{(p)}$ to replace $\phi_T^{(p)}$ by $\phi_T^{E^{(p)}\cup\{n\}}$,
\[\Delta_T^{(p)}=\sum_{n}\E\left[(\nabla\phi_T+\xi)\cdot C^{\{n\}}(\nabla\phi_T^{E^{(p)}\cup\{n\}}+\xi)\mathds1_{n\in E^{(p)}}\right].\]
Note that this sum is absolutely convergent since the $C^{\{n\}}$'s are assumed to have disjoint supports.
As $\mathds1_{n\in E^{(p)}}$ only depends on $b_n^{(p)}$ and $(\nabla\phi_T+\xi)\cdot C^{\{n\}}(\nabla\phi_T^{E^{(p)}\cup \{n\}}+\xi)$ does not depend on $b_n^{(p)}$, we have by independence~\eqref{prod-struc}, using that $b_n^{(p)}$ is a Bernoulli random variable of parameter $p$,
\begin{align*}
\Delta_T^{(p)}=p\sum_{n}\E\left[(\nabla\phi_T+\xi)\cdot C^{\{n\}}(\nabla\phi_T^{E^{(p)}\cup \{n\}}+\xi)\right].
\end{align*}
We further decompose the right-hand side
\begin{align*}
\Delta_T^{(p)}&=p\sum_{n}\E\left[(\nabla\phi_T+\xi)\cdot C^{\{n\}}(\nabla\phi_T^{\{n\}}+\xi)\right]\\
&\qquad+p\sum_{n}\E\left[(\nabla\phi_T+\xi)\cdot C^{\{n\}}\nabla(\phi_T^{E^{(p)}\cup \{n\}}-\phi_T^{\{n\}})\right],
\end{align*}
and observe that the second sum is a difference of the same nature as $\Delta_T^{(p)}$, which begs for an induction argument, and the following lemma is indeed proved by induction (see Lemma~\ref{lem:decompdiffdisj} for a more general statement).
\begin{lem}\label{lem:decompdiffdisj-sim}
Assume that the inclusions $J_n$'s are disjoint and that $\E[\rho(Q)^s]<\infty$ for all $s\ge1$. For all $k\ge0$, all $T>0$, and all $p\in[0,1]$, we have
\begin{align}\label{eq:toprdecnondisj-0}
\Delta_T^{(p)}&=\sum_{j=1}^kp^j\Delta_T^j+p^{k+1}E_T^{(p),k+1}
\end{align}
where, for all $0\le j\le k$, the approximate derivatives $\Delta_T^j$ and the 
error $E_T^{(p),k+1}$ are given by
\begin{align}\label{eq:rewriteSkdisj1-0}
\Delta_T^j&:=~\sum_{|F|=j}\sum_{n\in F}\E\left[\nabla\delta_\xi^{F\setminus\{n\}}\phi_T\cdot C^{\{n\}}(\nabla\phi_T^{F}+\xi)\right],
\\
\label{eq:rewriteSkdisj1err-0}
E_T^{(p),k+1}&:=~\sum_{|F|=k+1}\sum_{n\in F}\E\left[\nabla\delta_\xi^{F\setminus\{n\}}\phi_T\cdot C^{\{n\}}(\nabla\phi_T^{E^{(p)}\cup F}+\xi)\right],
\end{align}
and the sums in~\eqref{eq:rewriteSkdisj1-0} and~\eqref{eq:rewriteSkdisj1err-0} are absolutely convergent.\qed
\end{lem}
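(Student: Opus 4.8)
The plan is to prove~\eqref{eq:toprdecnondisj-0} by induction on $k\ge0$, iterating the manipulation performed just above the statement. The base case $k=0$ is exactly the identity $\Delta_T^{(p)}=pE_T^{(p),1}$ displayed before the lemma: starting from Lemma~\ref{lem:firstdecomp}, one uses the disjoint inclusion-exclusion formula~\eqref{eq:exclDISJ}, then the constraint $n\in E^{(p)}$ to replace $\phi_T^{(p)}$ by $\phi_T^{E^{(p)}\cup\{n\}}$ (these agree on $\{n\in E^{(p)}\}$), and finally the independence relation~\eqref{prod-struc}, noting that $(\nabla\phi_T+\xi)\cdot C^{\{n\}}(\nabla\phi_T^{E^{(p)}\cup\{n\}}+\xi)$ does not depend on $b_n^{(p)}$, to pull out the factor $p=\E[\mathds1_{n\in E^{(p)}}]$; since $\nabla\delta_\xi^{\varnothing}\phi_T=\nabla\phi_T+\xi$, the result is precisely $pE_T^{(p),1}$ in the form~\eqref{eq:rewriteSkdisj1err-0}.

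For the induction step I would show that $E_T^{(p),k+1}=\Delta_T^{k+1}+pE_T^{(p),k+2}$. In~\eqref{eq:rewriteSkdisj1err-0} I would split $\nabla\phi_T^{E^{(p)}\cup F}+\xi=(\nabla\phi_T^{F}+\xi)+\nabla(\phi_T^{E^{(p)}\cup F}-\phi_T^{F})$: the first summand reproduces $\Delta_T^{k+1}$ verbatim by~\eqref{eq:rewriteSkdisj1-0}, while the second summand is a difference of the same type, one order higher. To treat it I would use that, the inclusions being disjoint, $A^{E^{(p)}\cup F}=A^{F}+\sum_{m\notin F}\mathds1_{m\in E^{(p)}}C^{\{m\}}$, so that telescoping the remainder $\phi_T^{E^{(p)}\cup F}-\phi_T^{F}$ (equivalently, testing the equation it solves) introduces one further inclusion index $m\notin F$; because the factor carrying $m$ and the indicator $\mathds1_{m\in E^{(p)}}$ depend on disjoint families of Bernoulli variables, \eqref{prod-struc} again extracts exactly one power of $p$, and a resummation of the resulting $(k+1)$-fold difference operators, governed by~\eqref{eq:def-diff}, \eqref{eq:defdeltaxi} and~\eqref{eq:defdeltaxi1}, rewrites the remainder as $pE_T^{(p),k+2}$ in the form~\eqref{eq:rewriteSkdisj1err-0}. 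This is the disjoint special case of the more general Lemma~\ref{lem:decompdiffdisj}, whose proof carries out precisely this bookkeeping for possibly overlapping inclusions.

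For absolute convergence of the sums in~\eqref{eq:rewriteSkdisj1-0} and~\eqref{eq:rewriteSkdisj1err-0} at fixed $T$, I would argue that each $C^{\{n\}}$ is bounded and supported in $B_R(q_n)$, that the approximate correctors obey the deterministic energy estimates~\eqref{eq:modif-corr-estim} uniformly in the perturbation set, and that by stationarity a Campbell-type computation turns the sum over $F$ into a series controlled by the moments $\E[\rho(Q)^s]$, $s\ge1$; the smallness of $\nabla\delta_\xi^{F\setminus\{n\}}\phi_T$ unless the inclusions of $F$ cluster --- quantified by the improved energy estimates of Proposition~\ref{prop:apimproved} --- supplies the decay that makes the $j$-fold sums summable.

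The delicate point is the combinatorics of the induction step: one must keep precise track of which Bernoulli variables enter each factor so that~\eqref{prod-struc} yields exactly one extra power of $p$ at each stage, and verify that resumming the difference operators reproduces the structure of $E_T^{(p),k+2}$ rather than some ordering-dependent, truncated variant. This, together with the decay estimate behind absolute convergence, is where the real work lies, and it becomes substantially heavier once inclusions are allowed to overlap (hence the separate treatment via Lemma~\ref{lem:decompdiffdisj}).
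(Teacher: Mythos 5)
Your overall architecture is the right one and matches the paper's (the paper proves this statement as the disjoint special case of Lemma~\ref{lem:decompdiffdisj}, by exactly the induction you set up, with the same base case via Lemma~\ref{lem:firstdecomp}, \eqref{eq:exclDISJ} and \eqref{prod-struc}, and the same recursion $E_T^{(p),k+1}=\Delta_T^{k+1}+pE_T^{(p),k+2}$). But the induction step as you describe it has a real gap: the conversion of the remainder $\sum_{|F|=k+1}\sum_{n\in F}\E\big[\nabla\delta_\xi^{F\setminus\{n\}}\phi_T\cdot C^{\{n\}}\nabla(\phi_T^{E^{(p)}\cup F}-\phi_T^{F})\big]$ into $pE_T^{(p),k+2}$ is \emph{not} achieved by ``resummation of the difference operators'' governed by \eqref{eq:def-diff}--\eqref{eq:defdeltaxi1}, nor by merely ``testing the equation it solves'' in an unspecified way. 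The left factors in the remainder are $k$-fold differences, while those in $E_T^{(p),k+2}$ are $(k+1)$-fold differences; no combinatorial identity among the $\delta^F$'s bridges that. The mechanism is a duality (cross-testing) identity: since, for disjoint inclusions, $\delta_\xi^{F}\phi_T$ solves $\frac1T\delta_\xi^F\phi_T-\nabla\cdot A^{F}\nabla\delta_\xi^F\phi_T=\sum_{n\in F}\nabla\cdot C^{\{n\}}\nabla\delta_\xi^{F\setminus\{n\}}\phi_T$ (this is \eqref{eq:satif1} of Lemma~\ref{lem:eqsatif}), and $\phi_T^{E^{(p)}\cup F}-\phi_T^{F}$ solves the equation with the \emph{same} operator $A^{F}$ and right-hand side $\nabla\cdot C^{(p)}_{\|F}(\nabla\phi_T^{E^{(p)}\cup F}+\xi)$, testing each equation against the other solution and using the symmetry of $A^{F}$ collapses the sum over $n\in F$ and yields $\sum_{|F|=k+1}\E\big[\nabla\delta_\xi^{F}\phi_T\cdot C^{(p)}_{\|F}(\nabla\phi_T^{E^{(p)}\cup F}+\xi)\big]$. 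Only then does your step apply: write $C^{(p)}_{\|F}=\sum_{m\notin F}\mathds1_{m\in E^{(p)}}C^{\{m\}}$, note that after replacing $E^{(p)}\cup F$ by $E^{(p)}\cup F\cup\{m\}$ the summand no longer depends on $b_m^{(p)}$, extract $p$ by \eqref{prod-struc}, and reindex $F\cup\{m\}\leadsto F'$ to recover \eqref{eq:rewriteSkdisj1err-0} at order $k+2$. This cross-testing with the matched operator $A^F$ is the heart of the proof and must be made explicit; without it the induction step does not close.

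A second, smaller correction: the absolute convergence of the sums in \eqref{eq:rewriteSkdisj1-0}--\eqref{eq:rewriteSkdisj1err-0} at fixed $T$ is not supplied by Proposition~\ref{prop:apimproved} (or Lemma~\ref{lem:aprioriproba}); those give bounds uniform in $T$ on summed squares but no decay in the separation of the points of $F$, and a Cauchy--Schwarz attempt along those lines leaves an untamed sum over the $k$ indices not carrying the indicator $\mathds1_{J_n}$. What makes the fixed-$T$ sums absolutely convergent in the paper is the exponential localization of $\delta_\xi^F\phi_T$ around the inclusions of $F$ encoded in the $T$-dependent deterministic bounds of Lemma~\ref{lem:finiteT}, combined with the moment assumption $\E[\rho(Q)^s]<\infty$ exactly as in Lemma~\ref{lem:convfinT}; your Campbell-type computation should be run with those bounds, not with \eqref{eq:modif-corr-estim} and Proposition~\ref{prop:apimproved}.
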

Since the combinatorics in the proof of Lemma~\ref{lem:decompdiffdisj}
is not more involved than for the proof of Lemma~\ref{lem:decompdiffdisj-sim}, we
refer the reader to the proof of the former. 

If we can prove that $|E_T^{(p),k}|\le C^k$ for all $k\ge1$ and for some constant $C\sim1$ (independent of $T>0$ and of $p\in[0,1]$), then we can easily pass to the limit $T\uparrow\infty$ in the expansion~\eqref{eq:toprdecnondisj-0} and obtain a convergent power-series expansion for the exact difference $\Delta^{(p)}$ itself around $p=0$.

The following lemma shows that a new family of energy estimates is needed to control the error terms.
We display the proof of this lemma, which is significantly simpler than the corresponding proof in the general case of non-necessarily disjoint inclusions (see Proposition~\ref{prop:apriori}).
\begin{lem}\label{lem:errbound-0}
Assume that the inclusions $J_n$'s are disjoint and that $\E[\rho(Q)^s]<\infty$ for all $s\ge1$. Then, there is a constant $C\sim1$ (independent of $T$, of $p$ and of the moments of $\rho$) such that, for all $k\ge0$, $T>0$, and $p\in [0,1]$, the error $E_T^{(p),k+1}$ defined in Lemma~\ref{lem:decompdiffdisj} satisfies
\begin{align}\label{eq:errbound-0}
|E^{(p),k+1}_T|&\lesssim \sum_{j=0}^{k}\E\bigg[\sum_{|G|=j}\Big|\sum_{|F|=k-j\atop F\cap G=\varnothing}\nabla\delta_\xi^{F\cup G}\phi_T\Big|^2\bigg]+\sum_{j=0}^{k+1}\E\Big[\sum_{|G|=j}|\nabla\delta_\xi^G\phi_T^{(p)}|^2\Big].
\end{align}
\qed
\end{lem}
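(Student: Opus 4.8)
The plan is to start from the explicit formula for the error term,
\[
E_T^{(p),k+1}=\sum_{|F|=k+1}\sum_{n\in F}\E\left[\nabla\delta_\xi^{F\setminus\{n\}}\phi_T\cdot C^{\{n\}}(\nabla\phi_T^{E^{(p)}\cup F}+\xi)\right],
\]
and to bound the integrand by Cauchy--Schwarz, taking advantage of the fact that the perturbation $C^{\{n\}}$ is supported on the single inclusion $J_n$ and satisfies $|C^{\{n\}}|\lesssim1$. The key structural point is that since the inclusions are disjoint, the factor $C^{\{n\}}$ localizes everything on $J_n$, so summing over $n\in F$ and over the $(k+1)$-uplets $F$ produces, after Cauchy--Schwarz in the probability space, one sum whose first factor involves $\nabla\delta_\xi^{F\setminus\{n\}}\phi_T$ with $|F\setminus\{n\}|=k$ and whose second factor involves $\nabla\phi_T^{E^{(p)}\cup F}+\xi$. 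First I would rename $n$ and write $F=G\cup\{n\}$ (so $|G|=k$, $n\notin G$), noting $\delta_\xi^G\phi_T$ appears, and handle the two factors separately.

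For the first factor, the idea is that $\sum_{n}\mathds1_{J_n}(x)\,\big|\sum_{|G|=k,\,n\notin G}\nabla\delta_\xi^G\phi_T(x)\big|^2$ is, at each point $x$, a sum over the boundedly many (at most $\Gamma$, hence here at most $\theta_0$ or simply finitely many since inclusions are disjoint and locally finite) indices $n$ with $x\in J_n$; but more to the point, expanding $|F|=k+1$ as $\{n\}\cup G$ with $|G|=k$ is exactly the combinatorial identity needed to produce the terms $\sum_{|G|=j}\big|\sum_{|F|=k-j,\,F\cap G=\varnothing}\nabla\delta_\xi^{F\cup G}\phi_T\big|^2$ on the right-hand side of \eqref{eq:errbound-0} after regrouping by the "inner'' label. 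Concretely, I would use \eqref{eq:defdeltaxi1} to split $\delta_\xi^G\phi_T$ into pieces indexed by subsets, and also exploit that $\nabla\phi_T^{E^{(p)}\cup F}+\xi=\nabla\delta_\xi^\varnothing\phi_T^{E^{(p)}\cup F}$ can itself be expanded, via \eqref{eq:defdeltaxi1}, as $\sum_{S\subset F}\nabla\delta_\xi^{S}\phi_T^{E^{(p)}}$ — and further, $\nabla\delta_\xi^{S}\phi_T^{E^{(p)}}$ in terms of $\nabla\delta_\xi^{S'}\phi_T^{(p)}$ shifted by $E^{(p)}$ — so that the second factor after Cauchy--Schwarz becomes $\sum_{|G|=j}|\nabla\delta_\xi^G\phi_T^{(p)}|^2$. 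The whole computation is bookkeeping: track which of the $k+1$ indices goes to the "$C^{\{n\}}$'' slot, which go to the $\delta$ acting on $\phi_T$, and which go to the $\delta$ acting on $\phi_T^{(p)}$, and sum the resulting products using $ab\lesssim a^2+b^2$.

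The main obstacle will be the combinatorial accounting: one has to verify that after applying Cauchy--Schwarz and expanding both correctors via \eqref{eq:defdeltaxi1}, the sum over all configurations of the $k+1$ inclusions indeed collapses to precisely the two families of quadratic quantities displayed in \eqref{eq:errbound-0}, with the correct ranges $0\le j\le k$ (resp.\ $0\le j\le k+1$) and the correct disjointness constraint $F\cap G=\varnothing$ in the first sum. In particular one must check that no cross terms with both factors evaluated at $\phi_T$ (or both at $\phi_T^{(p)}$) survive in a way not covered by the right-hand side, which is where the structure of the difference operator and the binomial cancellation $\sum_l\binom{|F|}{l}(-1)^{|F|-l}=0$ are used. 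A secondary point is justifying absolute convergence of all the sums so that the rearrangements are legitimate: this follows from disjointness of the supports of the $C^{\{n\}}$'s together with the energy estimates \eqref{eq:modif-corr-estim} and the moment bound $\E[\rho(Q)^s]<\infty$, exactly as in the proof of Lemma~\ref{lem:decompdiffdisj-sim}. Once \eqref{eq:errbound-0} is established, the right-hand side is precisely the quantity controlled by the new family of energy estimates of Proposition~\ref{prop:apimproved}, which is why this lemma is the bridge between the combinatorial expansion and the analyticity proof.
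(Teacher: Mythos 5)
Your plan reproduces the paper's proof in all essentials: you start from the same formula for $E_T^{(p),k+1}$, expand the perturbed factor $\nabla\phi_T^{E^{(p)}\cup F}+\xi$ into the differences $\nabla\delta_\xi^{S}\phi_T^{(p)}$ via \eqref{eq:defdeltaxi1}, regroup so that the labels not attached to the $(p)$-side are summed inside the square, and conclude with Young's inequality, the disjointness bound $\sum_n\mathds1_{J_n}\le1$, and absolute convergence from Lemma~\ref{lem:finiteT}. The only inaccuracies are cosmetic: the first factor $\nabla\delta_\xi^{G}\phi_T$ is not itself split (there is nothing to expand there), and what yields the $n$-free constraint $F\cap G=\varnothing$ is the elementary splitting $\sum_{F\cap(G\cup\{n\})=\varnothing}=\sum_{F\cap G=\varnothing}-\sum_{F\ni n}$ (whose general form is \eqref{eq:claimcombi+}), not the binomial cancellation.
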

\begin{proof}
Let $k\ge0$. First rewrite the error as follows:
\begin{equation}\label{eq:ant-2.0}
E^{(p),k+1}_T=\sum_{|F|=k}\sum_{n\notin F}\E\left[\nabla\delta_\xi^F\phi_T\cdot C^{\{n\}}(\nabla\phi_T^{E^{(p)}\cup F\cup\{n\}}+\xi)\right].
\end{equation}
Recalling identity $\sum_{G\subset H}\nabla \delta_\xi^G\phi_T=\nabla\phi_T^H+\xi$ for all $H\subset \N$, we deduce
\[\nabla\phi_T^{E^{(p)}\cup F\cup\{n\}}+\xi=\sum_{G\subset F}\nabla \delta_\xi^G\phi_T^{(p)}+\sum_{G\subset F}\nabla \delta_\xi^{G\cup\{n\}}\phi_T^{(p)},\]
so that~\eqref{eq:ant-2.0} turns into
\[E^{(p),k+1}_T=\sum_{|F|=k}\sum_{G\subset F}\sum_{n\notin F}\E\left[\nabla\delta_\xi^F\phi_T\cdot C^{\{n\}}(\nabla\delta_\xi^G\phi_T^{(p)}+\nabla\delta_\xi^{G\cup\{n\}}\phi_T^{(p)})\right],\]
or equivalently
\begin{align}\label{eq:reerrorinter}
E^{(p),k+1}_T=\sum_{j=0}^k\sum_{|G|=j}\sum_{n\notin G}\sum_{|F|=k-j\atop F\cap(G\cup\{n\})=\varnothing}\E\left[\nabla\delta_\xi^{F\cup G}\phi_T\cdot C^{\{n\}}(\nabla\delta_\xi^G\phi_T^{(p)}+\nabla\delta_\xi^{G\cup\{n\}}\phi_T^{(p)})\right].
\end{align}
For all $n\notin G$ and all maps $f$, we  obviously have (compare with the more general statement~\eqref{eq:claimcombi+})
\[\sum_{|F|=k-j\atop F\cap(G\cup\{n\})=\varnothing}f(F,G,n)=\sum_{|F|=k-j\atop F\cap G=\varnothing}f(F,G,n)-\sum_{|F|=k-j-1 \atop F\cap (G\cup\{n\})=\varnothing}f(F\cup\{n\},G,n),\]
so that we may rearrange the terms in~\eqref{eq:reerrorinter} as follows:
\begin{align*}
|E^{(p),k+1}_T|&\lesssim \sum_{j=0}^k\sum_{|G|=j}\sum_{n\notin G}\E\bigg[\mathds1_{J_n}\Big|\sum_{|F|=k-j\atop F\cap G=\varnothing}\nabla\delta_\xi^{F\cup G}\phi_T\Big|\Big(|\nabla\delta_\xi^G\phi_T^{(p)}|+|\nabla\delta_\xi^{G\cup\{n\}}\phi_T^{(p)}|\Big)\bigg]\\
&\quad+\sum_{j=0}^{k}\sum_{|G|=j}\sum_{n\notin G}\E\bigg[\mathds1_{J_n}\Big|\sum_{|F|=k-j-1\atop F\cap (G\cup\{n\})=\varnothing}\nabla\delta_\xi^{F\cup G\cup\{n\}}\phi_T\Big|\Big(|\nabla\delta_\xi^G\phi_T^{(p)}|+|\nabla\delta_\xi^{G\cup\{n\}}\phi_T^{(p)}|\Big)\bigg].
\end{align*}
By Young's inequality and the fact that the inclusions $J_n$'s are disjoint, this  yields
\begin{align*}
|E^{(p),k+1}_T|&\lesssim \sum_{j=0}^k\sum_{|G|=j}\bigg(\E\bigg[\Big|\sum_{|F|=k-j\atop F\cap G=\varnothing}\nabla\delta_\xi^{F\cup G}\phi_T\Big|^2\bigg]+\E[|\nabla\delta_\xi^G\phi_T^{(p)}|^2]\bigg)\\
&\quad+\sum_{j=0}^{k}\sum_{|G|=j}\sum_{n\notin G}\bigg(\E\bigg[\mathds1_{J_n}\Big|\sum_{|F|=k-j-1\atop F\cap (G\cup\{n\})=\varnothing}\nabla\delta_\xi^{F\cup G\cup\{n\}}\phi_T\Big|^2\bigg]+\E[\mathds1_{J_n}|\nabla\delta_\xi^{G\cup\{n\}}\phi_T^{(p)}|^2]\bigg),
\end{align*}
and the announced result already follows.
\end{proof}

In view of \eqref{eq:errbound-0} it is enough to prove the following family of energy estimates: there exists $C\sim1$ such that for all $k\ge j\ge 0$ we have
\begin{equation}\label{eq:fam-ener}
\E\bigg[\sum_{|G|=j}\Big|\sum_{|F|=k-j\atop F\cap G=\varnothing} \nabla \delta_\xi^{F\cup G} \phi_T^{(p)}\Big|^2\bigg]\,\leq\, C^k.
\end{equation}
On the one hand, a straightforward energy estimate directly yields (cf. Lemma~\ref{lem:apjust1})
\begin{align}\label{eq:pureL2apbis}
\E\bigg[\Big|\sum_{n}\nabla\delta_\xi^{\{n\}}\phi_T^{(p)}\Big|^2\bigg]\lesssim1.
\end{align}
On the other hand, an induction argument yields for some $C\sim 1$ and all $j\ge 0$ (cf. Lemma~\ref{lem:aprioriproba})
\begin{align}\label{eq:pureL2ap0}
\E\bigg[\sum_{|F|=j}|\nabla\delta_\xi^{F}\phi_T^{(p)}|^2\bigg]\leq C^{j}.
\end{align}
For $j\le2$, this estimate already appears in~\cite{Anantharaman-LeBris-11} (with however the massive term approximation replaced by the approximation by periodization).
As mentioned in the introduction, in view of Lemmas~\ref{lem:decompdiffdisj-sim} and~\ref{lem:errbound-0}, these uniform bounds  (combined with the fact that the estimates are independent of $p$ and combined with some invariance argument due to the structure of Bernoulli random variables, see Step~3 of the proof of Theorem~\ref{th:analytic} in Section~\ref{chap:proofth1})  imply that $p\mapsto A_{\hom}^{(p)}$ is $C^{1,1}$ on $[0,1]$.

Before we describe the complete induction strategy used in Section~\ref{chap:apriori} to prove  \eqref{eq:fam-ener}, let us start by showing it in action, proving the result for $k=2$
based on the corresponding result for $k=1$ (that is, \eqref{eq:pureL2apbis} and~\eqref{eq:pureL2ap0} for $j=1$).
This proof is instructive in three respects: it implements the general induction strategy in the first nontrivial step, it shows that we need to use several forms of the equation satisfied by $\delta^{\{n,m\}} \phi_T$, and it suggests that the proof of these equivalent forms relies on combinatorial arguments.
\begin{lem}\label{lem:improvedap2}
Assume that the inclusions $J_n$'s are disjoint and that $\E[\rho(Q)^s]<\infty$ for all $s\ge1$. Then, for all $T>0$ and $p\in[0,1]$,
\begin{eqnarray}
 \E\bigg[\sum_{m\neq n} | \nabla\delta^{\{n,m\}}\phi_T^{(p)}|^2\bigg]&\lesssim&1,\label{eq:strat-ind-3}
\\
 \E\bigg[\sum_{n} \Big| \sum_{m,m\neq n}\nabla\delta^{\{n,m\}}\phi_T^{(p)}\Big|^2\bigg]&\lesssim&1,\label{eq:strat-ind-2}
\\
\E\bigg[\Big|\sum_{n\ne m}\nabla\delta^{\{n,m\}}\phi_T^{(p)}\Big|^2\bigg]&\lesssim&1. \label{eq:strat-ind-1}
\end{eqnarray}
\qed
\end{lem}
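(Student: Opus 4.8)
The plan is to establish the three estimates~\eqref{eq:strat-ind-3}--\eqref{eq:strat-ind-1} by an induction-in-$k$ scheme (here the case $k=2$), bootstrapping from the $k=1$ bounds~\eqref{eq:pureL2apbis} and~\eqref{eq:pureL2ap0}. The order matters: first~\eqref{eq:strat-ind-3}, then~\eqref{eq:strat-ind-2}, then~\eqref{eq:strat-ind-1}, since each subsequent estimate is a ``more summed'' version whose proof will reuse the previous one. For~\eqref{eq:strat-ind-3}, I would write down the equation satisfied by $\delta^{\{n,m\}}\phi_T^{(p)}$ (a second-order difference of the corrector equation). Applying the difference operator $\delta^{\{n,m\}}$ to the equations $\frac1T\phi_T^{H}-\nabla\cdot A^{H}(\nabla\phi_T^H+\xi)=0$ for $H$ ranging over the relevant subsets of $\{n,m\}\cup E^{(p)}$, and using that $A^{H}=A+C^{H}$ together with the disjointness of the inclusions (so $C^{\{n\}}C^{\{m\}}=0$ for $n\ne m$), yields an equation of the schematic form
\[
\tfrac1T\delta^{\{n,m\}}\phi_T^{(p)}-\nabla\cdot A^{(p)}\nabla\delta^{\{n,m\}}\phi_T^{(p)}=\nabla\cdot\Big(C^{\{n\}}\nabla\delta^{\{m\}}\phi_T^{\,\cdot}+C^{\{m\}}\nabla\delta^{\{n\}}\phi_T^{\,\cdot}\Big),
\]
with the correctors on the right evaluated at appropriate configurations. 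Testing this with $\delta^{\{n,m\}}\phi_T^{(p)}$ itself in probability, using uniform ellipticity on the left-hand side and Young's inequality on the right, one bounds $\E[|\nabla\delta^{\{n,m\}}\phi_T^{(p)}|^2]$ by terms like $\E[\mathds1_{J_n}(|\nabla\delta^{\{m\}}\phi_T^{\,\cdot}|^2+|\nabla\phi_T^{\,\cdot}+\xi|^2)]$; summing over $m\ne n$ and then $n$, the disjointness of the $J_n$'s collapses the inner sum and~\eqref{eq:pureL2ap0} (for $j=1$) together with the energy estimate~\eqref{eq:modif-corr-estim} closes the bound.

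For~\eqref{eq:strat-ind-2}, the point is that one cannot simply sum~\eqref{eq:strat-ind-3} over $m$ (that would lose a power of $\rho(Q)$), so instead I would test the \emph{summed} equation: fix $n$, sum the equation for $\delta^{\{n,m\}}\phi_T^{(p)}$ over all $m\ne n$, and test the resulting equation for $\sum_{m\ne n}\delta^{\{n,m\}}\phi_T^{(p)}$ against itself. The right-hand side now produces two groups of terms: one where the $C^{\{n\}}$ factor appears (these remain localized to $J_n$, so after summing over $n$ the disjointness again gives a clean $\lesssim1$ via~\eqref{eq:pureL2ap0}), and one where the $C^{\{m\}}$ factor appears with $m$ summed — this is exactly where~\eqref{eq:pureL2apbis}, i.e. the bound on $\sum_m\nabla\delta^{\{m\}}\phi_T^{(p)}$ (or rather its localized cousin $\sum_{m\ne n}\mathds1_{J_m}\nabla\delta^{\{m\}}\phi_T^{\,\cdot}$), must be invoked. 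Finally~\eqref{eq:strat-ind-1} follows the same template one level up: sum the equation over both $n$ and $m$, test $\sum_{n\ne m}\delta^{\{n,m\}}\phi_T^{(p)}$ against itself, and on the right-hand side recognize both factors as quantities already controlled by~\eqref{eq:strat-ind-2} (for the $C^{\{n\}}$-localized part, summed over $n$) and by~\eqref{eq:pureL2apbis}.

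The main obstacle I anticipate is organizing the combinatorics of the right-hand side correctly: the difference operator $\delta^{\{n,m\}}$ applied to the product $A^{H}(\nabla\phi_T^H+\xi)$ generates a sum of terms indexed by how one distributes the two ``marks'' $n,m$ between the coefficient field and the corrector gradient, and one must use identities such as~\eqref{eq:defdeltaxi1} to re-express each $\nabla\delta^{S}_\xi\phi_T^{H}$ consistently and to identify, after summation, precisely which of the already-established $k=1$ bounds ((\eqref{eq:pureL2apbis}) versus (\eqref{eq:pureL2ap0})) applies to each piece. Getting the localization bookkeeping right — tracking which factors carry an indicator $\mathds1_{J_n}$ so that disjointness can be exploited before summing — is the delicate part; once the equations are in the right form, the energy estimates themselves are routine applications of ellipticity and Young's inequality. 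This is why the authors split off the disjoint case first: the identity $C^{\{n\}}C^{\{m\}}=0$ and the orthogonality of supports eliminate the cross-terms that would otherwise make the induction far heavier (cf. Proposition~\ref{prop:apimproved} in the general case).
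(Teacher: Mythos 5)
Your global architecture — proving \eqref{eq:strat-ind-3}, then \eqref{eq:strat-ind-2}, then \eqref{eq:strat-ind-1}, each by an energy estimate on a (summed) second-difference equation, bootstrapping from the $k=1$ bounds \eqref{eq:pureL2apbis}--\eqref{eq:pureL2ap0} and exploiting disjointness — is indeed the right one. But there is a genuine gap at the first and decisive step: you work with a single form of the equation, with the operator $-\nabla\cdot A^{(p)}\nabla$ on the left, and leave the configurations on the right unspecified. The choice of operator is not bookkeeping. With $A^{(p)}$ on the left, the right-hand side is $\nabla\cdot C^{\{n\}}\nabla\delta^{\{m\}}\phi_T^{E^{(p)}\cup\{n\}}+\nabla\cdot C^{\{m\}}\nabla\delta^{\{n\}}\phi_T^{E^{(p)}\cup\{m\}}$, and after Young's inequality and the decomposition $\delta^{\{m\}}\phi_T^{E^{(p)}\cup\{n\}}=\delta^{\{m\}}\phi_T^{(p)}+\delta^{\{n,m\}}\phi_T^{(p)}$ you are left with $C_\epsilon\,\E\big[\sum_{n\ne m}\mathds1_{J_n}|\nabla\delta^{\{n,m\}}\phi_T^{(p)}|^2\big]$, i.e. the unknown quantity itself with a large (non-absorbable) constant and nothing yet available to control it: the estimate does not close. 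To prove \eqref{eq:strat-ind-3} one must use the equation with the \emph{fully perturbed} operator $A^{E^{(p)}\cup\{n,m\}}$, whose right-hand side involves only $C^{\{n\}}\nabla\delta^{\{m\}}\phi_T^{(p)}$ and $C^{\{m\}}\nabla\delta^{\{n\}}\phi_T^{(p)}$, so that after the indicator collapses the sum over the coefficient index the bound \eqref{eq:pureL2ap0} (for $j=1$) suffices; then \eqref{eq:strat-ind-2} uses the operator perturbed at $\{n\}$ only, and \eqref{eq:strat-ind-1} the unperturbed one. This ``three forms of the equation'' device (cf.\ Lemma~\ref{lem:eqsatif}) is precisely what the argument hinges on, and skipping it also scrambles your assignment of the $k=1$ inputs: it is the $C^{\{n\}}$-group (coefficient indexed by the \emph{unsummed} variable) that forces \eqref{eq:pureL2apbis} after completing the sum, while the group with $C^{\{m\}}$ and $m$ summed is handled by disjointness of supports together with \eqref{eq:pureL2ap0} and the previously established \eqref{eq:strat-ind-3} — the opposite of what you wrote.

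Two further points. Your intermediate bound ``terms like $\E[\mathds1_{J_n}(|\nabla\delta^{\{m\}}\phi_T^{\,\cdot}|^2+|\nabla\phi_T^{\,\cdot}+\xi|^2)]$'' cannot be correct as stated: a zeroth-order term $\mathds1_{J_n}|\nabla\phi_T^{\,\cdot}+\xi|^2$ does not depend on $m$, so summing over $m\ne n$ diverges; the whole content of the cancellation (here simply $C^{\{n,m\}}=C^{\{n\}}+C^{\{m\}}$ by disjointness) is that \emph{no} zeroth-order term survives at second order — only products of a coefficient localized at one index with a first-order difference in the other index. Finally, testing the equation for a fixed pair $(n,m)$ (or for fixed $n$ after summing over $m$) ``in probability'' is not licit, since these quantities are not stationary; one must first localize in space with a cut-off (Caccioppoli-type estimate), sum over the free indices to restore stationarity, take the expectation, and only then pass to the limit in the localization parameter, having secured beforehand the absolute convergence of all series for fixed $T$ (Lemma~\ref{lem:finiteT}). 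These last issues are fixable, but together with the choice-of-operator problem they mean the proposal, as written, does not yet yield even the first estimate \eqref{eq:strat-ind-3}.
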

\begin{proof}
For notational convenience, we consider $p=0$ only.
In what follows we take for granted that the series we consider are all absolutely converging, which is indeed ensured for fixed $T$ by the
(suboptimal) estimates of Lemma~\ref{lem:finiteT}.
We split the proof into four steps.
In the first step we give three forms of the equation satisfied by $\delta^{\{n,m\}}\phi_T$. In the second step we prove 
\eqref{eq:strat-ind-3} based on one equation and \eqref{eq:pureL2apbis}.
In the third step we prove \eqref{eq:strat-ind-2} based on another equation, 
\eqref{eq:pureL2apbis}, \eqref{eq:pureL2ap0} for $j=1$, and \eqref{eq:strat-ind-3}.
In the last step we prove \eqref{eq:strat-ind-1} based on a third form of the equation, \eqref{eq:pureL2apbis}, \eqref{eq:pureL2ap0} for $j=1$, \eqref{eq:strat-ind-3}, and  \eqref{eq:strat-ind-2}.

\medskip

\step{1} Equations satisfied by $\delta^{\{n,m\}}\phi_T$.

\noindent Let $m\neq n$. The equation satisfied by the difference  $\delta^{\{n,m\}}\phi_T$ can be written in several forms, with perturbed or unperturbed operators. 
With the unperturbed operator, we have
\begin{align*}
&\frac1T\delta^{\{n,m\}}\phi_T-\nabla\cdot A\nabla \delta^{\{n,m\}}\phi_T\\
=~&\nabla\cdot C^{\{n,m\}}(\nabla\phi_T^{\{n,m\}}+\xi)-\nabla\cdot C^{\{n\}}(\nabla\phi_T^{\{n\}}+\xi)-\nabla\cdot C^{\{m\}}(\nabla\phi_T^{\{m\}}+\xi).
\end{align*}
By the inclusion-exclusion formula in the simple form $C^{\{n,m\}}=C^{\{n\}}+C^{\{m\}}$ due to disjointness of the inclusions, the
equation takes the form
\begin{align}\label{eq:eqnsatis00}
\frac1T\delta^{\{n,m\}}\phi_T-\nabla\cdot A\nabla \delta^{\{n,m\}}\phi_T=\nabla\cdot C^{\{n\}}\nabla\delta^{\{m\}}\phi_T^{\{n\}}+\nabla\cdot C^{\{m\}}\nabla\delta^{\{n\}}\phi_T^{\{m\}}.
\end{align}
This equation will be used to prove \eqref{eq:strat-ind-1}.
A combinatorial argument (which is elementary here because the difference operators are of order 2 only and the inclusions are disjoint) allows one to turn the equation satisfied by $\delta^{\{n,m\}}\phi_T$ into 
\begin{equation}
\frac1T\delta^{\{n,m\}}\phi_T-\nabla\cdot A^{\{n\}}\nabla \delta^{\{n,m\}}\phi_T\,=\,\nabla\cdot C^{\{m\}}\nabla\delta^{\{n\}}\phi_T^{\{m\}}+\nabla\cdot C^{\{n\}}\nabla\delta^{\{m\}}\phi_T,\label{eq:eqnsatis00-bis}
\end{equation}
which involves a perturbed operator (with the partially perturbed coefficients $A^{\{n\}}$), and will be used to prove \eqref{eq:strat-ind-2}.
The third and last version of the equation takes the form:
\begin{equation}
\frac1T\delta^{\{n,m\}}\phi_T-\nabla\cdot A^{\{n,m\}}\nabla \delta^{\{n,m\}}\phi_T\,=\, \nabla\cdot C^{\{m\}}\nabla\delta^{\{n\}}\phi_T+\nabla\cdot C^{\{n\}}\nabla\delta^{\{m\}}\phi_T,\label{eq:eqnsatis00-ter}
\end{equation}
which involves the completely perturbed operator, and will be used to prove \eqref{eq:strat-ind-3}.

\medskip

\step{2} Proof of \eqref{eq:strat-ind-3}.

\noindent The starting point is \eqref{eq:eqnsatis00-ter}, the right-hand side of which only involves first-order differences of the unperturbed corrector $\phi_T$.
Although the argument of the expectation in the left-hand side of \eqref{eq:strat-ind-3} is stationary, the equation \eqref{eq:eqnsatis00-ter} is not stationary.
We shall first obtain energy estimates associated with \eqref{eq:eqnsatis00-ter} which are localized in space. It is only after summing these estimates over $n$ and $m$, taking the expectation, and passing to the limit in the localization parameter that the desired estimate \eqref{eq:strat-ind-3} in expectation will come out in the form
\begin{equation}\label{eq:strat-ind-3-1}
\E\bigg[\sum_{n\neq m} |\nabla \delta^{\{n,m\}}\phi_T|^2\bigg] ~\le~ C\,\E\bigg[\sum_{n}|\nabla \delta^{\{n\}}\phi_T|^2\bigg],
\end{equation}
to be combined with \eqref{eq:pureL2apbis}.

For all $N\ge 0$, we then introduce a cut-off function $\chi_N$ for $B_N$ in $B_{2N}$ such that $|\nabla \chi_N|\lesssim 1/N$, and test equation~\eqref{eq:eqnsatis00-ter}
with test function $\chi_N \delta^{\{n,m\}}\phi_T\in H^1(\R^d)$.
This yields for all $n\neq m$ after integration by parts, using the properties of $\chi_N$, and rearranging the terms:
\begin{multline*}
\int_{B_N} |\nabla \delta^{\{n,m\}}\phi_T|^2 \,\leq \,
C\int_{B_{2N}}(\mathds1_{J_n} |\nabla \delta^{\{m\}}\phi_T|+
\mathds1_{J_m} |\nabla \delta^{\{n\}}\phi_T|)|\nabla \delta^{\{n,m\}}\phi_T|
\\
+\frac{C}{N}\int_{B_{2N}}  |\nabla \delta^{\{n,m\}}\phi_T|| \delta^{\{n,m\}}\phi_T|
+\frac{C}{N} \int_{B_{2N}}(\mathds1_{J_n} |\nabla \delta^{\{m\}}\phi_T|+
\mathds1_{J_m} |\nabla \delta^{\{n\}}\phi_T|)| \delta^{\{n,m\}}\phi_T|.
\end{multline*}
We use Young's inequality on each term (to ultimately absorb part of the right-hand side into the left-hand side), sum this inequality over $n,m\in \N$ with $n\neq m$, and take the expectation to 
obtain (for a possibly larger $C$)
\begin{multline*}
\int_{B_N}\E\bigg[\sum_{n\neq m} |\nabla \delta^{\{n,m\}}\phi_T|^2\bigg] \,\leq \,
C\int_{B_{2N}}\E\bigg[\sum_{n\neq m}(\mathds1_{J_n} |\nabla \delta^{\{m\}}\phi_T|^2
+
\mathds1_{J_m} |\nabla \delta^{\{n\}}\phi_T|^2)\bigg]
\\
+\frac{1}{C}\int_{B_{2N}}\E\bigg[\sum_{n\neq m}|\nabla \delta^{\{n,m\}}\phi_T|^2\bigg]
+\frac{C}{N}\int_{B_{2N}}  \E\bigg[\sum_{n\neq m}| \delta^{\{n,m\}}\phi_T|^2\bigg],
\end{multline*}
where all the terms make sense and are finite by Lemma~\ref{lem:finiteT}.
Since all the arguments of the expectations are now stationary, one may get rid of the integrals, which allows one to absorb the second right-hand side term into the left-hand side (choosing $C>0$ big enough) and obtain
\begin{equation*}
\E\bigg[\sum_{n\neq m} |\nabla \delta^{\{n,m\}}\phi_T|^2\bigg] \,\leq \,
C\,\E\bigg[\sum_{n\neq m}\mathds1_{J_n} |\nabla \delta^{\{m\}}\phi_T|^2\bigg]
+\frac{C}{N} \E\bigg[\sum_{n\neq m}| \delta^{\{n,m\}}\phi_T|^2\bigg].
\end{equation*}
We are now in position to conclude: by taking $N\uparrow \infty$ we get rid of the second term of the right-hand side,
so that \eqref{eq:strat-ind-3-1} follows.

\medskip

\step{3} Proof of \eqref{eq:strat-ind-2}.

\noindent The desired estimate is a consequence of \eqref{eq:pureL2apbis}, of \eqref{eq:pureL2ap0} for $j=1$, of \eqref{eq:strat-ind-3}, and of
\begin{multline}\label{eq:boundexpec2c}
\E\bigg[\sum_n\Big|\sum_{m,m\ne n}\nabla \delta^{\{n,m\}}\phi_T\Big|^2\bigg]\\
\lesssim\E\bigg[\sum_{n\neq m}|\nabla\delta^{\{n,m\}}\phi_T|^2\bigg]
+\E\bigg[\Big|\sum_n \nabla \delta^{\{n\}}\phi_T\Big|^2\bigg]
+\E\bigg[\sum_n |\nabla \delta^{\{n\}}\phi_T|^2\bigg].
\end{multline}
The starting point is equation \eqref{eq:eqnsatis00-ter} that we first sum over $m$ for $m\neq n$:
\begin{multline*}
\frac1T\sum_{m,m\neq n}\delta^{\{n,m\}}\phi_T-\nabla\cdot A^{\{n\}}\nabla \sum_{m,m\neq n}\delta^{\{n,m\}}\phi_T\nonumber\\
=\nabla\cdot \sum_{m,m\neq n}C^{\{m\}}\nabla\delta^{\{n\}}\phi_T^{\{m\}}+\nabla\cdot C^{\{n\}}\nabla\sum_{m,m\neq n}\delta^{\{m\}}\phi_T.
\end{multline*}
Following the approach of Step~2, we test this equation in space with $\chi_N \sum_{m,m\neq n}\delta^{\{n,m\}}\phi_T$ and the same cut-off $\chi_N$.
We obtain after summing the estimate over $n$, taking the expectation, and passing to the limit $N\uparrow \infty$,
$$
\E\bigg[\sum_n \Big|\nabla \sum_{m,m\neq n} \delta^{\{n,m\}}\phi_T\Big|^2\bigg] \,\lesssim\,
\E\bigg[\sum_n \Big|\sum_{m,m\neq n} C^{\{m\}}\nabla \delta^{\{n\}}\phi_T^{\{m\}}\Big|^2\bigg]
+
\E\bigg[\sum_n  \Big| C^{\{n\}} \sum_{m,m\neq n}\nabla\delta^{\{m\}}\phi_T\Big|^2\bigg],
$$
and hence, using that $\mathds1_{J_n}\mathds1_{J_m}=0$ for $n\ne m$ by the disjointness of the inclusions,
$$
\E\bigg[\sum_n \Big|\nabla \sum_{m,m\neq n} \delta^{\{n,m\}}\phi_T\Big|^2\bigg] \,\lesssim\,
\E\bigg[\sum_n \sum_{m,m\neq n} \mathds1_{J_{m}}|\nabla \delta^{\{n\}}\phi_T^{\{m\}}|^2\bigg]
+
\E\bigg[\sum_n  \mathds1_{J_{n}} \Big|\nabla \sum_{m,m\neq n}\delta^{\{m\}}\phi_T\Big|^2\bigg].
$$
By the decomposition $\delta^{\{n\}}\phi_T^{\{m\}}=\delta^{\{n\}}\phi_T+\delta^{\{n,m\}}\phi_T$
and the inequality $\sum_m \mathds1_{J_{m}}\le 1$, the first right-hand side term turns into
\begin{equation*}
\E\bigg[\sum_n \sum_{m,m\neq n} \mathds1_{J_{m}}|\nabla \delta^{\{n\}}\phi_T^{\{m\}}|^2\bigg]\,\lesssim\,\E\bigg[\sum_n |\nabla \delta^{\{n\}}\phi_T|^2\bigg]
+
\E\bigg[\sum_{m\neq n}|\nabla \delta^{\{n,m\}}\phi_T|^2\bigg].
\end{equation*}
The desired inequality \eqref{eq:boundexpec2c} then follows from transforming the second right-hand side term as follows: we complete the sum over $m$, use the triangle inequality and 
the inequality $\sum_n \mathds1_{J_{n}}\le 1$, so that
\begin{equation}\label{eq:boundexpec2d}
\E\bigg[\sum_n\mathds1_{J_n}\Big|\sum_{m,m\ne n}\nabla\delta^{\{m\}}\phi_T\Big|^2\bigg]
\,\lesssim\,
\E\bigg[\Big|\sum_m \nabla \delta^{\{m\}}\phi_T\Big|^2\bigg]
+
\E\bigg[\sum_n |\nabla \delta^{\{n\}}\phi_T|^2\bigg].
\end{equation}

\medskip

\step{4} Proof of  \eqref{eq:strat-ind-1}.

\noindent The desired estimate is a consequence of \eqref{eq:pureL2apbis}, of \eqref{eq:pureL2ap0} for $j=1$, of \eqref{eq:strat-ind-2}, and of
\begin{multline}\label{eq:boundexpec2b}
\E\bigg[\Big| \sum_{n\ne m}\nabla\delta^{\{n,m\}}\phi_T\Big|^2\bigg]\\
\lesssim \E\bigg[\sum_n\Big|\sum_{m,m\ne n}\nabla\delta^{\{n,m\}}\phi_T\Big|^2\bigg]+\E\bigg[\Big|\sum_n \nabla \delta^{\{n\}}\phi_T\Big|^2\bigg]+\E\bigg[\sum_n |\nabla \delta^{\{n\}}\phi_T|^2\bigg].
\end{multline}
The starting point is equation \eqref{eq:eqnsatis00}, that we sum over $n\neq m$:
\begin{align*}
\frac1T\sum_{n\ne m}\delta^{\{n,m\}}\phi_T-\nabla\cdot A\nabla \sum_{n\ne m}\delta^{\{n,m\}}\phi_T=2\nabla\cdot \sum_{n\ne m}C^{\{n\}}\nabla\delta^{\{m\}}\phi_T^{\{n\}}.
\end{align*}
Proceeding again as in Step~2, this yields
\begin{align*}
\E\bigg[\Big| \sum_{n\ne m}\nabla\delta^{\{n,m\}}\phi_T\Big|^2\bigg]\lesssim\E\bigg[\Big|\sum_{n\ne m}C^{\{n\}}\nabla\delta^{\{m\}}\phi_T^{\{n\}}\Big|^2\bigg].
\end{align*}
(Note that since each term of the equation is stationary after summation over $n$ and $m$, this coincides with the energy estimate in probability.)
Since the inclusions are disjoint, we are left with
\begin{align}\label{ex:diag}
\E\bigg[\Big| \sum_{n\ne m}\nabla\delta^{\{n,m\}}\phi_T\Big|^2\bigg]\,\lesssim\,\E\bigg[\sum_n\mathds1_{J_n}\Big|\sum_{m,m\ne n}\nabla\delta^{\{m\}}\phi_T^{\{n\}}\Big|^2\bigg].
\end{align}
By the decomposition $\delta^{\{m\}}\phi_T^{\{n\}}=\delta^{\{m\}}\phi_T+\delta^{\{n,m\}}\phi_T$, this turns into
$$
\E\bigg[\Big| \sum_{n\ne m}\nabla\delta^{\{n,m\}}\phi_T\Big|^2\bigg]
\,\lesssim\,
\E\bigg[\sum_n\mathds1_{J_n}\Big|\sum_{m,m\ne n}\nabla\delta^{\{m\}}\phi_T\Big|^2\bigg]
+
\E\bigg[\sum_{n}\Big|\sum_{m,m\neq n}\nabla \delta^{\{n,m\}}\phi_T\Big|^2\bigg],
$$
and the desired inequality \eqref{eq:boundexpec2b} follows from \eqref{eq:boundexpec2d}.
\end{proof}
This lemma implies that $p\mapsto A_{\hom}^{(p)}$ is $C^{2,1}$ on $[0,1]$.
As already mentioned, this lemma illustrates the induction argument we shall use in the proofs of  Lemma~\ref{lem:aprioriproba} and of  Proposition~\ref{prop:apimproved}.

In the proof of Lemma~\ref{lem:aprioriproba}, we shall always consider the equation for $\delta^{F}\phi_T$ with coefficients $A^{F}$, so that the right-hand side will only involve unperturbed correctors, and then sum over $F$ the resulting energy estimate (first localized in space).

The proof of Proposition~\ref{prop:apimproved} is more involved.
Call $P(j,k)$ property~\eqref{eq:fam-ener}. 
We make a first induction on $k$ and then on $j$.
Note that at step $k$ there are $k$ different forms of the equation satisfied by $\delta^F\phi_T$ (for $|F|=k$).
By Lemma~\ref{lem:aprioriproba}, $P(k,k)$ holds for all $k\in \N$.
Then, given $P(k+1,k+1)$ and $P(i,l)$ for all $i\le l\le k$, we shall prove 
$P(k+1-j,k+1)$ iteratively starting with $j=1$.
Indeed, $P(k+1-j,k+1)$ will follow from $P(k+1-j',k+1)$ for $j'< j$ and $P(j',l)$ for all $j'\le l\le k$,
using the form of the equation where the coefficients are $k+1-j$ times perturbed.
The last step $P(0,k+1)$ is similar to Step~4 in the proof above and relies on the equation with the unperturbed operator.

To be more precise, in the case of disjoint inclusions, the family of equations is as follows (see Lemma~\ref{lem:eqsatif} for non-necessarily disjoint inclusions): for all disjoint subsets $F,G,H\subset\N$, with $F,G$ finite, $F\cup G\ne\varnothing$,
\begin{equation*}
\frac1T\delta_\xi^{F\cup G}\phi_T^H-\nabla\cdot A^{F\cup H}\nabla\delta_\xi^{F\cup G}\phi_T^H\,=\,\sum_{n\in F}\nabla\cdot C^{\{n\}}\nabla\delta_\xi^{(F\setminus\{n\})\cup G}\phi_T^H+\sum_{n\in G}\nabla\cdot C^{\{n\}}\nabla\delta_\xi^{F\cup(G\setminus\{n\})}\phi_T^{H\cup\{n\}}.
\end{equation*}

\section{Auxiliary results and improved energy estimates} \label{sec:proofs}

\subsection{Perturbed corrector equations}

We start by making precise the equations satisfied by the map $\delta_\xi^F\phi_T^G$ for disjoint subsets $F,G\subset\N$, which will be used abundantly in this paper.
The proof of this lemma (like many other auxiliary results of this paper) is purely combinatorial.
\begin{lem}\label{lem:eqsatif}
For all disjoint subsets $F,H\subset \N$, with $F$ finite, $F\ne\varnothing$, and for all $T>0$, the map $\delta_\xi^{F}\phi_T^H$ 
defined in \eqref{eq:def-diff} satisfies the following two equations (weakly) in $\R^d$:
\begin{align}\label{eq:satif1}
\frac1T\delta_\xi^F\phi_T^H-\nabla\cdot A^{F\cup H}\nabla\delta_\xi^F\phi_T^H=\sum_{S\subset F}(-1)^{|S|+1}\nabla\cdot C_{S\|H\cup F\setminus S}\nabla\delta_\xi^{F\setminus S}\phi_T^H,
\end{align}
and 
\begin{align}\label{eq:satif2}
\frac1T\delta_\xi^F\phi_T^H-\nabla\cdot A^{H}\nabla\delta_\xi^F\phi_T^H=\sum_{S\subset F}(-1)^{|S|+1}\nabla\cdot C_{S\|H}\nabla\delta_\xi^{F\setminus S}\phi_T^{S\cup H}.
\end{align}
More generally, for all disjoint subsets $F,G,H\subset \N$, with $F,G$ finite, $F\cup G\ne\varnothing$, and for all $T>0$, the map $\delta_\xi^{F\cup G}\phi_T^H$ 
defined in \eqref{eq:def-diff} satisfies the following equation (weakly) in $\R^d$:
\begin{align}\label{eq:satif3}
&\frac1T\delta_\xi^{F\cup G}\phi_T^H-\nabla\cdot A^{F\cup H}\nabla\delta_\xi^{F\cup G}\phi_T^H\\
&\hspace{2cm}=\sum_{S\subset F}\sum_{U\subset G}(-1)^{|S|+|U|+1}\nabla\cdot C_{S\cup U\|H\cup (F\setminus S)}\nabla\delta_\xi^{(F\setminus S)\cup (G\setminus U)}\phi_T^{U\cup H}.\nonumber
\end{align}
\end{lem}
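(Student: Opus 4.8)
\textbf{Proof strategy for Lemma~\ref{lem:eqsatif}.}
The plan is to derive all three equations by the same mechanism: take the defining equations~\eqref{eq:modif-corr} for the various $\phi_T^{G\cup H}$, form the alternating sum~\eqref{eq:def-diff} that defines the difference operator, and then carefully collect the quadratic coefficient terms using the inclusion-exclusion identities~\eqref{eq:excl3.1}--\eqref{eq:excl3.3}. I would first treat~\eqref{eq:satif3}, the most general statement, since~\eqref{eq:satif1} is the special case $G=\varnothing$ and~\eqref{eq:satif2} is obtained from it by a change of "perturbation base-point" via~\eqref{eq:defdeltaxi1}; alternatively~\eqref{eq:satif2} can be derived directly in the same way and one checks compatibility at the end.

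The concrete starting point: for each $G'\subset F\cup G$ and the fixed $H$, the corrector $\phi_T^{G'\cup H}$ solves $\frac1T\phi_T^{G'\cup H}-\nabla\cdot A^{G'\cup H}(\nabla\phi_T^{G'\cup H}+\xi)=0$, where $A^{G'\cup H}=A+C^{G'\cup H}$. The massive term and the $A$-part are linear in the corrector, so applying $\delta^{F\cup G}=\prod_{n\in F\cup G}\delta^{\{n\}}$ and recalling from the discussion after~\eqref{eq:defdeltaxi} that $\nabla\delta^{F\cup G}_\xi\phi_T^H=\nabla\delta^{F\cup G}\phi_T^H$ when $F\cup G\ne\varnothing$, these two pieces reproduce exactly $\frac1T\delta_\xi^{F\cup G}\phi_T^H-\nabla\cdot A\nabla\delta_\xi^{F\cup G}\phi_T^H$; the entire content of the lemma is therefore the bookkeeping of the nonlinear term $\nabla\cdot C^{G'\cup H}(\nabla\phi_T^{G'\cup H}+\xi)$ under the alternating sum. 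Writing $C^{G'\cup H}=C^{(F\cup G)\cap G'}_{\|\varnothing}$-type expressions and expanding $\nabla\phi_T^{G'\cup H}+\xi=\sum_{S'\subset G'}\nabla\delta_\xi^{S'}\phi_T^H$ via~\eqref{eq:defdeltaxi1}, one obtains a double sum over subsets, and the signs $(-1)^{|(F\cup G)\setminus G'|}$ from $\delta^{F\cup G}$ combine with the inclusion-exclusion signs. The key algebraic step is to recognize, after swapping the order of summation, that the inner sums over the "removed" indices telescope according to~\eqref{eq:excl3.2}--\eqref{eq:excl3.3}, which is precisely what converts the raw expansion into the $C_{S\cup U\|H\cup(F\setminus S)}$ coefficients appearing on the right-hand side of~\eqref{eq:satif3}; the restriction operator $\|H\cup(F\setminus S)$ encodes that contributions over regions already covered by inclusions indexed in $F\setminus S$ or $H$ cancel. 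To get the operator $A^{F\cup H}$ on the left rather than $A$, one moves the terms with $S=\varnothing$ (for the $F$-variables) from the right-hand side to the left: these are exactly $\nabla\cdot C^{F}_{\|H}\nabla\delta_\xi^{F\cup G}\phi_T^H$-type terms whose sum over $F$-subsets reconstitutes $C^{F\cup H}$ by~\eqref{eq:excl3.2}, upgrading $A$ to $A^{F\cup H}=A+C^{F\cup H}$.

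I expect the main obstacle to be purely combinatorial: keeping the three nested families of subsets ($G'\subset F\cup G$ from the difference operator, $S'\subset G'$ from the corrector expansion, and the further splitting needed to apply~\eqref{eq:excl3.3}) correctly aligned so that the signs and the restriction sets come out exactly as stated, and in particular verifying that all the "spurious" terms — those supported on $J_n$ for $n$ in the wrong index set — genuinely cancel rather than merely being reorganized. A clean way to manage this is to introduce, for fixed $F,G,H$, the shorthand that every relevant quadratic term is indexed by a pair $(S,U)$ with $S\subset F$, $U\subset G$ together with the region $J_{S\cup U}\setminus(J_H\cup J_{F\setminus S})$, prove the identity at the level of these indexed terms, and only then recognize the collapsed sums as the $C_{\cdot\|\cdot}$ objects; this reduces the whole proof to a finite sequence of applications of~\eqref{eq:excl3.2} and~\eqref{eq:excl3.3} plus the binomial identity $\sum_{l}\binom{m}{l}(-1)^l=0$ used to discard empty-difference contributions. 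Finally, to pass from~\eqref{eq:satif1} to~\eqref{eq:satif2} I would use~\eqref{eq:defdeltaxi1} in the form $\nabla\delta_\xi^{F\setminus S}\phi_T^{S\cup H}=\sum_{S''\subset S}\nabla\delta_\xi^{(F\setminus S)\cup S''}\phi_T^H$, substitute into the right-hand side of~\eqref{eq:satif1}, and check by another inclusion-exclusion collapse that the two right-hand sides agree and that the left-hand operators $A^{F\cup H}$ and $A^H$ are reconciled by the very same $S=\varnothing$ shift in reverse; this last verification is routine once the general identity~\eqref{eq:satif3} is in place.
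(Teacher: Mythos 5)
Your plan is essentially the paper's own argument: apply the alternating sum defining $\delta^{F\cup G}$ to the family of corrector equations so that only the terms $\nabla\cdot C^{G'\cup H}(\nabla\phi_T^{G'\cup H}+\xi)$ require bookkeeping, then regroup them using \eqref{eq:defdeltaxi1}, the inclusion--exclusion identities \eqref{eq:excl3.1}--\eqref{eq:excl3.3} and the binomial identity, absorbing the reconstituted $C^{F\cup H}$ into the operator to reach \eqref{eq:satif3} with the restricted coefficients $C_{S\cup U\|H\cup(F\setminus S)}$. The paper organizes the very same computation only slightly differently --- it first proves \eqref{eq:satif2} by a direct collapse via \eqref{eq:excl3.1}, isolates the auxiliary identity $(-1)^{|K|}C_K=\sum_{L\subset K}(-1)^{|K\setminus L|}C^{K\setminus L}_{\|L}$, and then upgrades to \eqref{eq:satif3} by showing the two right-hand sides differ exactly by the absorbed term --- and your closing passage from \eqref{eq:satif1} to \eqref{eq:satif2} is not needed, since \eqref{eq:satif2} is simply the case $F=\varnothing$ of \eqref{eq:satif3}, just as \eqref{eq:satif1} is the case $G=\varnothing$.
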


\begin{proof}
Let $T>0$ be fixed. 
Without loss of generality we may assume  that $H=\varnothing$. We first prove~\eqref{eq:satif2}, from which we shall then deduce~\eqref{eq:satif3}.
Equation~\eqref{eq:satif1} is a particular case of~\eqref{eq:satif3} with $G=\varnothing$.

\medskip
\step1 Proof of~\eqref{eq:satif2}.\\
Let $F\subset \N$ be a finite nonempty subset. By definition \eqref{eq:defdeltaxi} of $\delta_\xi^F\phi_T$ and the inclusion-exclusion identity~\eqref{eq:excl3.1}, we have
\begin{align*}
\frac1T\delta_\xi^F\phi_T-\nabla\cdot A\nabla\delta_\xi^F\phi_T&=\sum_{H\subset F}(-1)^{|F\setminus H|}\nabla\cdot C^H(\nabla\phi_T^{ H}+\xi)\\
&=\sum_{H\subset F}\sum_{S\subset H}(-1)^{|F\setminus H|}(-1)^{|S|+1}\nabla\cdot C_{S}(\nabla\phi_T^{H}+\xi)\\
&=\sum_{S\subset F}(-1)^{|S|+1}\nabla\cdot C_{S}\sum_{H\subset F\setminus S}(-1)^{|(F\setminus S)\setminus H|}(\nabla\phi_T^{ H\cup S}+\xi).
\end{align*}
Recognizing the definition of $\delta_\xi^{F\setminus S}\phi_T^S$, this yields
\begin{align*}
\frac1T\delta_\xi^F\phi_T-\nabla\cdot A\nabla\delta_\xi^F\phi_T&=\sum_{S\subset F}(-1)^{|S|+1}\nabla\cdot C_{S}\nabla\delta_\xi^{F\setminus S}\phi_T^{ S},
\end{align*}
and proves the validity of equation~\eqref{eq:satif2}.

\medskip
\step2 A combinatorial identity.\\
For any finite subsets $K,L,M\subset\N$ (with $K$ and $L$ non-empty), we use the following notation:
\[C^K_{L\|M}:=(A'-A)\mathds1_{J^K_{L\|M}},\qquad J^K_{L\|M}=\bigg(\bigcup_{n\in K}J_n\bigg)\cap\bigg(\bigcap_{n\in L}J_n\bigg)\setminus\bigg(\bigcup_{n\in M}J_n\bigg).\]
In this proof, and in this proof only, when $K$ or $L$ is empty, we further set $J^K_{\varnothing\|M}=J^K_{\|M}$ and $J^\varnothing_{L\|M}=0$. We now check the following general, purely combinatorial identity: for any finite disjoint subsets $U,F\subset\N$ and for any $S\subsetneq F$,
\begin{align}\label{eq:firstid+}
(-1)^{|F\setminus S|}C_{(F\setminus S)\cup U\|S}=\sum_{H\subsetneq F\setminus S}(-1)^{|F\setminus (H\cup S)|}C^{F\setminus (H\cup S)}_{U\|H\cup S}.
\end{align}
It is obviously enough to prove this identity for $U=S=\varnothing$.
Setting $G:=F\setminus S$, we need to prove that, for any finite subset $G\subset \N$,
\begin{align}\label{eq:firstids}
(-1)^{|G|}C_{G}=\sum_{H\subset G}(-1)^{|G\setminus H|}C^{G\setminus H}_{\|H}.
\end{align}
Using the inclusion-exclusion identity \eqref{eq:excl3.2} in form of  $C^{G\setminus H}_{\|H}=\sum_{S\subset G\setminus H}(-1)^{|S|+1}C_{S\|H}$, we have
\begin{align*}
\sum_{H\subset G}(-1)^{|G\setminus H|}C^{G\setminus H}_{\|H}&=\sum_{H\subset G}(-1)^{|G\setminus H|}\sum_{S\subset G\setminus H}(-1)^{|S|+1}C_{S\|H}=\sum_{S\subset G}(-1)^{|S|+1}\sum_{H\subset G\setminus S}(-1)^{|G\setminus H|}C_{S\|H}.
\end{align*}
Using then \eqref{eq:excl3.3} in form of $C_{S\|H}=\mathds1_{S\ne\varnothing}\sum_{U\subset H}(-1)^{|U|}C_{S\cup U}$, this turns into
\begin{align*}
\sum_{H\subset G}(-1)^{|G\setminus H|}C^{G\setminus H}_{\|H}&=\sum_{S\subset G\atop S\ne\varnothing}(-1)^{|S|+1}\sum_{H\subset G\setminus S}(-1)^{|G\setminus H|}\sum_{U\subset H}(-1)^{|U|}C_{S\cup U}\\
&=\sum_{S\subset G\atop S\ne\varnothing}(-1)^{|S|+1}\sum_{U\subset G\setminus S}(-1)^{|U|}C_{S\cup U}\sum_{H\subset G\setminus (S\cup U)}(-1)^{|G\setminus (H\cup U)|}.
\end{align*}
Using twice the binomial identity in the form $\sum_{J\subset K}(-1)^{|K\setminus J|}=\mathds1_{K=\varnothing}$, this reduces to
\begin{align*}
\sum_{H\subset G}(-1)^{|G\setminus H|}C^{G\setminus H}_{\|H}&=\sum_{S\subset G\atop S\ne\varnothing}(-1)^{|S|+1}\sum_{U\subset G\setminus S}(-1)^{|U|+|S|}C_{S\cup U}\mathds1_{U=G\setminus S}\\
&=(-1)^{|G|}C_G\sum_{S\subset G\atop S\ne\varnothing}(-1)^{|S|+1}=(-1)^{|G|}C_G,
\end{align*}
and identity~\eqref{eq:firstids} is proven.

\medskip
\step3 Proof of~\eqref{eq:satif3}.\\
Let $F,G\subset \N$ be two fixed disjoint finite subsets, with $F\cup G\ne\varnothing$. Equation~\eqref{eq:satif3} (with $H=\varnothing$) is obviously a direct corollary of~\eqref{eq:satif2} (with $F$ replaced by $F\cup G$ and with $H=\varnothing$) provided we prove the identity
\begin{align}\label{eq:satiffinpr}
- C^F\nabla\delta_\xi^{F\cup G}\phi_T+A_{F,G}=B_{F,G},
\end{align}
where we have defined
\begin{align*}
A_{F,G}:=&\sum_{S\subset F\cup G}(-1)^{|S|+1} C_{S}\nabla\delta_\xi^{(F\cup G)\setminus S}\phi_T^{S}\\
=&
\sum_{S\subset F}\sum_{U\subset G}(-1)^{|S|+|U|+1} C_{S\cup U}\nabla\delta_\xi^{(F\setminus S)\cup (G\setminus U)}\phi_T^{S\cup U},\\
B_{F,G}:=&\sum_{S\subset F}\sum_{U\subset G}(-1)^{|S|+|U|+1} C_{S\cup U\|F\setminus S}\nabla\delta_\xi^{(F\setminus S)\cup(G\setminus U)}\phi_T^U.
\end{align*}
Let us first rewrite $A_{F,G}$ in a more suitable way.
We appeal to the definition~\eqref{eq:defdeltaxi} of $\nabla \delta_\xi^{(F\setminus S)\cup (G\setminus U)}\phi_T^{S\cup U}$, then make the change of variables $H\cup S \leadsto H$ and $U\cup W \leadsto W$, and conclude by using
\eqref{eq:excl3.1}:
\begin{align}
A_{F,G}&=\sum_{S\subset F}\sum_{U\subset G}(-1)^{|S|+|U|+1}C_{S\cup U}\sum_{H\subset F\setminus S}\sum_{W\subset G\setminus U}(-1)^{|G\setminus(U\cup W)|}(-1)^{|F\setminus (H\cup S)|} (\nabla\phi_T^{S\cup U\cup H\cup W}+\xi)\nonumber\\
&=\sum_{H\subset F}\sum_{W\subset G}(-1)^{|G\setminus W|}(-1)^{|F\setminus H|}\sum_{S\subset H}\sum_{U\subset W}(-1)^{|S|+|U|+1}C_{S\cup U} (\nabla\phi_T^{H\cup W}+\xi)\nonumber\\
&\stackrel{\eqref{eq:excl3.1}}{=}\sum_{H\subset F}\sum_{W\subset G}(-1)^{|G\setminus W|+|F\setminus H|}C^{H\cup W} (\nabla\phi_T^{H\cup W}+\xi).\label{eq:redefAfg}
\end{align}
We now treat $B_{F,G}$. The change of variables $F\setminus S\leadsto S$ yields
\begin{align}
B_{F,G}&=\sum_{U\subset G}(-1)^{|U|+1}\sum_{S\subset F}(-1)^{|F\setminus S|}C_{(F\setminus S)\cup U\| S}\nabla\delta_\xi^{ S\cup(G\setminus U)}\phi_T^U\label{eq:b1b2FGdef}\\
&=\underbrace{\sum_{U\subset G}(-1)^{|U|+1}\sum_{S\subsetneq F}(-1)^{|F\setminus S|}C_{(F\setminus S)\cup U\| S}\nabla\delta_\xi^{ S\cup(G\setminus U)}\phi_T^U}_{=:B^1_{F,G}}+\underbrace{\sum_{U\subset G}(-1)^{|U|+1}C_{U\| F}\nabla\delta_\xi^{ F\cup(G\setminus U)}\phi_T^U}_{=:B^2_{F,G}}.\nonumber
\end{align}
We treat both terms $B^1_{F,G}$ and $B^2_{F,G}$ separately.
The combinatorial identity~\eqref{eq:firstid+} and the change of variables $H\cup S\leadsto H$ yield
\begin{align*}
B^1_{F,G}&=\sum_{U\subset G}(-1)^{|U|+1}\sum_{S\subsetneq F} \sum_{H\subsetneq F\setminus S}(-1)^{|F\setminus(H\cup S)|}C^{F\setminus (H\cup S)}_{U\|H\cup S}\nabla\delta_\xi^{ S\cup(G\setminus U)}\phi_T^U\\
&=\sum_{U\subset G}(-1)^{|U|+1}\sum_{H\subsetneq F}(-1)^{|F\setminus H|}C^{F\setminus H}_{U\|H}\sum_{S\subset H}\nabla\delta_\xi^{ S\cup(G\setminus U)}\phi_T^U.
\end{align*}
By the identity \eqref{eq:defdeltaxi1} in the form $\sum_{S\subset H}\nabla\delta_\xi^{ S\cup(G\setminus U)}\phi_T^U=\nabla\delta_\xi^{G\setminus U}\phi_T^{H\cup U}$, this turns into
\begin{align*}
B_{F,G}^1&=\sum_{U\subset G}(-1)^{|U|+1}\sum_{H\subsetneq F}(-1)^{|F\setminus H|}C^{F\setminus H}_{U\|H}\nabla\delta_\xi^{G\setminus U}\phi_T^{H\cup U},
\end{align*}
and thus, by definition~\eqref{eq:def-diff}--\eqref{eq:defdeltaxi0} of $\delta_\xi^{G\setminus U}$ and the change of variables $U\cup W\leadsto W$,
\begin{align*}
B^1_{F,G}&=\sum_{U\subset G}(-1)^{|U|+1}\sum_{H\subsetneq F}(-1)^{|F\setminus H|}\sum_{W\subset G\setminus U}(-1)^{|G\setminus(U\cup W)}C^{F\setminus H}_{U\|H}(\nabla\phi_T^{H\cup U\cup W}+\xi)\\
&=\sum_{H\subsetneq F}(-1)^{|F\setminus H|}\sum_{W\subset G}(-1)^{|G\setminus W|}\sum_{U\subset W}(-1)^{|U|+1}C^{F\setminus H}_{U\|H}(\nabla\phi_T^{H\cup W}+\xi).
\end{align*}
Noting that, by the usual inclusion-exclusion formula,
\begin{align*}
\sum_{U\subset W}(-1)^{|U|+1}C^{F\setminus H}_{U\|H}&=-C^{F\setminus H}_{\|H}+C^{F\setminus H}_{\|H}\sum_{U\subset W\atop U\ne\varnothing}(-1)^{|U|+1}\mathds1_{J_U}\\
&=-C^{F\setminus H}_{\|H}+C^{F\setminus H}_{\|H}\mathds1_{J^W}=-C^{F\setminus H}_{\|H\cup W},
\end{align*}
we conclude that
\begin{align}\label{eq:redefB1fg}
B^1_{F,G}&=-\sum_{H\subset F}(-1)^{|F\setminus H|}\sum_{W\subset G}(-1)^{|G\setminus W|}C^{F\setminus H}_{\|H\cup W}(\nabla\phi_T^{H\cup W}+\xi).
\end{align}

For the second term $B^2_{F,G}$ in~\eqref{eq:b1b2FGdef}, we argue as in Step~1, and obtain
\begin{align}
B^2_{F,G}&=\sum_{H\subset F}(-1)^{|F\setminus H|}\sum_{U\subset G\atop U\ne\varnothing}(-1)^{|U|+1} C_{ U\| F}\sum_{W\subset G\setminus U}(-1)^{|G\setminus(U\cup W)|}(\nabla\phi_T^{U\cup W\cup H}+\xi)\nonumber\\
&=\sum_{H\subset F}(-1)^{|F\setminus H|}\sum_{W\subset G}(-1)^{|G\setminus W|}\sum_{U\subset W\atop U\ne\varnothing}(-1)^{|U|+1} C_{ U\| F}(\nabla\phi_T^{W\cup H}+\xi)\nonumber\\
&=\sum_{H\subset F}\sum_{W\subset G}(-1)^{|F\setminus H|+|G\setminus W|}C^W_{\| F}(\nabla\phi_T^{W\cup H}+\xi).\label{eq:redefB2fg}
\end{align}
Combining~\eqref{eq:redefAfg}, \eqref{eq:b1b2FGdef}, \eqref{eq:redefB1fg} and~\eqref{eq:redefB2fg} then yields
\begin{align*}
A_{F,G}-B_{F,G}&=\sum_{H\subset F}\sum_{W\subset G}(-1)^{|G\setminus W|+|F\setminus H|}\left(C^{H\cup W}+C^{F\setminus H}_{\|H\cup W}-C^W_{\|F}\right)(\nabla\phi_T^{H\cup W}+\xi),
\end{align*}
which proves~\eqref{eq:satiffinpr} by definition~\eqref{eq:def-diff}--\eqref{eq:defdeltaxi0} of $\delta_\xi^{F\cup G}\phi_T$.
\end{proof}

\subsection{Basic energy estimates}
The advantage of the massive term approximations $\phi_T^F$ is to localize the dependence with respect to the coefficients to a ball of radius $\sqrt{T}$ (up to exponentially small corrections). While this regularization in $T$ allows us to get rid of convergence issues at infinity, convergence problems may also occur at short distances because of high concentrations of the point process $\rho$. In order to avoid such issues, we further assume that $\rho$ has all its moments finite. The assumption that $T<\infty$ and the finite moments assumption are crucial to make rigorous all subsequent formal computations. 
Under these assumptions we shall prove some basic energy estimates that are uniform with respect to the regularization parameter $T$ and to the moments bounds on $\rho$; these estimates will be substantially improved in next section.

\begin{lem}\label{lem:finiteT}
Assume that $\E[\rho(Q)^s]<\infty$ for all $s\ge1$. Then, for all $L\sim1$,  $k\ge0$, $H\subset\N$, $s\ge1$, and $T>0$, the following estimate holds
\[\E\bigg[\bigg(\sum_{|F|=k}T^{-\frac12}(\delta_\xi^F\phi_T^H)_L(0)+(\nabla\delta_\xi^F\phi_T^H)_L(0)\bigg)^s\bigg]\le C_T^{sk}\E[\rho(B_R)^{sk}]<\infty
,\]
for some constant $C_{T}\sim_{T} 1$,
where $\delta_\xi^F\phi_T^H$ is as in \eqref{eq:defdeltaxi0}, and where we write $(f)_L(x):=(\fint_{B_L(x)}|f|^2)^{\frac12}$ for the local quadratic average of any map $f$.
\qed
\end{lem}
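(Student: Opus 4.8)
\emph{Plan.} The idea is to bound each summand by an exponentially localized quantity and then sum over the $k$-uplets $F$. First I would dispose of two reductions: if $F\cap H\ne\varnothing$ then $\delta_\xi^F\phi_T^H=0$ (the factor $\delta^{\{n\}}$ with $n\in F\cap H$ annihilates $\phi_T^H$), so one may assume $F\cap H=\varnothing$; and the case $k=0$ follows at once from~\eqref{eq:modif-corr-estim} (with a constant depending on $T$ and $L\sim1$). Thus I take $k\ge1$, in which case $\delta_\xi^F\phi_T^H=\delta^F\phi_T^H$ by the binomial identity $\sum_l\binom{|F|}{l}(-1)^{|F|-l}=0$.

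\emph{Step 1: a localization estimate.} I would prove, by induction on $k=|F|\ge1$, that there are $c\sim1$ and $C_T\sim_T1$ such that, for all disjoint $F,H$ with $|F|=k$ and all $x\in\R^d$, almost surely,
\[
\Big(\fint_{B_L(x)}\big(T^{-1}|\delta_\xi^F\phi_T^H|^2+|\nabla\delta_\xi^F\phi_T^H|^2\big)\Big)^{1/2}\le (C_T)^k\prod_{n\in F}e^{-c|q_n-x|/(k\sqrt T)}.
\]
By Lemma~\ref{lem:eqsatif} (equation~\eqref{eq:satif1}), $u:=\delta_\xi^F\phi_T^H$ solves the massive equation $\frac1Tu-\nabla\cdot A^{F\cup H}\nabla u=\nabla\cdot g$ with $g=\sum_{\varnothing\ne S\subset F}(-1)^{|S|+1}C_{S\|H\cup(F\setminus S)}\nabla\delta_\xi^{F\setminus S}\phi_T^H$; each $C_{S\|\cdots}$ is supported in $J_S=\bigcap_{n\in S}J_n\subset B_R(q_n)$ for every $n\in S$, and vanishes identically once $|S|>\Gamma$. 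Testing the equation against $u$ times an exponential weight $e^{2\eta w}$, with $w$ a $(\eta\sqrt T)^{-1}$-Lipschitz function comparable to $\operatorname{dist}(\cdot,\supp g)/\sqrt T$ and $\eta\sim_\lambda1$ small enough that the massive term controls the cross terms produced by $\nabla w$ (a standard exponentially weighted energy estimate), gives exponential decay of $u$ away from $\supp g\subset\bigcup_{n\in F}B_R(q_n)$ at rate $\sim1/\sqrt T$, with prefactor a weighted energy of $g$ near its support. Inserting the inductive bound for $\nabla\delta_\xi^{F\setminus S}\phi_T^H$, using that a nonempty $J_S$ forces $|q_n-q_m|\lesssim R$ for $n,m\in S$ (so the $|S|\ge2$ contributions are absorbed into the $|S|=1$ ones, up to enlarging $C_T$), and combining exponents via the triangle inequality (which costs a harmless factor $1/k$ in the rate) closes the induction. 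The base case $k=1$ is~\eqref{eq:modif-corr-estim} together with the energy estimate for the massive equation whose right-hand side is supported in $J_n\subset B_R(q_n)$.

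\emph{Step 2: summation.} Evaluating Step~1 at $x=0$ and using that a $k$-uplet $F$ consists of \emph{distinct} integers, the quantity to estimate is bounded a.s. by $(C_T)^kS_k^{\,k}$ with $S_k:=\sum_n e^{-c|q_n|/(k\sqrt T)}$. Splitting $\R^d$ into the annuli $B_{(j+1)R}\setminus B_{jR}$ gives $S_k\le\sum_{j\ge0}e^{-cjR/(k\sqrt T)}\rho(B_{(j+1)R})$; Minkowski's inequality for the $sk$-th moment then yields $\E[S_k^{\,sk}]^{1/(sk)}\le\sum_{j\ge0}e^{-cjR/(k\sqrt T)}\,\E[\rho(B_{(j+1)R})^{sk}]^{1/(sk)}$, and covering $B_{(j+1)R}$ by $\lesssim(j+1)^d$ unit-scale balls together with stationarity gives $\E[\rho(B_{(j+1)R})^{sk}]^{1/(sk)}\lesssim(j+1)^d\,\E[\rho(B_R)^{sk}]^{1/(sk)}$. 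The remaining series in $j$ is geometric-times-polynomial, hence summable for $T<\infty$, so $\E[S_k^{\,sk}]\le(C_T)^{sk}\E[\rho(B_R)^{sk}]$ after renaming constants (the summation constant depends on $T$, and mildly on $k$, which is immaterial for the later uses of the lemma). Raising the pointwise bound to the power $s$ and taking the expectation gives the announced estimate, finite since $\E[\rho(Q)^{sk}]<\infty$ forces $\E[\rho(B_R)^{sk}]<\infty$.

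\emph{Main obstacle.} The delicate point is Step~1: propagating exponential decay through the recursion of Lemma~\ref{lem:eqsatif}, which couples $\delta_\xi^F\phi_T^H$ to lower-order difference operators living on the various intersection sets $J_S$. One must simultaneously track the decay rate and the geometry of these supports; the benign degradation $c\to c/k$ of the rate along the induction, the possibility that several inclusions be nearly coincident — precisely what makes the moments of $\rho$ enter the final bound — and intersections of degree up to $\Gamma$ (controlled by $C_S\equiv0$ for $|S|>\Gamma$) are where the bookkeeping requires care. When the inclusions $J_n$ are disjoint the right-hand side of the equation collapses to $\sum_{n\in F}\nabla\cdot C^{\{n\}}\nabla\delta_\xi^{F\setminus\{n\}}\phi_T^H$ and the argument simplifies considerably.
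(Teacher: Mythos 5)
Your starting point is the paper's: both proofs test equation~\eqref{eq:satif1} against the solution times an exponential weight $e^{-c|\cdot-z|/\sqrt{T}}$, induct on $|F|$, and finally convert sums over points of $\rho$ into moments of $\rho(B_R)$ (your annuli-plus-Minkowski computation in Step~2 is a harmless variant of the paper's comparison of sums to integrals). The gap is the inductive claim of Step~1. If you insert the level-$(k-1)$ hypothesis, a product bound centered at $q_n$ with rate $c_{k-1}$, into the weighted estimate and peel off the inclusion $n$, the exponent you control is $c\,|q_n-z|+c_{k-1}\sum_{m\in F\setminus\{n\}}|q_m-q_n|$, where $c$ is the fixed small rate allowed by the absorption in the weighted Caccioppoli argument. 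After the triangle inequality $|q_m-q_n|\ge|q_m-z|-|q_n-z|$, the coefficient of $|q_n-z|$ is $c-(k-1)c_{k-1}$; with your choice $c_{k-1}=c/(k-1)$ this vanishes, so no decay in the peeled variable survives and the rate $c/k$ cannot be propagated. The loss compounds at every level (a workable choice is a geometrically decaying rate, e.g. $c_k\sim c\,4^{-k}$), so "a harmless factor $1/k$ in the rate" is not what the induction produces; that factor is only legitimate if you first prove a chain-type bound at rate $c$ and convert to a product a posteriori. In addition, for a single fixed $F$ the sum over which inclusion (or which $S\subset F$) to peel has up to $k$ terms per level, and this multiplicity is not controlled by the impenetrability parameter $\Gamma$; bounding each term by the common product and summing accumulates a factor of order $k!$ along the induction. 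Either way your Step~1 constant is $k$-dependent, so the argument proves a weaker statement than the lemma's $C_T^{sk}\,\E[\rho(B_R)^{sk}]$ with $C_T\sim_T1$ (though, as you note, a $k$-dependent constant would still suffice for the lemma's later uses, which only need finiteness at fixed $T$ and $k$).

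The paper avoids both losses by running the induction on the quantity already summed over $|F|=k$ and by keeping the chain structure instead of a product: one bounds $\sum_{|F|=k}\big(T^{-1/2}(\delta_\xi^F\phi_T)_L(z)+(\nabla\delta_\xi^F\phi_T)_L(z)\big)$ by $C_T\sum_n e^{-c|q_n-z|/\sqrt{T}}\sum_{|F|\le k-1}(\nabla\delta_\xi^F\phi_T)_R(q_n)$, where, after re-indexing the sum in terms of $F\setminus S$, the number of peeled sets $S$ attached to a given point $n$ is bounded by $2^{\Gamma-1}$ thanks to assumption~\eqref{eq:boundrho}; hence the per-step constant is independent of $k$ and the rate stays $c$. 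Iterating yields path sums $\sum_{n_1,\ldots,n_j}e^{-c|q_{n_1}-z|/\sqrt{T}}\prod_{i=2}^{j}e^{-c|q_{n_{i-1}}-q_{n_i}|/\sqrt{T}}$, whose $s$-th moments are bounded by $C_T^{sj}\,\E[\rho(B_R)^{sj}]$ exactly as in your Step~2. Your two reductions (the case $F\cap H\ne\varnothing$ and the identity $\delta_\xi^F=\delta^F$ for $F\ne\varnothing$) and your moment computation are fine; it is the per-$F$, product-form organization of the induction that needs to be replaced by (or derived from) the summed, chain-form estimate.
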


\begin{proof}
Since our argument is deterministic (we take the expectation only at the very end), we can assume w.l.o.g. that $H=\varnothing$.
By~\eqref{eq:satif1} in Lemma~\ref{lem:eqsatif},  $\delta_\xi^F\phi_T$ satisfies
\[\frac1T\delta_\xi^F\phi_T-\nabla\cdot A^{F}\nabla\delta_\xi^F\phi_T=\sum_{S\subset F}(-1)^{|S|+1}\nabla\cdot C_{S\| F\setminus S}\nabla\delta_\xi^{F\setminus S}\phi_T.\]
Let $z\in \R^d$ and set $\eta_T^z(x):=e^{-c|x-z|/\sqrt T}$ with $c>0$ to be chosen later.
Testing this equation with $\eta_T^z\delta_\xi^F\phi_T$ in the whole space, and noting that $|\nabla\eta^z_T|=c\eta^z_T/\sqrt T$, we obtain the starting point for a Caccioppoli's inequality
\begin{multline*}
\frac1T\int_{\R^d}\eta^z_T|\delta_\xi^F\phi_T|^2+\int_{\R^d}\eta^z_T|\nabla\delta_\xi^F\phi_T|^2\,
\lesssim~\sum_{S\subset F}\int_{J_{S\|F\setminus S}}\eta^z_T|\nabla\delta_\xi^F\phi_T|\,|\nabla\delta_\xi^{F\setminus S}\phi_T|\\
\qquad+\frac{c}{\sqrt T}\sum_{S\subset F}\int_{J_{S\|F\setminus S}}\eta^z_T|\delta_\xi^F\phi_T|\,|\nabla\delta_\xi^{F\setminus S}\phi_T|+\frac c{\sqrt T}\int_{\R^d}\eta^z_T|\delta_\xi^F\phi_T|\,|\nabla\delta_\xi^F\phi_T|.
\end{multline*}
It is crucial to note here that, for fixed $F$, the sets $J_{S\|F\setminus S}$, $S\subset F$, are all disjoint. By Young's inequality, and choosing $c>0$ small enough so that one may absorb all the terms but two in the left-hand side, this turns into
\begin{align*}
\frac1T\int_{\R^d}\eta^z_T|\delta_\xi^F\phi_T|^2+\int_{\R^d}\eta^z_T|\nabla\delta_\xi^F\phi_T|^2\lesssim\sum_{S\subset F}\int_{J_{S\|F\setminus S}}\eta^z_T|\nabla\delta_\xi^{F\setminus S}\phi_T|^2.
\end{align*}
For $L\sim1$, taking the square root of both sides yields
\begin{align}
e^{-cL/\sqrt T}\left(T^{-\frac12}(\delta_\xi^F\phi_T)_L(z)+(\nabla\delta_\xi^F\phi_T)_L(z)\right)&\lesssim\left(\sum_{S\subset F}\int_{J_{S\|F\setminus S}}\eta^z_T|\nabla\delta_\xi^{F\setminus S}\phi_T|^2\right)^{\frac12}\nonumber\\
&\le \sum_{S\subset F}\left(\int_{J_{S\|F\setminus S}}\eta^z_T|\nabla\delta_\xi^{F\setminus S}\phi_T|^2\right)^{\frac12}.\label{eq:preboundT}
\end{align}
Now note that, renumbering the sum in terms of $F\setminus S$, and using that $J_{S\|F\setminus S}=\varnothing$ whenever $S=\varnothing$, we get
\begin{align*}
\sum_{|F|=k}\sum_{S\subset F}\left(\int_{J_{S\|F\setminus S}}\eta_T^z|\nabla\delta_\xi^{F\setminus S}\phi_T|^2\right)^{\frac12}&\le\sum_{|F|\le k-1}\sum_{|S|\le k}\left(\int_{J_{S\|F}}\eta_T^z|\nabla\delta_\xi^{F}\phi_T|^2\right)^{\frac12},
\end{align*}
and hence, as $J_{S\|F}\subset J_n\subset B_R(q_n)$ for any $n\in S$,
\begin{align}
\sum_{|F|=k}\sum_{S\subset F}\left(\int_{J_{S\| F\setminus S}}\eta_T^z|\nabla\delta_\xi^{F\setminus S}\phi_T|^2\right)^{\frac12}&\le \sum_{|F|\le k-1}\sum_{n}\left(\int_{B_R(q_n)}\eta_T^z|\nabla\delta_\xi^{F}\phi_T|^2\right)^{\frac12}\sum_{|S|\le k\atop n\in S}\mathds1_{J_{S\|F}\ne\varnothing}.\label{eq:bornplijd1}
\end{align}
We bound the last sum as follows: for any fixed $n$, recalling that by assumption~\eqref{eq:boundrho} the intersections of the inclusions $J_n$'s are of degree at most $\Gamma\sim1$,
\begin{align}\label{eq:firstboundsumchi}
\sum_{|S|\le k\atop n\in S}\mathds1_{J_{S\|F}\ne\varnothing}&\le \sum_{|S|\le k\atop n\in S}\mathds1_{J_{S}\ne\varnothing}\le \sum_{j=1}^k\binom{\Gamma-1}{j-1}\le 2^{\Gamma-1}\lesssim1.
\end{align}
As we have $\eta_T^z(x)\le e^{-c|q_n-z|/\sqrt T}e^{cR/\sqrt T}$ for all $x\in B_R(q_n)$, we can then deduce from~\eqref{eq:bornplijd1} and \eqref{eq:firstboundsumchi}:
\begin{align*}
\sum_{|F|=k}\sum_{S\subset F}\left(\int_{J_{S\|F\setminus S}}\eta_T^z|\nabla\delta_\xi^{F\setminus S}\phi_T|^2\right)^{\frac12}&\lesssim e^{cR/\sqrt T}\sum_{|F|\le k-1}\sum_{n}e^{-c|q_n-z|/\sqrt T}(\nabla\delta_\xi^{F}\phi_T)_R(q_n).
\end{align*}
We then sum \eqref{eq:preboundT} over $|F|=k$, $k\ge 1$, and use the above estimate to get for any $z\in\R^d$,
\begin{align*}
S_T^k(z):=&~T^{-\frac12}\sum_{|F|=k}(\delta_\xi^F\phi_T)_L(z)+\sum_{|F|=k}(\nabla\delta_\xi^F\phi_T)_L(z)\\
\lesssim&~ e^{c(L+R)/\sqrt T}\sum_ne^{-c|q_n-z|/\sqrt T}\sum_{|F|\le k-1}(\nabla\delta_\xi^F\phi_T)_R(q_n).
\end{align*}
Combining this with \eqref{eq:modif-corr-estim}, we conclude by induction that, for some (deterministic) constant $C_T\sim_T1$,
\[S_T^k(z)\lesssim C_T^k\sum_{j=1}^k\underbrace{\sum_{n_1,\ldots,n_j}e^{-c|q_{n_1}-z|/\sqrt T}\prod_{i=2}^{j}e^{-c|q_{n_{i-1}}-q_{n_i}|/\sqrt T}}_{\displaystyle =:I_T^j(z)}.\]
It only remains to compute the sum $I_T^j(z)$. For that purpose, we compare sums to integrals
\begin{align*}
I_T^j(z)\le e^{cjR/\sqrt T}\sum_{n_1,\ldots,n_j}\int_{B_R(q_{n_1})}\!\!\!\!\ldots\!\int_{B_R(q_{n_j})}e^{-c|x_1-z|/\sqrt T}\prod_{i=2}^{j}e^{-c|x_{{i-1}}-x_{i}|/\sqrt T}dx_j\ldots dx_1,
\end{align*}
and hence,
\begin{align*}
I_T^j(z)\le e^{cjR/\sqrt T}\int_{(\R^d)^j}e^{-c|x_1-z|/\sqrt T}\rho(B_R(x_1))\prod_{i=2}^{j}\bigg(e^{-c|x_{{i-1}}-x_{i}|/\sqrt T}\rho(B_R(x_i))\bigg)dx_1\ldots dx_j.
\end{align*}
Taking expectation of $I_T^j(z)^s$, for some $s\ge1$, and applying the triangle and the H\"older inequalities, we obtain
\begin{align*}
\E[I_T^j(z)^s]^{1/s}&\le e^{cjR/\sqrt T}\E[\rho(B_R)^{sj}]^{1/s}\int_{(\R^d)^j}e^{-c|x_1-z|/\sqrt T}\prod_{i=2}^{j}e^{-c|x_{{i-1}}-x_{i}|/\sqrt T}dx_1\ldots dx_j,
\end{align*}
which finally gives, by an obvious change of variables,
\begin{align*}
\E[I_T^j(z)^s]^{\frac1s}&\le e^{cjR/\sqrt T}\E[\rho(B_R)^{sj}]^{\frac1s}\bigg(\int_{\R^d}e^{-c|x|/\sqrt T}dx\bigg)^j=C^jT^{jd/2}e^{cjR/\sqrt T}\E[\rho(B_R)^{sj}]^{\frac1s},
\end{align*}
and the announced result is then proved.
\end{proof}

Based on this deterministic estimate, we prove a lemma which will be crucial to give sense to formal calculations, and implies the absolute convergence of all the series we will be considering for fixed $T$, at least under the additional assumption that moments of $\rho$ are finite. Note that the result obviously also holds for $\nabla\phi_T^{F}$ replaced e.g. by $\nabla\phi_T^{E^{(p)}\cup F}$.

\begin{lem}\label{lem:convfinT}
Assume that $\E[\rho(Q)^s]<\infty$ for all $s\ge1$.
For all $T>0$, and $k\ge1$, we have
\begin{align}\label{eq:convfinTS}
S_T^k:=\sum_{|F|=k}\sum_{G\subset F}\E\left[|C_{F\setminus G}|\,|\nabla\delta_\xi^G\phi_T|\,(1+|\nabla\phi_T^F|)\right]\,&<\infty,\\
\sum_{|F|=k}\sum_{G\subset F}\E\left[|C_{G}|\,|\nabla\delta_\xi^{F\setminus G}\phi_T^G|\,(1+|\nabla\phi_T^F|)\right]\,&<\infty,\label{eq:convfinTS+}\\
\sum_{|F|=k}\E\left[|C_F|\,(1+|\nabla\phi_T|)(1+|\nabla\phi_T^{ F}|)\right]\,&<\infty.\label{eq:convfinTS2}
\end{align}
\end{lem}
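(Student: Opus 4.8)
The plan is to deduce Lemma~\ref{lem:convfinT} directly from the deterministic estimate of Lemma~\ref{lem:finiteT}, using only elementary covering/combinatorial bookkeeping together with the Cauchy--Schwarz and Hölder inequalities in the probability space. The three sums \eqref{eq:convfinTS}, \eqref{eq:convfinTS+} and \eqref{eq:convfinTS2} are structurally identical: each is a sum over $k$-uplets $F$ (and in the first two cases a further subset $G\subset F$) of an expectation of a product of a coefficient of the form $C_\bullet$, a gradient of a difference operator applied to $\phi_T^\bullet$, and a factor $1+|\nabla\phi_T^\bullet|$. In each case the coefficient $C_\bullet$ is supported in $\bigcup_{n\in F}J_n\subset\bigcup_{n\in F}B_R(q_n)$ and is bounded pointwise ($|C_\bullet|\lesssim 1$ since $A,A'\in\M_\lambda$), so the expectation is controlled by $\sum_{n\in F}\E[\mathds1_{B_R(q_n)}\,|\nabla\delta_\xi^G\phi_T^\bullet|\,(1+|\nabla\phi_T^F|)]$ and hence, by Cauchy--Schwarz in space on the ball $B_R(q_n)$, by $\sum_{n\in F}\E[(\nabla\delta_\xi^G\phi_T^\bullet)_R(q_n)\,(1+(\nabla\phi_T^F)_R(q_n))]$ up to the fixed constant $|B_R|$.

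First I would fix $n$ and sum over all $k$-uplets $F$ containing $n$ (and over $G\subset F$). The key point is that the random variable $\sum_{|F|=k,\ n\in F}\sum_{G\subset F}(\nabla\delta_\xi^G\phi_T^{F})_R(q_n)$ is, after relabelling, bounded by a translate of the quantity $S_T^k(q_n)$ appearing in the proof of Lemma~\ref{lem:finiteT} (more precisely by $\sum_{j\le k}S_T^j(q_n)$-type expressions, since $G$ ranges over subsets of $F$ and $\delta_\xi^G\phi_T^F$ can be re-expanded via~\eqref{eq:defdeltaxi1} into combinations of $\nabla\delta_\xi^{S}\phi_T$ with $|S|\le k$); in all cases Lemma~\ref{lem:finiteT} gives a bound of the form $\E[(\,\cdot\,)^s]^{1/s}\le C_T^{sk}\,\E[\rho(B_R)^{sk}]^{1/s}$, which is finite for every $s$ by the finite-moments hypothesis. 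Similarly the factor $1+(\nabla\phi_T^F)_R(q_n)$ is handled: either bound it crudely by $1+\sum_{|S|\subset F}|\nabla\delta_\xi^S\phi_T|$ via $\sum_{S\subset F}\nabla\delta_\xi^S\phi_T=\nabla\phi_T^F+\xi$ and again invoke Lemma~\ref{lem:finiteT}, or directly use the $L$-averaged bound on $\nabla\phi_T^F$ from~\eqref{eq:modif-corr-estim}. Then apply the Cauchy--Schwarz (or Hölder) inequality in $\Omega$ to split the product of the two random factors into a product of second moments (resp. $s$-th moments), each of which is finite and bounded by $C_T^k\E[\rho(B_R)^{Ck}]$.

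Finally I would sum over $n$. Here stationarity enters: each summand $\E[\mathds1_{B_R(q_n)}\,(\cdots)(q_n)]$ depends on $q_n$ only through a shift, so after using translation invariance of $\p$ the sum over $n$ becomes $\E[\rho(B_R)\times(\text{stationary integrand})]$ times harmless geometric factors — more carefully, one keeps the $e^{-c|q_n-z|/\sqrt T}$ localization weights coming from Lemma~\ref{lem:finiteT}'s proof so that $\sum_n e^{-c|q_n-z|/\sqrt T}(\cdots)(q_n)$ is summable and, after taking expectation and comparing sums to integrals exactly as in that proof, produces a factor $\int_{\R^d}e^{-c|x|/\sqrt T}dx\cdot\E[\rho(B_R)^{sk}]^{1/s}\lesssim_T C^k T^{kd/2}\E[\rho(B_R)^{sk}]^{1/s}<\infty$. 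This yields $S_T^k<\infty$, and the other two sums are bounded in the identical manner since the only difference is which superscript/subscript decorates $\phi_T$ and $C$, none of which affects the pointwise bound $|C_\bullet|\lesssim 1$, the support in $\bigcup_{n\in F}B_R(q_n)$, or the applicability of Lemma~\ref{lem:finiteT}.

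The main obstacle I anticipate is purely combinatorial bookkeeping rather than analytic: one must check that after expanding $\delta_\xi^G\phi_T^{F}$ (or $\delta_\xi^{F\setminus G}\phi_T^G$) into elementary pieces $\nabla\delta_\xi^S\phi_T^\bullet$ with $|S|\le k$ and re-indexing the sum over $F\ni n$, the resulting multi-sum is genuinely dominated, term by term, by the quantities $S_T^j(q_n)$ (or sums of their translates over the $\le 2R$-neighbourhood of $q_n$) for which Lemma~\ref{lem:finiteT} provides moment bounds; the degree-$\Gamma$ intersection bound~\eqref{eq:boundrho} is what keeps the number of such terms per point $q_n$ uniformly bounded, exactly as in~\eqref{eq:firstboundsumchi}. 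Once this domination is in place, the finiteness follows mechanically from Lemma~\ref{lem:finiteT}, Hölder in $\Omega$, and the finite-moments assumption on $\rho$.
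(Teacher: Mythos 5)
Your overall toolbox (Lemma~\ref{lem:finiteT}, H\"older in $\Omega$, the moment assumption on $\rho$, the crude bound on $\nabla\phi_T^F$ from~\eqref{eq:modif-corr-estim}) is the right one, but the central bookkeeping step fails. The fatal move is bounding $|C_{F\setminus G}|$ by $\sum_{n\in F}\mathds1_{B_R(q_n)}$ and then fixing a single $n$ and summing over all $k$-uplets $F\ni n$. The support of $C_{F\setminus G}$ is the \emph{intersection} $J_{F\setminus G}=\bigcap_{m\in F\setminus G}J_m$, and it is precisely the constraint that \emph{every} $m\in F\setminus G$ satisfies $q_m\in B_{R+L}$ (after localizing near the evaluation point) that makes the sum over $F\setminus G$ finite. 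Once you retain only one ball indicator, the indices in $F\setminus(G\cup\{n\})$ are completely unconstrained: they enter the summand only through the factor $1+|\nabla\phi_T^F|$, which is merely bounded (by $\lesssim T^{d/2}$ in local quadratic average), not summable in those indices. Consequently your key claim that $\sum_{|F|=k,\,n\in F}\sum_{G\subset F}(\nabla\delta_\xi^G\phi_T)_R(q_n)$ is dominated by $\sum_{j\le k}S_T^j(q_n)$ is false: $S_T^j$ counts each $G$ exactly once, whereas your double sum counts each fixed $G$ once for every $F\supset G\cup\{n\}$, i.e.\ infinitely often, so the left-hand side is $+\infty$ as soon as one term is positive. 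The closing appeal to~\eqref{eq:boundrho} and~\eqref{eq:firstboundsumchi} cannot rescue this, because those bounds exploit the indicator of the intersection ($\mathds1_{J_S}$, $\mathds1_{J_{S\|F}}$), which your reduction has already discarded.

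The repair is to keep the full localization, and this is exactly what the paper does: by stationarity of the summed field one inserts a local average over $B_L$, applies Cauchy--Schwarz in space, and bounds $|C_{F\setminus G}(x)|\lesssim\mathds1_{\{q_m\in B_{R+L}\ \forall m\in F\setminus G\}}=:\chi_L(F\setminus G)$ for $x\in B_L$. A change of variables $F\leadsto F\cup G$ then decouples the two sums: $\sum_{|F|\le k}\chi_L(F)\lesssim\rho(B_{R+L})^k$ has finite moments by assumption; the last factor is bounded deterministically, $(\nabla\phi_T^{F\cup G})_L\lesssim T^{d/2}$ by~\eqref{eq:modif-corr-estim}; and $\sum_{|G|\le k}(\nabla\delta_\xi^G\phi_T)_L$ has finite moments by Lemma~\ref{lem:finiteT}. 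A single Cauchy--Schwarz in probability then yields $S_T^k<\infty$, and the two other sums in~\eqref{eq:convfinTS+}--\eqref{eq:convfinTS2} are treated the same way (for~\eqref{eq:convfinTS+}, expand $\nabla\delta_\xi^{F\setminus G}\phi_T^G=\sum_{S\subset G}\nabla\delta_\xi^{S\cup(F\setminus G)}\phi_T$ via~\eqref{eq:defdeltaxi1} before invoking Lemma~\ref{lem:finiteT}; note that your expression $\delta_\xi^G\phi_T^F$ with $G\subset F$ is not the quantity appearing in the statement). There is also no need for the exponential weights or for re-centering at the random points $q_n$: evaluating at the origin after using stationarity of the full sum suffices.
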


\begin{proof}
We only prove \eqref{eq:convfinTS}; the proofs of the other statements are similar.
Let $k\ge0$ be fixed. 
By stationarity we add a local average over the ball $B_L$, say, with $L\sim1$, we apply the Cauchy-Schwarz inequality, and note that, for all $x\in B_L$,
\[|C_H(x)|\lesssim \mathds1_{x\in J_n,\forall n\in H}\le\mathds1_{x\in B_R(q_n),\forall n\in H}=\mathds1_{q_n\in B_R(x),\forall n\in H}\le\mathds1_{q_n\in B_{R+L},\forall n\in H}=:\chi_L(H),\]
so that we can write by the change of variables $F\leadsto F\cup G$,
\begin{align*}
S_T^k&\lesssim\sum_{|F|=k}\sum_{G\subset F}\E\bigg[\chi_L(F\setminus G)(\nabla\delta_\xi^G\phi_T)_L(1+(\nabla\phi_T^{F})_L)\bigg]\\
&\le\sum_{|F|\le k}\sum_{|G|\le k}\E\bigg[\chi_L(F)(\nabla\delta_\xi^G\phi_T)_L(1+(\nabla\phi_T^{F\cup G})_L)\bigg].
\end{align*}
Using the deterministic estimate~\eqref{eq:modif-corr-estim} in the form of $(\nabla\phi_T^{F\cup G})_L\lesssim T^{\frac{d}{2}}$, this yields
\begin{align*}
S_T^k&\lesssim T^{\frac{d}{2}}\E\bigg[\bigg(\sum_{|F|\le k}\chi_L(F)\bigg)\bigg(\sum_{|G|\le k}(\nabla\delta_\xi^G\phi_T)_L\bigg)\bigg].
\end{align*}
The first sum can be estimated as follows: since $\binom ni\le n^i/i!\le (en/i)^i$ and  $\sum_{i=1}^\infty(e/i)^i\lesssim1$,
\begin{align*}
\sum_{|F|\le k}\chi_L(F)=\sum_{|F|\le k}\mathds1_{q_n\in B_{R+L},\forall n\in F}\le\sum_{i=0}^k\binom{\rho(B_{R+L})}i\lesssim \rho(B_{R+L})^k,
\end{align*}
so that we conclude by Lemma~\ref{lem:finiteT} and the Cauchy-Schwarz inequality that
\begin{align*}
S_T^k&\lesssim T^{d/2}\E[\rho(B_{R+L})^{2k}]^{1/2}\E\bigg[\bigg(\sum_{|G|\le k}(\nabla\delta_\xi^G\phi_T)_L\bigg)^2\bigg]^{1/2}<\infty.\qedhere
\end{align*}
\end{proof}

We now turn to energy estimates that hold uniformly with respect to $T$ and the moments bounds on $\rho$. The following two estimates will be further improved in the following section.

\begin{lem}\label{lem:aprioriproba}
Assume that $\E[\rho(Q)^s]<\infty$ for all $s\ge1$. Then, there exists a constant $C\sim1$ (independent of $T$ and of the moments of $\rho$) such that, for all $k\ge0$ and $T>0$,
\[\E\bigg[\sum_{|F|=k}|\nabla\delta_\xi^{F}\phi_T|^2\bigg]\le C^{k+1}.\]
\qed
\end{lem}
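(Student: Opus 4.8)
The plan is to argue by induction on $k$, controlling the stationary (and, for each fixed $T$, finite by Lemma~\ref{lem:finiteT}) quantity $Y_k:=\E\big[\sum_{|F|=k}|\nabla\delta_\xi^F\phi_T|^2\big]$; the finiteness is exactly what legitimates all the rearrangements below. For $k=0$ the only term is $F=\varnothing$, and $\nabla\delta_\xi^\varnothing\phi_T=\nabla\phi_T+\xi$, so $Y_0\lesssim1$ by~\eqref{eq:modif-corr-estim}. For $k\ge1$ the starting point is the form~\eqref{eq:satif1} of the corrector equation (with $H=\varnothing$): for $|F|=k$,
\[
\frac1T\delta_\xi^F\phi_T-\nabla\cdot A^F\nabla\delta_\xi^F\phi_T=\sum_{S\subset F,\,S\ne\varnothing}(-1)^{|S|+1}\nabla\cdot C_{S\|F\setminus S}\nabla\delta_\xi^{F\setminus S}\phi_T,
\]
whose crucial structural feature is that the right-hand side only involves the lower-order differences $\nabla\delta_\xi^{F\setminus S}\phi_T$ with $|F\setminus S|\le k-1$.

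Next I would test this equation, for fixed $F$, against $\eta_T^z\,\delta_\xi^F\phi_T$ with the exponential weight $\eta_T^z(x)=e^{-c|x-z|/\sqrt T}$ used in the proof of Lemma~\ref{lem:finiteT}, with $c\sim1$ small. Using that $A^F\in\M_\lambda$ is uniformly elliptic, that $|\nabla\eta_T^z|=c\,\eta_T^z/\sqrt T$, and Young's inequality, every error term can be arranged to be of the form $c^2\!\int\eta_T^z|\nabla\delta_\xi^F\phi_T|^2$ or $\tfrac{c^2}{T}\!\int\eta_T^z|\delta_\xi^F\phi_T|^2$ and thus absorbed into the left-hand side uniformly in $T>0$ once $c$ is small; the absorption of the right-hand side contributions uses, as in Lemma~\ref{lem:finiteT}, that for fixed $F$ the sets $J_{S\|F\setminus S}$, $S\subset F$, are pairwise disjoint. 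This yields a Caccioppoli-type estimate
\[
\int_{\R^d}\eta_T^z\,|\nabla\delta_\xi^F\phi_T|^2\;\lesssim\;\sum_{S\subset F,\,S\ne\varnothing}\int_{J_{S\|F\setminus S}}\eta_T^z\,|\nabla\delta_\xi^{F\setminus S}\phi_T|^2 .
\]

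I would then sum over all $k$-uplets $|F|=k$ and regroup the right-hand side by the value of $F':=F\setminus S$. At any point $x$ the factor $\mathds1_{J_{S\|F\setminus S}}(x)$ forces $S\subset N_x:=\{n:x\in J_n\}$ and $F'\cap N_x=\varnothing$, and since $\#N_x\le\Gamma$ a.s. by~\eqref{eq:boundrho}, for each fixed $F'$ there are at most $2^\Gamma\lesssim1$ admissible sets $S$; hence, pointwise,
\[
\sum_{|F|=k}\;\sum_{S\subset F,\,S\ne\varnothing}\mathds1_{J_{S\|F\setminus S}}\,|\nabla\delta_\xi^{F\setminus S}\phi_T|^2\;\lesssim\;\sum_{j=1}^{k}\;\sum_{|F'|=k-j}|\nabla\delta_\xi^{F'}\phi_T|^2 .
\]
Multiplying by $\eta_T^z$, integrating, taking expectations, and using that $\sum_{|F'|=k-j}|\nabla\delta_\xi^{F'}\phi_T(x)|^2$ is stationary in $x$, the common (finite) factor $\int_{\R^d}\eta_T^z$ cancels on both sides, leaving a recursion $Y_k\le C_0\sum_{i=0}^{k-1}Y_i$ with $C_0\sim1$ independent of $T$. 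Inserting the induction hypothesis $Y_i\le C^{i+1}$ and choosing $C\sim1$ large enough that $C_0/(C-1)\le1$ and $C$ exceeds the constant bounding $Y_0$, one gets $Y_k\le C_0C\,\tfrac{C^k-1}{C-1}\le C^{k+1}$, which closes the induction.

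The main obstacle, beyond the combinatorial bookkeeping, is to keep the constant genuinely uniform in $T$: this is why one works with a weighted (or localized) energy identity rather than an energy estimate ``in probability'' — the individual $\delta_\xi^F\phi_T$ are not stationary — and why both the disjointness of the $J_{S\|F\setminus S}$ and the bounded-intersection assumption~\eqref{eq:boundrho} must be exploited, so that after summation over $F$ and taking expectations the $T$-dependent weight $\int_{\R^d}\eta_T^z$ appears identically on the two sides and drops out. A cutoff $\chi_N$ on $B_N$ with $N\uparrow\infty$, as in Step~2 of the proof of Lemma~\ref{lem:improvedap2}, is an equivalent alternative, the a priori bounds of Lemma~\ref{lem:finiteT} making the boundary contribution $\tfrac1N\int_{B_{2N}}|\delta_\xi^F\phi_T|^2$ asymptotically negligible.
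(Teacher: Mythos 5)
Your proof is correct and follows essentially the same route as the paper: the same induction based on equation~\eqref{eq:satif1} with the fully perturbed operator $A^F$, absorption via the disjointness of the sets $J_{S\|F\setminus S}$ and the bounded-intersection assumption~\eqref{eq:boundrho}, and a recursion $Y_{k}\lesssim\sum_{i<k}Y_i$ closed by stationarity after summing over $F$. The only difference is cosmetic: you localize with the exponential weight $\eta_T^z$ (whose mass cancels by stationarity), whereas the paper tests with the cutoff $\chi_N$ and lets $N\uparrow\infty$, an equivalent device that you yourself mention.
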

\begin{proof}
For $k=0$, the result $\E[|\nabla\phi_T+\xi|^2]\lesssim1$ reduces to the basic energy estimate on the modified corrector (see~\eqref{eq:modif-corr-estim}). We now argue by induction. Assume that the result holds for some fixed $k\ge0$. From~\eqref{eq:satif1} in Lemma~\ref{lem:eqsatif}, we learn that $\delta_\xi^F\phi_T$ satisfies on $\R^d$
\[\frac1T\delta_\xi^F\phi_T-\nabla\cdot A^{F}\nabla\delta_\xi^F\phi_T=\sum_{S\subset F}(-1)^{|S|+1}\nabla\cdot C_{S\|F\setminus S}\nabla\delta_\xi^{F\setminus S}\phi_T.\]
We test this equation with $\chi_N\delta_\xi^F\phi_T$, where $\chi_N$ is a cut-off function for $B_{N}$ in $B_{2N}$ such that $|\nabla\chi_N|\lesssim1/N$. This yields
\begin{align}\label{eq:bornmlim}
\int_{B_N}|\nabla\delta_\xi^F\phi_T|^2&\lesssim \sum_{S\subset F}\int_{B_{2N}}\mathds1_{J_{S\|F\setminus S}}|\nabla\delta_\xi^F\phi_T|\,|\nabla\delta_\xi^{F\setminus S}\phi_T|\\
&\qquad+\frac1N\int_{B_{2N}}|\delta_\xi^F\phi_T|\,|\nabla\delta_\xi^F\phi_T|+\frac1N\sum_{S\subset F}\int_{B_{2N}}\mathds1_{J_{S\|F\setminus S}}|\delta_\xi^F\phi_T|\,|\nabla\delta_\xi^{F\setminus S}\phi_T|.\nonumber
\end{align}
We then take the expectation, sum over $|F|=k-1$ (all the sums are convergent by Lemmas~\ref{lem:finiteT} and~\ref{lem:convfinT}), and divide by $N^d$:
\begin{align*}
&\E\bigg[\fint_{B_N}\sum_{|F|=k+1}|\nabla\delta_\xi^F\phi_T|^2\bigg]\lesssim \E\bigg[\fint_{B_{2N}}\sum_{|F|=k+1}\sum_{S\subset F}\mathds1_{J_{S\|F\setminus S}}|\nabla\delta_\xi^F\phi_T|\,|\nabla\delta_\xi^{F\setminus S}\phi_T|\bigg]\\
&\hspace{1.3cm}+\frac1N\E\bigg[\fint_{B_{2N}}\sum_{|F|=k+1}|\delta_\xi^F\phi_T|\,|\nabla\delta_\xi^F\phi_T|\bigg]+\frac1N\E\bigg[\fint_{B_{2N}}\sum_{|F|=k+1}\sum_{S\subset F}\mathds1_{J_{S\|F\setminus S}}|\delta_\xi^F\phi_T|\,|\nabla\delta_\xi^{F\setminus S}\phi_T|\bigg].
\end{align*}
Since each sum above is absolutely convergent and defines an integrable stationary random field (the expectation of which obviously does not depend on the point it is taken), this inequality also takes the form
\begin{align*}
&\E\bigg[\sum_{|F|=k+1}|\nabla\delta_\xi^F\phi_T|^2\bigg]\lesssim \E\bigg[\sum_{|F|=k+1}\sum_{S\subset F}\mathds1_{J_{S\|F\setminus S}}|\nabla\delta_\xi^F\phi_T|\,|\nabla\delta_\xi^{F\setminus S}\phi_T|\bigg]\\
&\hspace{2cm}+\frac1N\E\bigg[\sum_{|F|=k+1}|\delta_\xi^F\phi_T|\,|\nabla\delta_\xi^F\phi_T|\bigg]+\frac1N\E\bigg[\sum_{|F|=k+1}\sum_{S\subset F}\mathds1_{J_{S\|F\setminus S}}|\delta_\xi^F\phi_T|\,|\nabla\delta_\xi^{F\setminus S}\phi_T|\bigg].
\end{align*}
Taking the limit $N\uparrow\infty$ then yields
\begin{align}\label{eq:REF-ERG}
\E\bigg[\sum_{|F|=k+1}|\nabla\delta_\xi^F\phi_T|^2\bigg]&\lesssim \E\bigg[\sum_{|F|=k+1}\sum_{S\subset F}\mathds1_{J_{S\|F\setminus S}}|\nabla\delta_\xi^F\phi_T|\,|\nabla\delta_\xi^{F\setminus S}\phi_T|\bigg].
\end{align}
By Young's inequality and the disjointness of the sets $J_{S\|F\setminus S}$, $S\subset F$ (for fixed $F$), in the form of $\sum_{S\subset F}\mathds1_{J_{S\|F\setminus S}}\le1$, 
we may absorb part of the right-hand side into the left-hand side, and obtain
\begin{align}
\E\bigg[\sum_{|F|=k+1}|\nabla\delta_\xi^F\phi_T|^2\bigg]&\lesssim \E\bigg[\sum_{|F|=k+1}\sum_{S\subset F}\mathds1_{J_{S\|F\setminus S}}|\nabla\delta_\xi^{F\setminus S}\phi_T|^2\bigg]\nonumber\\
&\le\E\bigg[\sum_{|F|\le k}|\nabla\delta_\xi^F\phi_T|^2\sum_{|S|\le k+1}\mathds1_{J_{S\|F}}\bigg],\label{eq:lem1apbasis}
\end{align}
where we used that $J_{\varnothing\|F}=\varnothing$.

By assumption~\eqref{eq:boundrho}, proceeding as for~\eqref{eq:firstboundsumchi}, we have
\begin{align}\label{eq:firstboundsumchi+}
\sum_{|S|\le k+1}\mathds1_{J_{S\|F}}(0)\le\sum_{|S|\le k+1}\mathds1_{J_{S}}(0)\le\sum_{j=1}^{k+1}\binom{\Gamma}{j}\le 2^{\Gamma}\lesssim1,
\end{align}
so that~\eqref{eq:lem1apbasis} finally turns into
\begin{align*}
\E\bigg[\sum_{|F|=k+1}|\nabla\delta_\xi^F\phi_T|^2\bigg]&\lesssim \E\bigg[\sum_{|F|\le k}|\nabla\delta_\xi^F\phi_T|^2\bigg],
\end{align*}
from which the desired conclusion follows by the induction assumption.
\end{proof}

For sums $\sum_{|F|=k}$ of size $k=1$, the following result is easily proven as an energy estimate in the probability space; for general $k$ it also holds but the proof relies on a subtle induction and combinatorial argument, which is presented in next section.

\begin{lem}\label{lem:apjust1}
Assume that $\E[\rho(Q)^s]<\infty$ for all $s\ge1$. Then, for all $T>0$, we have (uniformly in $T$ and in the moments of $\rho$)
\[\E\bigg[\Big|\sum_{n}\nabla\delta_\xi^{\{n\}}\phi_T\Big|^2\bigg]\lesssim 1.\]
\qed
\end{lem}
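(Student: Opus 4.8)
The plan is to introduce the summed field $\Psi_T:=\sum_n\delta_\xi^{\{n\}}\phi_T=\sum_n\delta^{\{n\}}\phi_T$ (recall that $\nabla\delta_\xi^{\{n\}}\phi_T=\nabla\delta^{\{n\}}\phi_T$), to observe that it solves a nice equation, and then to run a plain energy estimate in the probability space. For fixed $T>0$ and under the finite-moments assumption, Lemmas~\ref{lem:finiteT} and~\ref{lem:convfinT} guarantee that the defining sum converges absolutely and that $\Psi_T$ is a \emph{stationary} random field with $\Psi_T,\nabla\Psi_T\in\Ld^2(\Omega)$. Applying~\eqref{eq:satif2} with $F=\{n\}$ and $H=\varnothing$, each $\delta^{\{n\}}\phi_T$ solves $\frac1T\delta^{\{n\}}\phi_T-\nabla\cdot A\nabla\delta^{\{n\}}\phi_T=\nabla\cdot C^{\{n\}}(\nabla\phi_T^{\{n\}}+\xi)$, so that summing over $n$ the field $\Psi_T$ solves weakly on $\R^d$
\[\frac1T\Psi_T-\nabla\cdot A\nabla\Psi_T=\nabla\cdot G,\qquad G:=\sum_nC^{\{n\}}(\nabla\phi_T^{\{n\}}+\xi),\]
which is the order-$1$ analogue, with the unperturbed operator, of equation~\eqref{eq:eqnsatis00} used in Step~4 of the proof of Lemma~\ref{lem:improvedap2}.

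First I would show that $\E[|G|^2]\lesssim1$, \emph{uniformly} in $T$ and in the moment bounds on $\rho$. Using $\nabla\phi_T^{\{n\}}+\xi=(\nabla\phi_T+\xi)+\nabla\delta^{\{n\}}\phi_T$, split $G=\big(\sum_nC^{\{n\}}\big)(\nabla\phi_T+\xi)+\sum_nC^{\{n\}}\nabla\delta^{\{n\}}\phi_T$. By assumption~\eqref{eq:boundrho}, at most $\Gamma$ of the indicators $\mathds1_{J_n}$ are nonzero at any given point, whence $|\sum_nC^{\{n\}}|\lesssim1$, so the first term is bounded in $\Ld^2(\Omega)$ by $\E[|\nabla\phi_T+\xi|^2]^{1/2}\lesssim1$ thanks to~\eqref{eq:modif-corr-estim}. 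For the second term, again at every point it is a sum of at most $\Gamma$ vectors, so $|\sum_nC^{\{n\}}\nabla\delta^{\{n\}}\phi_T|^2\lesssim\sum_n\mathds1_{J_n}|\nabla\delta^{\{n\}}\phi_T|^2\le\sum_n|\nabla\delta^{\{n\}}\phi_T|^2$; taking the expectation and invoking Lemma~\ref{lem:aprioriproba} with $k=1$ gives $\E[|\sum_nC^{\{n\}}\nabla\delta^{\{n\}}\phi_T|^2]\lesssim\E[\sum_n|\nabla\delta^{\{n\}}\phi_T|^2]\lesssim1$. Altogether $\E[|G|^2]\lesssim1$.

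Then I would conclude by testing the equation for $\Psi_T$ with $\Psi_T$ itself in the probability space, which is admissible precisely because $\Psi_T$ is stationary with $\Psi_T,\nabla\Psi_T\in\Ld^2(\Omega)$: this yields $\frac1T\E[|\Psi_T|^2]+\E[\nabla\Psi_T\cdot A\nabla\Psi_T]=-\E[\nabla\Psi_T\cdot G]$. Dropping the nonnegative massive term, using the ellipticity bound $\nabla\Psi_T\cdot A\nabla\Psi_T\ge\lambda|\nabla\Psi_T|^2$ and the Cauchy--Schwarz inequality, one gets $\lambda\E[|\nabla\Psi_T|^2]\le\E[|\nabla\Psi_T|^2]^{1/2}\E[|G|^2]^{1/2}$, hence $\E[|\nabla\Psi_T|^2]\le\lambda^{-2}\E[|G|^2]\lesssim1$, with constants independent of $T$ and of the moment bounds on $\rho$. (Alternatively, as in the proof of Lemma~\ref{lem:improvedap2}, one may test the equation in $\R^d$ against $\chi_N\Psi_T$ for a cut-off $\chi_N$ of $B_N$ in $B_{2N}$, take the expectation, use stationarity to normalize the spatial integrals, absorb, and let $N\uparrow\infty$; the finiteness of $\E[|\Psi_T|^2]$ needed to kill the boundary terms is again supplied for fixed $T$ by Lemmas~\ref{lem:finiteT}--\ref{lem:convfinT}.) The only mildly delicate point is this bookkeeping — that for fixed $T$ the sum defining $\Psi_T$ is well enough behaved that $\Psi_T$ is a genuine stationary $\Ld^2(\Omega)$-solution of the displayed equation, which is exactly the role of the massive regularization together with Lemmas~\ref{lem:finiteT}--\ref{lem:convfinT} — while the $T$- and moment-uniform control of $\E[|G|^2]$ is carried entirely by Lemma~\ref{lem:aprioriproba}.
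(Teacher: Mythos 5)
Your proof is correct and follows essentially the same route as the paper: sum the equation \eqref{eq:satif2} for $\delta^{\{n\}}\phi_T$ over $n$ to get the unperturbed-operator equation for $\sum_n\delta_\xi^{\{n\}}\phi_T$, justify the testing in probability by stationarity and the finite-$T$ bounds of Lemmas~\ref{lem:finiteT}--\ref{lem:convfinT} (equivalently via the cut-off $\chi_N$ and $N\uparrow\infty$), and bound the right-hand side using $\sum_n\mathds1_{J_n}\lesssim1$ from \eqref{eq:boundrho}, the decomposition $\nabla\phi_T^{\{n\}}+\xi=(\nabla\phi_T+\xi)+\nabla\delta^{\{n\}}\phi_T$, the basic estimate \eqref{eq:modif-corr-estim}, and Lemma~\ref{lem:aprioriproba} with $k=1$. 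The only cosmetic difference is that you split the source term before squaring whereas the paper applies Cauchy--Schwarz to the whole sum and decomposes afterwards; the content is identical.
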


\begin{proof}
By Lemma~\ref{lem:finiteT} for $k=1$, the sum $\sum_{n}\delta_\xi^{\{n\}}\phi_T$ is well-defined 
in $H^1_\loc(\R^d)$ and satisfies the following equation on $\R^d$:
\[\frac1T\sum_n\delta_\xi^{\{n\}}\phi_T-\nabla\cdot A\nabla\sum_n\delta_\xi^{\{n\}}\phi_T=\nabla\cdot\sum_nC^{\{n\}}(\nabla\phi_T^{\{n\}}+\xi).\]
We then test this equation with $\chi_N (\sum_{n}\delta_\xi^{\{n\}}\phi_T)$ for some cut-off $\chi_N$  for $B_{N}$
in $B_{2N}$ such that $|\nabla \chi_N|\lesssim 1/N$.
Since $\sum_{n}\nabla \delta_\xi^{\{n\}}\phi_T$ is stationary, we may proceed as for the proof of \eqref{eq:REF-ERG} in Lemma~\ref{lem:aprioriproba}, and obtain after taking the expectation and the limit $N\uparrow \infty$ (or equivalently testing the equation in probability):
\begin{align*}
\E\bigg[\Big|\sum_n\nabla\delta_\xi^{\{n\}}\phi_T\Big|^2\bigg]&\lesssim\E\bigg[\Big|\sum_nC^{\{n\}}(\nabla\phi_T^{\{n\}}+\xi)\Big|^2\bigg]\lesssim\E\bigg[\bigg(\sum_n\mathds1_{J_n}\bigg)\bigg(\sum_n\mathds1_{J_n}(1+|\nabla\phi_T^{\{n\}}|^2)\bigg)\bigg].
\end{align*}
By assumption~\eqref{eq:boundrho}, proceeding as for~\eqref{eq:firstboundsumchi}, we have $\sum_n\mathds1_{J_n}(0)\lesssim 1$, so that, using in addition the decomposition $1+|\nabla\phi_T^{\{n\}}|^2\lesssim (1+|\nabla\phi_T|^2)+|\nabla\delta_\xi^{\{n\}}\phi_T|^2$,
we obtain
\begin{align*}
\E\bigg[\Big|\sum_n\nabla\delta_\xi^{\{n\}}\phi_T\Big|^2\bigg]&\lesssim \E[1+|\nabla\phi_T|^2]+\E\bigg[\sum_n|\nabla\delta_\xi^{\{n\}}\phi_T|^2\bigg]\lesssim 1,
\end{align*}
where the last inequality follows from Lemma~\ref{lem:aprioriproba} with $k=1$.
\end{proof}

\subsection{Improved energy estimates}\label{chap:apriori}
In this section, we prove the following generalization of Lemma~\ref{lem:apjust1} to any order $k$ in the following form (choosing $j=k$ in~\eqref{eq:boundS} below): there is a constant $C\sim1$ such that, for all $k\ge1$ and $T>0$,
\[\E\bigg[\Big|\sum_{|F|=k}\nabla\delta^F\phi_T\Big|^2\bigg]\le C^{k+1},\]
and we give an interpolation result between this inequality and the energy estimates of Lemma~\ref{lem:aprioriproba}, that is, a $\ell^1(\N^{k-j},L^2(\Omega))$-type estimate.

\begin{prop}\label{prop:apimproved}
Assume that $\E[\rho(Q)^s]<\infty$ for all $s\ge1$. Then, there exists a constant $C\sim1$ (independent of $T$ and of the moments of $\rho$) such that, for all $T>0$, $k\ge0$, and $0\leq j\leq k$, 
\begin{align}\label{eq:boundS}
S_j^k:=\E\bigg[\sum_{|G|=k-j}\Big|\sum_{|F|=j\atop F\cap G=\varnothing}\nabla\delta^{F\cup G}\phi_T\Big|^2\bigg]\le C^{k+1}.
\end{align}
\qed
\end{prop}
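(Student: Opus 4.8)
The plan is to prove the family of estimates~\eqref{eq:boundS} by a double induction: an outer induction on $k$, and, for fixed $k+1$, an inner (descending) induction on $j$, exactly following the template displayed after Lemma~\ref{lem:improvedap2} and realized in the case $k=2$ in Lemma~\ref{lem:improvedap2} itself. The base case $k=0$ is trivial ($S_0^0=\E[|\nabla\phi_T+\xi|^2]\lesssim1$), and the ``diagonal'' case $j=k$ is handled: indeed $S_k^k=\E[\sum_{|F|=k}|\nabla\delta^F\phi_T|^2]\le C^{k+1}$ is precisely Lemma~\ref{lem:aprioriproba}. So in the outer induction step we assume~\eqref{eq:boundS} holds for all pairs $(i,l)$ with $l\le k$ and prove it for all $j$ with $0\le j\le k+1$; we already know it for $j=k+1$ from Lemma~\ref{lem:aprioriproba}, and we proceed downward $j=1,2,\ldots$ to establish $S^{k+1}_{k+1-j}\le C^{k+2}$.

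For the inner step I would start from the appropriate form of the equation satisfied by $\delta_\xi^{F\cup G}\phi_T^H$ provided by Lemma~\ref{lem:eqsatif}; more precisely, for the estimate on $S^{k+1}_{k+1-j}$ one sums the relevant equation over the relevant index families, tests it in space against $\chi_N$ times the (stationary) sum under consideration, integrates by parts, applies Young's inequality on each term, takes expectations, and lets $N\uparrow\infty$ (the localized Caccioppoli argument of Step~2 in the proof of Lemma~\ref{lem:improvedap2}); since after summation every term is stationary and integrable by Lemmas~\ref{lem:finiteT} and~\ref{lem:convfinT}, the cut-off errors vanish and the cross-terms can be partially absorbed into the left-hand side. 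This leaves an energy estimate bounding $S^{k+1}_{k+1-j}$ by expectations of squared sums of lower-complexity quantities. The key structural point, already visible in~\eqref{eq:errbound-0} and~\eqref{eq:boundexpec2c}--\eqref{eq:boundexpec2b}, is that the right-hand side involves (a) the indicator $\mathds1_{J_{S\|\ldots}}$ of the (disjoint-in-$S$, and at most $2^\Gamma$-overlapping-in-$n$ by~\eqref{eq:boundrho}) inclusion pieces, so that $\sum\mathds1_{J_{S\|\ldots}}\lesssim1$, and (b) via the decomposition identity~\eqref{eq:defdeltaxi1} (in the form $\nabla\delta_\xi^G\phi_T^{H\cup U}=\sum_{S\subset H}\nabla\delta_\xi^{S\cup G}\phi_T^U$), each $\nabla\delta_\xi^{(F\setminus S)\cup(G\setminus U)}\phi_T^{S\cup U}$ appearing on the right can be re-expanded into a sum of terms $\nabla\delta_\xi^{F'\cup G'}\phi_T$ with strictly smaller total complexity, or with the same complexity but fewer ``free'' indices. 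These terms are then controlled either by $S^l_i$ with $l\le k$ (outer induction hypothesis), by $S^{k+1}_{k+1-j'}$ with $j'<j$ (inner induction hypothesis), or, at the last step $j=k+1$ (the one producing $S^{k+1}_0=\E[|\sum_{|F|=k+1}\nabla\delta^F\phi_T|^2]$), by the previously established $S^{k+1}_i$ for $i\ge1$ together with $S^l_i$, $l\le k$ — mirroring Step~4 of Lemma~\ref{lem:improvedap2}, which uses the \emph{unperturbed}-operator form of the equation. Tracking constants, each such step costs a bounded multiplicative factor $C\sim1$ (the number of subsets of $F$, the overlap bound $2^\Gamma$, the Young constant), so the bound propagates as $C^{k+2}$.

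The main obstacle is the combinatorics of the inner induction: one must verify that the $N$ new forms of the equation at level $|F\cup G|=k+1$ (there are $k+1$ of them, according to how many times the operator $A^{F\cup H}$ is perturbed) each reorganize, after the Caccioppoli estimate and the re-expansion via~\eqref{eq:defdeltaxi1}, so that \emph{every} term on the right-hand side is genuinely of the form already covered by one of the three induction hypotheses — i.e., that no term of complexity $\ge k+2$, and no term of complexity $k+1$ with $\ge k+2-j$ free indices, survives. This is exactly the delicate bookkeeping that was elementary for $k=2$ (difference operators of order~$2$, disjoint inclusions) but requires the general identities~\eqref{eq:excl3.1}--\eqref{eq:excl3.3} and~\eqref{eq:satif1}--\eqref{eq:satif3} in the general case; establishing the precise combinatorial form of each equation and checking the complexity drop is where essentially all the work lies, whereas the analytic (Caccioppoli, Young, stationarity, $N\uparrow\infty$) part is routine and uniform in $T$ and in the moments of $\rho$.
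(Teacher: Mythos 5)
Your plan follows the same architecture as the paper's proof (double induction, the equations of Lemma~\ref{lem:eqsatif}, a localized Caccioppoli argument with cut-off $\chi_N$, stationarity and $N\uparrow\infty$, the overlap bound~\eqref{eq:boundrho}, the re-expansion~\eqref{eq:defdeltaxi1} and the combinatorial inequality~\eqref{eq:claimcombi+}), but as written it is an outline rather than a proof: the decisive step is precisely the one you defer as ``where essentially all the work lies''. What must be established is a quantitative recursion of the type the paper derives in Step~2, namely $S_{j+1}^{k+1}\lesssim\sum_{l=0}^{k-j-1}S_{j+1}^{l+j+1}+\sum_{i=0}^{j}\sum_{l=j+1}^{k+1}S_i^{l}$ (see~\eqref{eq:boundSjk+}), with an implicit constant $C\sim1$ uniform in $T$, in the moments of $\rho$, \emph{and in $k,j$}, so that the bound propagates as $C^{k+1}$ rather than with a $k$-dependent loss. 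Obtaining it requires the specific chain of manipulations that you do not carry out: summing~\eqref{eq:satif3} with the operator perturbed on the \emph{outer} index set $G$, the bound $|C_{S\cup U\|G\setminus U}|\lesssim\mathds1_{J_S}\mathds1_{J_{U\|G\setminus U}}$ together with the disjointness of the sets $J_{U\|G\setminus U}$, $U\subset G$, the bound $\sum_S\mathds1_{J_S}\lesssim1$ coming from~\eqref{eq:boundrho} (this is also where the restriction $|S|\le\Gamma$ enters), Cauchy--Schwarz, the decomposition $\nabla\delta_\xi^{F\cup(G\setminus U)}\phi_T^S=\sum_{R\subset S}\nabla\delta_\xi^{F\cup R\cup(G\setminus U)}\phi_T$, and~\eqref{eq:claimcombi+} to convert constrained sums back into the quantities $S_i^l$, followed by the changes of variables that identify which $(i,l)$ actually occur. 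Asserting that ``no term of complexity $\ge k+2$, and no term of complexity $k+1$ with too many coherent indices, survives'' is exactly the claim to be proved, not a consequence of the $k=2$ computation; so the proposal has a genuine gap at its core, even though its skeleton (including the justification of absolute convergence via Lemmas~\ref{lem:finiteT} and~\ref{lem:convfinT}) is the right one.

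There is also an indexing slip relative to the statement~\eqref{eq:boundS} you are proving. With that definition, Lemma~\ref{lem:aprioriproba} gives $S_0^k=\E[\sum_{|G|=k}|\nabla\delta^G\phi_T|^2]$, whereas $S_k^k=\E[|\sum_{|F|=k}\nabla\delta^F\phi_T|^2]$ is the fully coherent sum --- the hardest case, obtained \emph{last} (it is the analogue of Step~4 of Lemma~\ref{lem:improvedap2}, using the unperturbed-operator form of the equation), not a known starting point. Your claims that ``$S_k^k=\E[\sum_{|F|=k}|\nabla\delta^F\phi_T|^2]$ is precisely Lemma~\ref{lem:aprioriproba}'' and that $S^{k+1}_{k+1}$ is already known from that lemma are therefore false under the proposition's notation; you are implicitly using the convention of~\eqref{eq:fam-ener} in Section~\ref{sec:strategy}, where the roles of $j$ and $k-j$ are swapped. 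The fix is merely to start the inner induction at $j=0$ and increase the number of coherent indices up to $j=k+1$, but as written the stated base case and direction of the inner induction do not match the quantity $S_j^k$ being estimated.
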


\begin{proof}
We proceed by induction and split the proof into two steps.

\medskip

\step{1} Preliminary.\\
For $k=j=0$, the estimate $S_0^0=\E[|\nabla\phi_T+\xi|^2]\lesssim1$ reduces to the energy estimate for the modified corrector. (Note also that the estimate $S_1^1\lesssim1$ already follows from Lemma~\ref{lem:apjust1} --- but this will not be used here.)
We now argue by a double induction argument. Since the result is proven for $k=0$, we may indeed argue by induction on $k$: we assume that $S_j^{k'}\le C^{k'+1}$ for all $0\le j\le k'$ and for all $0\le k'\le k$, and shall prove that $S_j^{k+1}\le C^{k+2}$ for all $0\le j\le k+1$. 
Since Lemma~\ref{lem:aprioriproba} implies the desired result for $j=0$, we may as well argue by induction on $j$: we further assume that $S_{j'}^{k+1}\le C^{k+2}$ for all $0\le j'\le j$, for some $0\le j<k+1$, and shall prove that $S_{j+1}^{k+1}\le C^{k+2}$.

Before we turn to Step~2, we state another combinatorial inequality we shall need in the proof.
Let $G,S\subset \N$ be finite fixed disjoint subsets. We claim that
\begin{align}\label{eq:claimcombi+}
\bigg|\sum_{|F|=k\atop F\cap (G\cup S)=\varnothing}\nabla\delta_\xi^{F\cup G}\phi_T\bigg|\le\sum_{l=0}^{|S|}\sum_{|L|=l\atop L\subset S}\bigg|\sum_{|F|=k-l\atop F\cap (G\cup L)=\varnothing}\nabla\delta_\xi^{F\cup G\cup L}\phi_T\bigg|.
\end{align}
We first rewrite
\begin{align*}
S_{G,S}^k:=\sum_{|F|=k\atop F\cap (G\cup S)=\varnothing}\nabla\delta_\xi^{F\cup G}\phi_T=\sum_{|F|=k\atop F\cap G=\varnothing}\mathds1_{F\cap S=\varnothing}\nabla\delta_\xi^{F\cup G}\phi_T,
\end{align*}
where we can decompose, by the usual inclusion-exclusion argument,
\begin{align*}
\mathds1_{F\cap S=\varnothing}=1-\mathds1_{F\cap S\ne\varnothing}=1-\sum_{l=1}^{|S|}(-1)^{l+1}\sum_{|L|=l\atop L\subset S}\mathds1_{L\subset F}=\sum_{l=0}^{|S|}(-1)^{l}\sum_{|L|=l\atop L\subset S}\mathds1_{L\subset F},
\end{align*}
so that $S^k_{G,S}$ becomes, by a change of variables,
\begin{align*}
S_{G,S}^k&=\sum_{l=0}^{|S|}(-1)^{l}\sum_{|L|=l\atop L\subset S}\sum_{|F|=k\atop F\cap G=\varnothing}\mathds1_{L\subset F}\nabla\delta_\xi^{F\cup G}\phi_T=\sum_{l=0}^{|S|}(-1)^{l}\sum_{|L|=l\atop L\subset S}\sum_{|F|=k-l\atop F\cap (G\cup L)=\varnothing}\nabla\delta_\xi^{F\cup G\cup L}\phi_T,
\end{align*}
and the claim~\eqref{eq:claimcombi+} then follows from the triangle inequality.

\medskip

\step{2} Bound on $S_{j+1}^{k+1}$.\\
Let $G\subset \N$ be a finite subset.
By Lemma~\ref{lem:finiteT} we may sum equation~\eqref{eq:satif3} of Lemma~\ref{lem:eqsatif} (with $H=\varnothing$) for $\delta^{F\cup G} \phi_T$
over $|F|=j+1$, $F\cap G=\varnothing$, which yields the following equation for the sum $\sum_{|F|=j+1\atop F\cap G=\varnothing}\delta^{F\cup G}\phi_T$ on $\R^d$:
\begin{align*}
&\frac1T\sum_{|F|=j+1\atop F\cap G=\varnothing}\delta_\xi^{F\cup G}\phi_T-\nabla\cdot A^{G}\nabla\sum_{|F|=j+1\atop F\cap G=\varnothing}\delta_\xi^{F\cup G}\phi_T\\
&=\nabla\cdot\sum_{|F|=j+1\atop F\cap G=\varnothing}\sum_{S\subset F}\sum_{U\subset G}(-1)^{|S|+|U|+1} C_{S\cup U\|G\setminus U}\nabla\delta_\xi^{(F\setminus S)\cup (G\setminus U)}\phi_T^S\\
&=\nabla\cdot\sum_{U\subset G}\sum_{|S|\le j+1\atop S\cap G=\varnothing}(-1)^{|S|+|U|+1}C_{S\cup U\| G\setminus U}\sum_{|F|=j+1-|S|\atop F\cap (G\cup S)=\varnothing}\nabla\delta_\xi^{F\cup (G\setminus U)}\phi_T^S.
\end{align*}
We test this equation with $\chi_N\sum_{|F|=j+1,F\cap G=\varnothing}\delta_\xi^{F\cup G}\phi_T$, where $\chi_N$ is a cut-off function for $B_N$ in $B_{2N}$ such that $|\nabla \chi_N|\lesssim 1/N$, 
we take the sum over $|G|=(k+1)-(j+1)=k-j$ (which is again absolutely converging by Lemma~\ref{lem:finiteT}), take the expectation, and then use stationarity to pass to the limit $N\uparrow\infty$, as in the proof of~\eqref{eq:REF-ERG} in Lemma~\ref{lem:aprioriproba}. 
With the notation $a\wedge b=\min\{a,b\}$, this yields
\begin{align*}
S_{j+1}^{k+1}&\lesssim \E\bigg[\sum_{|G|=k-j}\bigg|\sum_{U\subset G}\sum_{|S|\le (j+1)\wedge \Gamma\atop S\cap G=\varnothing,S\cup U\ne\varnothing}(-1)^{|S|+|U|+1}C_{S\cup U\|G\setminus U}\sum_{|F|=j+1-|S|\atop F\cap (G\cup S)=\varnothing} \nabla\delta_\xi^{F\cup (G\setminus U)}\phi_T^S\bigg|^2\bigg],
\end{align*}
where the additional restriction $S\cup U\ne\varnothing$ follows from the fact that $C_{S\cup U\|G\setminus U}$ vanishes identically otherwise
and where we have further restricted to $|S|\le \Gamma$ since by assumption \eqref{eq:boundrho} there is no intersection of degree larger than $\Gamma$. Since we have $|C_{S\cup U\|G\setminus U}|\lesssim\mathds1_{J_S}\mathds1_{J_{U\|G\setminus U}}$ (using here notation $\mathds1_{J_\varnothing}=1$), and the $J_{U\|G\setminus U}$'s are disjoint for $U\subset G$ (for fixed $G$), we deduce
\begin{align*}
S_{j+1}^{k+1}&\lesssim \E\bigg[\sum_{|G|=k-j}\sum_{U\subset G}\mathds1_{J_{U\|G\setminus U}}\bigg(\sum_{|S|\le (j+1)\wedge \Gamma\atop S\cap G=\varnothing,S\cup U\ne\varnothing}\mathds1_{J_S}\bigg|\sum_{|F|=j+1-|S|\atop F\cap (G\cup S)=\varnothing} \nabla\delta_\xi^{F\cup (G\setminus U)}\phi_T^S\bigg|\bigg)^2\bigg].
\end{align*}
As for \eqref{eq:firstboundsumchi+}, we have $\sum_{|S|\le j+1}\mathds1_{J_S}(0)\lesssim1$, so that by the Cauchy-Schwarz inequality,
this estimate turns into
\begin{align*}
S_{j+1}^{k+1}&\lesssim \E\bigg[\sum_{|G|=k-j}\sum_{U\subset G}\mathds1_{J_U}\sum_{|S|\le (j+1)\wedge \Gamma \atop S\cap G=\varnothing,S\cup U\ne\varnothing}\mathds1_{J_S}\bigg|\sum_{|F|=j+1-|S|\atop F\cap (G\cup S)=\varnothing} \nabla\delta_\xi^{F\cup (G\setminus U)}\phi_T^S\bigg|^2\bigg].
\end{align*}
Now using the decomposition $\nabla\delta_\xi^{F\cup (G\setminus U)}\phi_T^S=\sum_{R\subset S}\nabla\delta_\xi^{F\cup R\cup (G\setminus U)}\phi_T$ (that is, \eqref{eq:defdeltaxi} with $H=\varnothing$, $G\leadsto F\cup (G\setminus U)$ and $F\leadsto S$), together with the observation that
\[\mathds1_{J_S}\bigg(\sum_{R\subset S}a_R\bigg)^2\le \mathds1_{J_S}\bigg(\sum_{R\subset S}\mathds1_{J_R}a_R\bigg)^2\lesssim \mathds1_{J_S}\sum_{R\subset S}a_R^2,\]
which follows again from combining the Cauchy-Schwarz inequality with inequality $\sum_{|R|\le j+1}\mathds1_{J_R}\lesssim1$, we obtain
\begin{align*}
S_{j+1}^{k+1}&\lesssim \E\bigg[\sum_{|G|=k-j}\sum_{U\subset G}\mathds1_{J_U}\sum_{|S|\le (j+1)\wedge \Gamma \atop S\cap G=\varnothing,S\cup U\ne\varnothing}\mathds1_{J_S}\sum_{R\subset S}\bigg|\sum_{|F|=j+1-|S|\atop F\cap (G\cup S)=\varnothing} \nabla\delta_\xi^{F\cup R\cup (G\setminus U)}\phi_T\bigg|^2\bigg]\\
&\le \sum_{i=0}^{(j+1)\wedge \Gamma}\E\bigg[\sum_{|U|\le k-j}\mathds1_{J_U}\sum_{|G|\le k-j\atop G\cap U=\varnothing}\delta^G_{ijk}\sum_{|S|=i\atop S\cap G=\varnothing}\mathds1_{J_S}\sum_{R\subset S}\bigg|\sum_{|F|=j+1-i\atop F\cap (G\cup U\cup S)=\varnothing} \nabla\delta_\xi^{F\cup R\cup G}\phi_T\bigg|^2\bigg],
\end{align*}
where we have set $\delta^G_{ijk}=0$ when simultaneously $|G|=k-j$ and $i=0$, and $\delta^G_{ijk}=1$ otherwise. 
By \eqref{eq:claimcombi+} and the inequality $\sum_{|L|\le j+1}\mathds1_{J_L}\lesssim1$, for any $R\subset S$ and any $G\cap U=\varnothing=S\cap G$, we have
\begin{align}
\mathds1_{J_S\cap J_U}\bigg|\sum_{|F|=j+1-i\atop F\cap (G\cup U\cup S)=\varnothing} \nabla\delta_\xi^{F\cup R\cup G}\phi_T\bigg|^2&\le\bigg( \sum_{l=0}^{j+1-i}\sum_{|L|=l\atop L\subset U\cup S\setminus R}\mathds1_{J_L}\bigg|\sum_{|F|=j+1-i-l\atop F\cap (L\cup R\cup G)=\varnothing}\nabla\delta_\xi^{F\cup L\cup R\cup G}\phi_T \bigg|\bigg)^2\nonumber\\
&\lesssim \sum_{l=0}^{j+1-i}\sum_{|L|=l\atop L\subset U\cup S\setminus R}\mathds1_{J_L}\bigg|\sum_{|F|=j+1-i-l\atop F\cap (L\cup R\cup G)=\varnothing}\nabla\delta_\xi^{F\cup L\cup R\cup G}\phi_T \bigg|^2,\label{eq:inequtrplop}
\end{align}
and hence we obtain, using $\sum_{|U|\le k}\mathds1_{J_U}\lesssim1$ again, and using $U\cup (S\setminus R)\subset \N\setminus (G\cup R)$,
\begin{align*}
S_{j+1}^{k+1}&\lesssim \sum_{i=0}^{(j+1)\wedge \Gamma}\sum_{l=0}^{j+1-i}\E\bigg[\sum_{|U|\le k-j}\mathds1_{J_U}\sum_{|G|\le k-j\atop G\cap U=\varnothing}\delta^G_{ijk}
\\ &\qquad\qquad\qquad \qquad \sum_{|S|=i\atop S\cap G=\varnothing}\mathds1_{J_S}\sum_{R\subset S}\sum_{|L|=l\atop L\subset U\cup S\setminus R}\mathds1_{J_L}\bigg|\sum_{|F|=j+1-i-l\atop F\cap (L\cup R\cup G)=\varnothing} \nabla\delta_\xi^{F\cup L\cup R\cup G}\phi_T\bigg|^2\bigg]\\
&\lesssim \sum_{i=0}^{(j+1)\wedge \Gamma}\sum_{l=0}^{j+1-i}\E\bigg[\sum_{|G|\le k-j}\delta^G_{ijk}\sum_{|R|\le i\atop R\cap G=\varnothing}\mathds1_{J_R}\sum_{|L|= l\atop L\cap (G\cup R)=\varnothing}\mathds1_{J_L}\bigg|\sum_{|F|=j+1-i-l\atop F\cap (L\cup R\cup G)=\varnothing} \nabla\delta_\xi^{F\cup L\cup R\cup G}\phi_T\bigg|^2\bigg].
\end{align*}
Successively using $\Gamma\lesssim 1$ in the form of 
$\sum_{i=0}^{(j+1)\wedge \Gamma}\sum_{|R|\leq i} \,\lesssim \, \sum_{i=0}^{(j+1)\wedge \Gamma}\sum_{|R|=i}$ and $\sum_{L} \mathds1_{J_L}\lesssim 1$, we obtain by the change of variables $L\cup R \leadsto R$,
\begin{align*}
S_{j+1}^{k+1}&\lesssim \sum_{i=0}^{(j+1)\wedge \Gamma}\sum_{l=0}^{j+1-i}\E\bigg[\sum_{|R|= i}\mathds1_{J_R}\sum_{|L|= l\atop L\cap R=\varnothing}\mathds1_{J_L}\sum_{|G|\le k-j \atop G\cap (L\cup R)=\varnothing}\delta^G_{ijk}\bigg|\sum_{|F|=j+1-i-l\atop F\cap (L\cup R\cup G)=\varnothing} \nabla\delta_\xi^{F\cup L\cup R\cup G}\phi_T\bigg|^2\bigg]\\
&\lesssim \sum_{i=0}^{(j+1)\wedge \Gamma}\sum_{l=0}^{k-j}\E\bigg[\sum_{|R|= i}\mathds1_{J_R}\sum_{|G|=l\atop R\cap G=\varnothing}\delta^G_{ijk}\bigg|\sum_{|F|=j+1-i\atop F\cap (G\cup R)=\varnothing} \nabla\delta_\xi^{F\cup R\cup G}\phi_T\bigg|^2\bigg],
\end{align*}
or equivalently, recalling the definition of the $\delta_{ijk}^G$'s and of the $S_j^k$'s,
\begin{align}\label{eq:boundSjk+}
S_{j+1}^{k+1}&\lesssim \sum_{l=0}^{k-j-1}S_{j+1}^{l+j+1}+\sum_{i=1}^{j+1}\sum_{l=0}^{k-j}\E\bigg[\sum_{|R|= i}\mathds1_{J_R}\sum_{|G|=l\atop R\cap G=\varnothing}\bigg|\sum_{|F|=j+1-i\atop F\cap (G\cup R)=\varnothing} \nabla\delta_\xi^{F\cup R\cup G}\phi_T\bigg|^2\bigg].
\end{align}
Using again the fact that $\sum_{|R|=i}\mathds1_{J_R}\lesssim1$, we can bound
\begin{align*}
\E\bigg[\sum_{|R|= i}\mathds1_{J_R}\sum_{|G|=l\atop R\cap G=\varnothing}\bigg|\sum_{|F|=j+1-i\atop F\cap (G\cup R)=\varnothing} \nabla\delta_\xi^{F\cup R\cup G}\phi_T\bigg|^2\bigg]&=\E\bigg[\sum_{|G|=i+l}\sum_{R\subset G\atop |R|=i}\mathds1_{J_R}\bigg|\sum_{|F|=j+1-i\atop F\cap G=\varnothing} \nabla\delta_\xi^{F\cup G}\phi_T\bigg|^2\bigg]\\
&\lesssim\E\bigg[\sum_{|G|=i+l}\bigg|\sum_{|F|=j+1-i\atop F\cap G=\varnothing} \nabla\delta_\xi^{F\cup G}\phi_T\bigg|^2\bigg]=S_{j+1-i}^{l+j+1},
\end{align*}
so that~\eqref{eq:boundSjk+} turns into
\begin{align*}
S_{j+1}^{k+1}&\lesssim \sum_{l=0}^{k-j-1}S_{j+1}^{l+j+1}+\sum_{i=1}^{j+1}\sum_{l=0}^{k-j}S_{j+1-i}^{l+j+1}=\sum_{l=0}^{k}S_{j+1}^{l}+\sum_{i=0}^{j}\sum_{l=j+1}^{k+1}S_{i}^{l}.
\end{align*}
As the right-hand side only involves the $S^{k'}_{j'}$'s with $k'\le k$ or with $k'=k+1$, $j'\le j$, we conclude that  $S^{k+1}_{j+1}\le C^{k+2}$ by the induction assumption.
\end{proof}

\section{Proofs of the main results}\label{sec:analytic}

In this section, we prove the analyticity of the perturbed coefficients (Theorem~\ref{th:analytic}) and the analytical formulas for the derivatives (Corollary~\ref{cor:analytic}), from which we further deduce the Clausius-Mossotti formulas (Corollaries~\ref{cor:cm} and~\ref{cor:cm2}).

\subsection{Approximate derivatives at $p=0$}\label{chap:approxder}

In this subsection we devise analytical formulas for the derivatives of the map $p\mapsto A_T^{(p)}$ at $p=0$ under the assumptions that $T>0$ and $\E[\rho(Q)^s]<\infty$ for all $s\ge 1$. We shall show in particular that $A_T^{(p)}$ is $C^\infty$ at $p=0$. These results, which rely on the improved energy estimates of Proposition~\ref{prop:apriori}, constitute the core of the proof of Theorem~\ref{th:analytic}.

\medskip

Fix some direction $\xi\in\R^d$, $|\xi|=1$.
As in Section~\ref{sec:strategy}, we consider the exact and approximate differences
\[\Delta^{(p)}:=\xi\cdot (A^{(p)}_{\hom}-A_{\hom})\xi,\qquad\Delta_{T}^{(p)}:=\xi\cdot (A^{(p)}_{T}-A_{T})\xi,\]
and we recall that $\lim_T\Delta_T^{(p)}=\Delta^{(p)}$ follows from~\eqref{eq:approxhomcoeff}. By Lemma~\ref{lem:firstdecomp} the approximate difference satisfies
\begin{align}
\Delta_T^{(p)}&=\E[(\nabla\phi_T+\xi)\cdot C^{(p)}(\nabla\phi_T^{(p)}+\xi)]\label{eq:err1wd-2}.
\end{align}

By assumption~\eqref{eq:boundrho}, we may now appeal to the inclusion-exclusion formula in the form of~\eqref{eq:excl2}, so that~\eqref{eq:err1wd-2} turns into
\[\Delta_T^{(p)}=\sum_{j=1}^\Gamma(-1)^{j+1}\sum_{|F|=j}\E\left[(\nabla\phi_T+\xi)\cdot C_{F}(\nabla\phi_T^{E^{(p)}\cup F}+\xi)\mathds1_{F\subset E^{(p)}}\right],\]
where the sum is absolutely convergent by~\eqref{eq:convfinTS2} in Lemma~\ref{lem:convfinT}. Using that the event $[F\subset  E^{(p)}]$ is by definition independent of the rest of the summand, and that we have i.i.d. Bernoulli variables of parameter $p$, this identity takes the form
\begin{align}\label{eq:deverr-}
\Delta_T^{(p)}=\sum_{j=1}^\Gamma(-1)^{j+1}p^{j}\,\E\left[\sum_{|F|=j}(\nabla\phi_T+\xi)\cdot C_{F}(\nabla\phi_T^{E^{(p)}\cup F}+\xi)\right],
\end{align}
which can be further decomposed as
\begin{align*}
\Delta_T^{(p)}&=\sum_{j=1}^\Gamma(-1)^{j+1}p^{j}\,\E\left[\sum_{|F|=j}(\nabla\phi_T+\xi)\cdot C_{F}(\nabla\phi_T^{F}+\xi)\right]\\
&\qquad+\sum_{j=1}^\Gamma(-1)^{j+1}p^{j}\,\E\left[\sum_{|F|=j}(\nabla\phi_T+\xi)\cdot C_{F}\nabla(\phi_T^{E^{(p)}\cup F}-\phi_T^{F})\right],
\end{align*}
where the sums are still absolutely convergent by~\eqref{eq:convfinTS2} in Lemma~\ref{lem:convfinT}. The first term of the first sum (i.e. corresponding to the choice $j=1$) is of order $p$ and coincides with the argument of the limit in \eqref{eq:formderdisj0} for $k=1$. The second sum can be rewritten as a sum of errors of order at least $p^2$, which can then be combined with the corresponding (higher-order) terms in the first sum, and an induction argument finally allows us to prove the following decomposition:

\begin{lem}\label{lem:decompdiffdisj}
Assume that $\E[\rho(Q)^s]<\infty$ for all $s\ge1$. For any $k\ge0$ and any $p\in[0,1]$, we have
\begin{align}\label{eq:toprdecnondisj}
\Delta_T^{(p)}&=\sum_{j=1}^kp^j\Delta_T^j+\sum_{j=k+1}^{k+\Gamma}p^jE_T^{(p),j,k}
\end{align}
where, for all $j> k\ge0$, the approximate derivatives $\Delta_T^j$ and the 
errors $E_T^{(p),j,k}$ are given by
\begin{eqnarray}\label{eq:rewriteSkdisj1}
\Delta_T^j&:=&\sum_{|F|=j}\sum_{G\subset F}(-1)^{|F\setminus G|+1}\E\left[\nabla\delta_\xi^G\phi_T\cdot C_{F\setminus G\| G}(\nabla\phi_T^{F}+\xi)\right],
\\
\label{eq:rewriteSkdisj1err}
E_T^{(p),j,k}&:=&\sum_{|F|=j}\sum_{G\subset F\atop |G|\le k}(-1)^{|F\setminus G|+1}\E\left[\nabla\delta_\xi^G\phi_T\cdot C_{F\setminus G\| G}(\nabla\phi_T^{E^{(p)}\cup F}+\xi)\right],
\end{eqnarray}
and the sums $\sum_{|F|=j}\sum_{G\subset F}$ in~\eqref{eq:rewriteSkdisj1} and~\eqref{eq:rewriteSkdisj1err} are absolutely convergent for fixed $T$.\qed
\end{lem}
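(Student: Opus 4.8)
The plan is to prove Lemma~\ref{lem:decompdiffdisj} by induction on $k$, starting from the decomposition of $\Delta_T^{(p)}$ already derived just above the statement, namely
\begin{align*}
\Delta_T^{(p)}&=\sum_{j=1}^\Gamma(-1)^{j+1}p^{j}\,\E\Big[\sum_{|F|=j}(\nabla\phi_T+\xi)\cdot C_{F}(\nabla\phi_T^{F}+\xi)\Big]\\
&\qquad+\sum_{j=1}^\Gamma(-1)^{j+1}p^{j}\,\E\Big[\sum_{|F|=j}(\nabla\phi_T+\xi)\cdot C_{F}\nabla(\phi_T^{E^{(p)}\cup F}-\phi_T^{F})\Big].
\end{align*}
The base case $k=0$ is read off directly from the identity~\eqref{eq:deverr-}, after replacing $\nabla\phi_T+\xi=\nabla\delta_\xi^\varnothing\phi_T$ and using $C_F=C_{F\|\varnothing}$; this gives $\Delta_T^{(p)}=\sum_{j=1}^\Gamma p^j E_T^{(p),j,0}$ with $E_T^{(p),j,0}$ as in~\eqref{eq:rewriteSkdisj1err} (only the $G=\varnothing$ term survives the constraint $|G|\le0$). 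Absolute convergence for fixed $T$ is guaranteed throughout by~\eqref{eq:convfinTS}--\eqref{eq:convfinTS2} in Lemma~\ref{lem:convfinT}, which is what licenses all the reorderings below.

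Next I would carry out the induction step: assume~\eqref{eq:toprdecnondisj} holds for some $k\ge0$, and split each error term $E_T^{(p),j,k}$ (for $j\ge k+1$) into its "diagonal" part, where one replaces $\phi_T^{E^{(p)}\cup F}$ by $\phi_T^{F}$, and a remainder involving $\nabla(\phi_T^{E^{(p)}\cup F}-\phi_T^{F})$. The diagonal part, when $j=k+1$, is precisely $\Delta_T^{k+1}$ as defined in~\eqref{eq:rewriteSkdisj1}; the diagonal parts with $j>k+1$ must be absorbed into the $(k+1)$-st order errors by enlarging the index set $\{G\subset F:|G|\le k\}$ to $\{G\subset F:|G|\le k+1\}$. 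The remainder terms carrying $\nabla(\phi_T^{E^{(p)}\cup F}-\phi_T^{F})$ are then re-expressed using the identity $\nabla\phi_T^{E^{(p)}\cup F}+\xi=\sum_{S\subset E^{(p)}\setminus F}\nabla\delta_\xi^{S\cup\varnothing}(\ldots)$-type expansions, more precisely via~\eqref{eq:defdeltaxi1}, so that the difference $\phi_T^{E^{(p)}\cup F}-\phi_T^F$ becomes a sum over nonempty $S\subset E^{(p)}$, $S\cap F=\varnothing$, of $\nabla\delta_\xi^{S}\phi_T^F$ terms. At this point one uses the independence of the Bernoulli variables~\eqref{prod-struc}: the indicator that a new index lies in $E^{(p)}$ contributes a factor $p$ and raises the power of $p$ in each such term, which is exactly the bookkeeping needed to pass from errors of order $\ge k+1$ to the explicit term $p^{k+1}\Delta_T^{k+1}$ plus errors of order $\ge k+2$. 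The combinatorial identity that must be checked is that, after all these manipulations and after recombining the "old" first sum $\sum_j(-1)^{j+1}p^j\E[\ldots\nabla\phi_T^F\ldots]$ with the newly produced diagonal contributions, the coefficient of $p^{k+1}$ collapses to the alternating sum $\sum_{|F|=k+1}\sum_{G\subset F}(-1)^{|F\setminus G|+1}\E[\nabla\delta_\xi^G\phi_T\cdot C_{F\setminus G\|G}(\nabla\phi_T^F+\xi)]$; this is where the inclusion–exclusion identities~\eqref{eq:excl3.1}--\eqref{eq:excl3.3} and the binomial cancellation $\sum_{S\subset H}(-1)^{|H\setminus S|}=\mathds1_{H=\varnothing}$ enter, much as in the proof of Lemma~\ref{lem:eqsatif}.

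The main obstacle is precisely this combinatorial reshuffling: one is simultaneously tracking three kinds of index sets — the perturbed inclusions $F$ being "pulled out" explicitly, the subsets $G\subset F$ recording difference operators already applied, and the subsets $S\subset E^{(p)}\setminus F$ recording further Bernoulli factors being generated — and one must verify that the bookkeeping of signs $(-1)^{|F\setminus G|+1}$ and of the overlap corrections $C_{F\setminus G\|G}$ (which replace the naive $C_{F\setminus G}$ precisely because inclusions may overlap) is consistent across the induction. A clean way to organize this is to first establish, as a separate elementary lemma, the combinatorial identity expressing $C_F(\nabla\phi_T^{E^{(p)}\cup F}+\xi)$ in terms of the $C_{F'\|G}\,\nabla\delta_\xi^G\phi_T^{F'}$ for $F'\subset F$, $G\subset F'$ — in other words, to decouple the purely set-theoretic content from the analytic content — and then feed it into the induction. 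Once the $k$-th order identity is in hand, the passage from $j> k$ to $j> k+1$ in the error range, and the truncation of the inclusion-exclusion at $\Gamma$ (so that $\sum_{j=k+1}^{k+\Gamma}$ rather than $\sum_{j=k+1}^{\infty}$ suffices, because $C_F\equiv0$ for $|F|>\Gamma$), are routine. For the disjoint case one recovers Lemma~\ref{lem:decompdiffdisj-sim} simply by noting $C_{F\|G}=0$ unless $F=\{n\}$, which kills every term with $|F\setminus G|\ge2$.
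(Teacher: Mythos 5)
Your base case and the splitting of $E_T^{(p),k+1,k}$ into the ``diagonal'' part (which is indeed exactly $\Delta_T^{k+1}$, since the $G=F$ summand vanishes) plus a remainder carrying $\nabla(\phi_T^{E^{(p)}\cup F}-\phi_T^{F})$ match the paper. The genuine gap is in how you treat that remainder. You propose to expand it via \eqref{eq:defdeltaxi1} as $\nabla(\phi_T^{E^{(p)}\cup F}-\phi_T^{F})=\sum_{S\subset E^{(p)}\setminus F,\,S\ne\varnothing}\nabla\delta_\xi^{S}\phi_T^{F}$ and then pull out a factor $p^{|S|}$ from each indicator $\mathds1_{S\subset E^{(p)}}$ by independence. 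But $E^{(p)}$ is an infinite set, so this is an infinite series over subsets $S$ of unbounded cardinality --- precisely the pathwise cluster expansion of the corrector that the paper points out (after \eqref{eq:clusterphiT}) is \emph{not} expected to be valid under mere stationarity and ergodicity. The absolute convergence needed to reorder the sum and apply \eqref{prod-struc} term by term is not provided by Lemmas~\ref{lem:finiteT} and~\ref{lem:convfinT}: those control sums over subsets of a \emph{fixed} finite cardinality, with constants $C_T^{k}\E[\rho(B_R)^{sk}]$ that blow up with the cardinality and with the moments of $\rho$, so the sum over $|S|\to\infty$ is uncontrolled. Moreover, even to extract only the coefficient of $p^{k+1}$, you would have to resum the tail $|S|\ge 2$ back into errors of the stated form \eqref{eq:rewriteSkdisj1err}, which involve the full corrector $\phi_T^{E^{(p)}\cup F}$, i.e.\ you would need that same unjustified resummation. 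Note also that, as Remark~\ref{rem:ALB} and the discussion after \eqref{eq:baddecomp0} stress, a naive bound on the remainder only gives order $\sqrt{p}$, so the extra power of $p$ cannot be obtained ``for free''.

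The paper's mechanism, which your proposal is missing, is a duality/testing argument rather than an expansion of the corrector: one writes the equation \eqref{eq:satif1} for $\delta_\xi^{F}\phi_T$ with coefficients $A^{F}$ and the equation for $\phi_T^{E^{(p)}\cup F}-\phi_T^{F}$ with the \emph{same} operator $A^{F}$ (right-hand side $\nabla\cdot C^{(p)}_{\|F}(\nabla\phi_T^{E^{(p)}\cup F}+\xi)$), tests each with the other, and uses symmetry of the coefficients to convert the remainder $G_T^{(p),k}$ into $\sum_{|F|=k+1}\E\big[\nabla\delta_\xi^{F}\phi_T\cdot C^{(p)}_{\|F}(\nabla\phi_T^{E^{(p)}\cup F}+\xi)\big]$. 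The extra powers of $p$ then come from the inclusion--exclusion formula \eqref{eq:excl2} applied to the \emph{coefficient} $C^{(p)}_{\|F}$ --- a finite sum of length at most $\Gamma$ thanks to \eqref{eq:boundrho} --- together with \eqref{prod-struc}; this is what produces precisely the new error terms with $|G|=k+1$ at orders $k+2,\dots,k+\Gamma+1$, while the old errors $E_T^{(p),j,k}$, $j\ge k+2$, are left untouched and supply the $|G|\le k$ part of $E_T^{(p),j,k+1}$. Your bookkeeping on this last point is also off: the ``diagonal parts'' of the higher-order errors are deterministic terms of order $p^{j}$, $j\ge k+2$, which do not occur in \eqref{eq:toprdecnondisj}, so splitting those errors and ``absorbing'' them by enlarging $\{|G|\le k\}$ to $\{|G|\le k+1\}$ does not reproduce the target identity.
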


\begin{proof}
We proceed by induction. For $k=0$, \eqref{eq:toprdecnondisj} reduces to~\eqref{eq:deverr-}.
Assume now that~\eqref{eq:toprdecnondisj} holds true for some $k\ge0$.

First of all,  we decompose $E_T^{(p),k+1,k}$ as follows:
\begin{align}
E_T^{(p),k+1,k}&=\Delta_T^{k+1}+G^{(p),k}_T,\label{eq:simpltermb}
\end{align}
where the error reads
\begin{align*}
G^{(p),k}_T:=~&\sum_{|F|=k+1}\sum_{G\subset F\atop |G|\le k}(-1)^{|F\setminus G|+1}\E\left[\nabla\delta_\xi^G\phi_T\cdot C_{F\setminus G\| G}\nabla(\phi_T^{E^{(p)}\cup F}-\phi_T^{ F})\right]\\
=~&\sum_{|F|=k+1}\sum_{j=1}^{k+1}(-1)^{j+1}\sum_{G\subset F\atop |G|=j}\E\left[\nabla\delta_\xi^{F\setminus G}\phi_T\cdot C_{G\|F\setminus G}\nabla(\phi_T^{E^{(p)}\cup F}-\phi_T^{ F})\right],
\end{align*}
since the summand for $G=F$ in \eqref{eq:rewriteSkdisj1} vanishes (cf. $C_{\varnothing \|F}\equiv 0$).

Given $|F|=k+1$, recall that \eqref{eq:satif1} in Lemma~\ref{lem:eqsatif} (for $H=\varnothing$) asserts that \textcolor{red}{$\delta^F_\xi\phi_T$} solves
\[\frac1T\delta_\xi^{F}\phi_T-\nabla\cdot A^{F}\nabla\delta_\xi^{F}\phi_T=\sum_{j=1}^{k+1}(-1)^{j+1}\sum_{G\subset F\atop |G|=j}\nabla\cdot C_{G\|F\setminus G}\nabla\delta_\xi^{F\setminus G}\phi_T,\]
and also recall that since 
$A^{E^{(p)}\cup F}=A^F+C^{(p)}_{\|F}$, $\phi_T^{E^{(p)}\cup F}-\phi_T^F$ solves
\[\frac1T(\phi_T^{E^{(p)}\cup F}-\phi_T^{F})-\nabla\cdot A^{F}\nabla(\phi_T^{E^{(p)}\cup F}-\phi_T^{F})=\nabla\cdot C^{(p)}_{\|F}(\nabla\phi_T^{E^{(p)}\cup F}+\xi).\]
Successively testing these equations with $\phi_T^{E^{(p)}\cup F}-\phi_T^F$ and $\delta^F\phi_T$ respectively (as for the proof of~\eqref{eq:REF-ERG} in Lemma~\ref{lem:aprioriproba}, still noting that all the sums converge absolutely by Lemmas~\ref{lem:finiteT} and~\ref{lem:convfinT}), we get
\begin{align*}
G_T^{(p),k}&=-\frac1T\sum_{|F|=k+1}\E\left[\delta_\xi^F\phi_T(\phi_T^{E^{(p)}\cup F}-\phi_T^F)\right]-\sum_{|F|=k+1}\E\left[\nabla\delta_\xi^F\phi_T\cdot A^F\nabla(\phi_T^{E^{(p)}\cup F}-\phi_T^F)\right]\\
&=\sum_{|F|=k+1}\E\left[\nabla\delta_\xi^F\phi_T\cdot C^{(p)}_{\|F}(\nabla\phi_T^{E^{(p)}\cup F}+\xi)\right].
\end{align*}
Hence, using the inclusion-exclusion formula~\eqref{eq:excl2} as before (cf.~\eqref{eq:deverr-}) and the independence, this yields
\begin{align*}
G_T^{(p),k}&=\sum_{j=1}^\Gamma(-1)^{j+1}p^j\sum_{|G|=j}\sum_{|F|=k+1\atop G\cap F=\varnothing}\E\left[\nabla\delta_\xi^F\phi_T\cdot C_{G\|F}(\nabla\phi_T^{E^{(p)}\cup F\cup G}+\xi)\right],
\end{align*}
and hence, renumbering the sums,
\begin{align}\label{eq:calerrp}
p^{k+1}G_T^{(p),k}&=\sum_{j=k+2}^{k+\Gamma+1}(-1)^{j-k}p^{j}\sum_{|F|=j}\sum_{G\subset F\atop |G|=k+1}\E\left[\nabla\delta_\xi^{G}\phi_T\cdot C_{F\setminus G\|G}(\nabla\phi_T^{E^{(p)}\cup F}+\xi)\right].
\end{align}
By the induction assumption~\eqref{eq:toprdecnondisj} at order $k$ and the decomposition~\eqref{eq:simpltermb}, we thus have
\begin{align*}
\Delta_T^{(p)}&=\sum_{j=1}^{k+1}p^j\Delta_T^j+p^{k+1} G^{(p),k}_T+\sum_{j=k+2}^{k+\Gamma}p^j\sum_{|F|=j}\sum_{G\subset F\atop |G|\le k}(-1)^{|F\setminus G|+1}\E\left[\nabla\delta_\xi^G\phi_T\cdot C_{F\setminus G\| G}(\nabla\phi_T^{E^{(p)}\cup F}+\xi)\right].
\end{align*}
Combined with~\eqref{eq:calerrp}, this yields
\begin{align*}
\Delta_T^{(p)}&=\sum_{j=1}^{k+1}p^j\Delta_T^j+\sum_{j=k+2}^{k+\Gamma+1}p^j\sum_{|F|=j}\sum_{G\subset F\atop |G|\le k+1}(-1)^{|G|+1}\E\left[\nabla\delta_\xi^G\phi_T\cdot C_{F\setminus G\| G}(\nabla\phi_T^{E^{(p)}\cup F}+\xi)\right],
\end{align*}
that is $\Delta_T^{(p)}=\sum_{j=1}^{k+1}p^j\Delta_T^j+\sum_{j=k+2}^{k+\Gamma+1}p^jE_T^{(p),j,k+1}$, and therefore~\eqref{eq:toprdecnondisj} at step $k+1$.
\end{proof}

We now prove that the approximate derivatives are bounded uniformly in $T$ and in the moments of $\rho$, as a consequence of the improved energy estimates of Proposition~\ref{prop:apimproved}.

\begin{prop}\label{prop:apriori}
Assume that $\E[\rho(Q)^s]<\infty$ for all $s\ge1$. Then, there is a constant $C\sim1$ (independent of $T$ and of the moments of $\rho$) such that, for any $k\ge1$, the approximate $k$-th derivative $\Delta_T^k$ defined in \eqref{eq:rewriteSkdisj1} satisfies
\begin{equation}\label{eq:apriori-unif-k}
|\Delta_T^k|\,\le\, C^k.
\end{equation}
Likewise, for any $j> k\ge1$ and any $p\in[0,1]$, the error $E_T^{(p),j,k}$ defined in~\eqref{eq:rewriteSkdisj1err} satisfies
\begin{equation}\label{eq:apriori-unif-k-err}
|E_T^{(p),j,k}|\,\le\, C^j.
\end{equation}
\qed
\end{prop}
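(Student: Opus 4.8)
The plan is to bound $\Delta_T^k$ and $E_T^{(p),j,k}$ by reducing them, via Cauchy--Schwarz and the disjointness structure of the sets $J_{F\setminus G\|G}$, to the improved energy estimates $S_j^k\le C^{k+1}$ of Proposition~\ref{prop:apimproved} together with the basic estimate of Lemma~\ref{lem:aprioriproba}. The starting point is the explicit formula~\eqref{eq:rewriteSkdisj1}:
\[
\Delta_T^k=\sum_{|F|=k}\sum_{G\subset F}(-1)^{|F\setminus G|+1}\E\left[\nabla\delta_\xi^G\phi_T\cdot C_{F\setminus G\| G}(\nabla\phi_T^{F}+\xi)\right].
\]
First I would rewrite $\nabla\phi_T^F+\xi=\sum_{H\subset F}\nabla\delta_\xi^H\phi_T$ (the identity $\sum_{H\subset F}\nabla\delta_\xi^H\phi_T=\nabla\phi_T^F+\xi$ used repeatedly above), so that each term splits into pieces of the form $\E[\nabla\delta_\xi^G\phi_T\cdot C_{F\setminus G\|G}\nabla\delta_\xi^H\phi_T]$. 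Since $|C_{F\setminus G\|G}|\lesssim\mathds1_{J_{F\setminus G\|G}}\le\mathds1_{J_n}$ for any $n\in F\setminus G$ (which is non-empty because the $G=F$ summand vanishes), and since for fixed $F$ the sets $J_{F\setminus G\|G}$ over $G\subsetneq F$ are pairwise disjoint, I can apply Cauchy--Schwarz on the product and the disjointness to control the resulting double sum.

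The key grouping step is to organize the sum over $|F|=k$ according to the "free" indices. Writing $F=G\sqcup E$ with $E=F\setminus G\ne\varnothing$, and using $\mathds1_{J_E\|G}\le\mathds1_{J_n}$ for a chosen $n\in E$, one bounds
\[
|\Delta_T^k|\lesssim\sum_{j=0}^{k-1}\sum_{|G|=j}\E\Big[\mathds1_{\{q_n\in B_R\text{ for some }n\}}\,\big|\nabla\delta_\xi^G\phi_T\big|\,\big|\nabla\phi_T^{G\cup E}+\xi\big|\Big],
\]
and after expanding $\nabla\phi_T^{G\cup E}+\xi=\sum_{S\subset G}\sum_{L\subset E}\nabla\delta_\xi^{S\cup L}\phi_T$ via~\eqref{eq:defdeltaxi1}, one is left with sums that match exactly the quantities $S_{j'}^{k'}$ for $k'\le k$. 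Using the localization $\mathds1_{J_n}$ to absorb the sum over the extra index $n$ (recall $\sum_n\mathds1_{J_n}(0)\lesssim1$ by~\eqref{eq:boundrho}/\eqref{eq:firstboundsumchi}), the Cauchy--Schwarz inequality in probability then gives
\[
|\Delta_T^k|\lesssim\sum_{0\le j\le l\le k}(S_l^j)^{1/2}(S_{l'}^{j'})^{1/2}\lesssim\sum_{0\le j\le l\le k}C^{l+1}\le k^2C^{k+1}\le \tilde C^k
\]
for a suitably enlarged $\tilde C\sim1$, since $k^2C^{k+1}\le\tilde C^k$ for $\tilde C$ large enough. The argument for $E_T^{(p),j,k}$ is identical in structure: the only difference is that $\nabla\phi_T^F+\xi$ is replaced by $\nabla\phi_T^{E^{(p)}\cup F}+\xi$, which by~\eqref{eq:defdeltaxi1} expands as $\sum_{S\subset F}\nabla\delta_\xi^S\phi_T^{(p)}$, and these $p$-perturbed sums are controlled by the same improved energy estimates applied with $\phi_T$ replaced by $\phi_T^{(p)}$ (Proposition~\ref{prop:apimproved} holds verbatim for $\phi_T^{(p)}$, as $A^{(p)}$ is itself a stationary ergodic field satisfying the same structural hypotheses). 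Here the number of terms is $O(j^2)$ rather than $O(k^2)$, which is why the bound reads $C^j$.

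The main obstacle is the bookkeeping of the combinatorial rearrangements: one must carefully track which index sets remain free after the change of variables $F=G\sqcup E$ and the expansion of $\nabla\phi_T^F+\xi$, so that the resulting sums genuinely coincide with the $S_j^k$'s of Proposition~\ref{prop:apimproved} (and not with some larger, uncontrolled sum). In particular, one has to use the disjointness of $J_{E\|G}$ over $G$, the bound $\sum_{|S|\le k}\mathds1_{J_S}\lesssim1$ from~\eqref{eq:firstboundsumchi+}, and the combinatorial inequality~\eqref{eq:claimcombi+} to reduce $\big|\sum_{F\cap(G\cup S)=\varnothing}\nabla\delta_\xi^{F\cup G}\phi_T\big|$ to a sum of terms of the form $\big|\sum_{F\cap(G\cup L)=\varnothing}\nabla\delta_\xi^{F\cup G\cup L}\phi_T\big|$ — exactly as in the proof of Proposition~\ref{prop:apimproved}. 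Once the reduction is set up correctly, the estimate is immediate from Cauchy--Schwarz and Proposition~\ref{prop:apimproved}; the subtlety is entirely in making the reduction precise while keeping all constants of the form $C\sim1$.
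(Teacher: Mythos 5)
Your overall strategy is the paper's: expand $\nabla\phi_T^F+\xi$ in the difference operators, use $|C_{F\setminus G\|G}|\lesssim\mathds1_{J_{F\setminus G}}$ together with \eqref{eq:boundrho} (in the form \eqref{eq:firstboundsumchi+}) to absorb the localized sums, and reduce by Cauchy--Schwarz to the improved energy estimates of Proposition~\ref{prop:apimproved} (the paper merely adds a preliminary split of $C_{F\setminus G\|G}$ via \eqref{eq:excl3.3} into two groups $\Delta_{T,1}^k+\Delta_{T,2}^k$, which is cosmetic). However, there is a genuine gap exactly where you declare the estimate ``immediate''. Your displayed intermediate bound is not a correct inequality as written (the sum over $E=F\setminus G$ has silently disappeared, and the indicator you wrote does not absorb it), and, more substantively, the per-term Cauchy--Schwarz you sketch fails: pairing $|\nabla\delta_\xi^G\phi_T|$ with $\mathds1_{J_E}|\nabla\phi_T^{G\cup E}+\xi|$ term by term and then squaring each factor produces, on the second factor, quantities like $\E\big[\sum_{|G|=j}\sum_{E}\mathds1_{J_E}|\nabla\phi_T^{G\cup E}+\xi|^2\big]$, and already the zeroth-order contribution $\nabla\phi_T+\xi$ in the expansion of $\nabla\phi_T^{G\cup E}+\xi$ makes the sum over $G$ infinite, since nothing localizes $G$. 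The whole difficulty of the proof is to avoid this: the part of the index set that is not localized by an indicator must be kept \emph{inside} the absolute value before squaring, so that after Cauchy--Schwarz one lands exactly on the interpolated quantities $S_j^k$ of Proposition~\ref{prop:apimproved}. In the paper this is arranged by the change of variables $G\leadsto G\cup U$, $S\leadsto S\cup U$, $F\leadsto F\cup G\cup S\cup U$ (with $F,G,S,U$ disjoint), keeping the free sum over $G$ inside the modulus, then invoking \eqref{eq:claimcombi+}, Cauchy--Schwarz against the indicators, and Jensen, which yields the bound \eqref{eq:bounderrorimproveest}; your last paragraph names these tools but does not perform the reduction, and your only concrete attempt at it is the faulty display. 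This is not a wrong approach, but the asserted step is the entire content of the proof.

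Two smaller remarks. First, dropping the signs and bounding $|C_{F\setminus G\|G}|\lesssim\mathds1_{J_{F\setminus G\|G}}$ from the start is legitimate here (no cancellations are needed for \eqref{eq:apriori-unif-k}), and the pairwise disjointness of the $J_{F\setminus G\|G}$ over $G\subset F$ that you invoke is correct. Second, your treatment of $E_T^{(p),j,k}$ by applying the improved estimates with $\phi_T$ replaced by $\phi_T^{(p)}$ is in line with the paper's terse ``same arguments'': the needed estimates mix unperturbed differences $\nabla\delta_\xi^G\phi_T$ with differences relative to the background $E^{(p)}$, and the latter are controlled because $A^{(p)}$ is again a stationary ergodic admissible field satisfying \eqref{eq:boundrho}, so Proposition~\ref{prop:apimproved} applies with this background (this is the content of \eqref{eq:fam-ener}); after Young or Cauchy--Schwarz the mixed terms split into pure $\phi_T$- and pure $\phi_T^{(p)}$-quantities, as in Lemma~\ref{lem:errbound-0}. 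Your counting $k^2C^{k+1}\le\tilde C^k$ at the end is fine once the reduction produces only polynomially many $S$-terms, which it does.
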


\begin{proof}
The estimates of the errors $E_T^{(p),j,k}$'s are obtained using the same arguments as for the estimates of the approximate derivatives $\Delta_T^j$'s, and we only display the proof of the latter.
Since $\nabla\phi_T^F+\xi=\sum_{S\subset F}\nabla\delta_\xi^{S}\phi_T$  (cf. \eqref{eq:defdeltaxi1} with $G=H=\varnothing$) and $C_{F\setminus G\|G}=C_{F\setminus G}+\sum_{U\subset G,U\ne\varnothing}(-1)^{|U|}C_{U\cup (F\setminus G)}$ for any $G\subsetneq F$ (cf. \eqref{eq:excl3.3}), and
$C_{\varnothing\|G}\equiv 0$, we may rewrite formula~\eqref{eq:rewriteSkdisj1} as follows:
\begin{align*}
\Delta_T^k&=\underbrace{\sum_{|F|=k}\sum_{G\subsetneq F}\sum_{S\subset F}(-1)^{|F\setminus G|+1}\E[\nabla\delta_\xi^G\phi_T\cdot C_{F\setminus G}\nabla\delta_\xi^S\phi_T]}_{=:\Delta^k_{T,1}}\\
&\hspace{3cm}+\underbrace{\sum_{|F|=k}\sum_{G\subsetneq F}\sum_{S\subset F}\sum_{U\subset G\atop U\ne\varnothing}(-1)^{|F\setminus G|+|U|+1}\E[\nabla\delta_\xi^G\phi_T\cdot C_{U\cup (F\setminus G)}\nabla\delta_\xi^S\phi_T]}_{=:\Delta^k_{T,2}}.
\end{align*}
We treat each term separately.
By the change of variables $G\leadsto G\cup U$, $S\leadsto S\cup U$, and $F\leadsto F\cup G\cup S\cup U$ (with $F,G,S,U$ disjoint), we rewrite $\Delta_{T,1}^k$ as
\begin{align*}
\Delta^k_{T,1}=\sum_{l=0}^{k-1}\sum_{i=0}^{k-l}\sum_{j=0}^l\sum_{|F|=k-l-i}\sum_{|G|=l-j\atop G\cap F=\varnothing}\sum_{|S|=i\atop S\cap (F\cup G)=\varnothing}\sum_{|U|=j\atop U\cap (F\cup G\cup S)=\varnothing}(-1)^{|F|+|S|+1}\E[\nabla\delta_\xi^{G\cup U}\phi_T\cdot C_{F\cup S}\nabla\delta_\xi^{S\cup U}\phi_T],
\end{align*}
so that, by the triangle inequality,
\begin{align*}
|\Delta^k_{T,1}|\lesssim\sum_{l=0}^{k-1}\sum_{i=0}^{k-l}\sum_{j=0}^l\sum_{|F|=k-l-i}\sum_{|S|=i\atop S\cap F=\varnothing}\sum_{|U|=j\atop U\cap (F\cup S)=\varnothing}\E\bigg[ \mathds1_{J_{F\cup S}}|\nabla\delta_\xi^{S\cup U}\phi_T|\bigg|\sum_{|G|=l-j\atop G\cap(F\cup S\cup U)=\varnothing}\nabla\delta_\xi^{G\cup U}\phi_T\bigg|\bigg].
\end{align*}
Recall from \eqref{eq:claimcombi+} in the proof of Proposition~\ref{prop:apimproved} that
\begin{align*}
\bigg|\sum_{|G|=l-j\atop G\cap(F\cup S\cup U)=\varnothing}\nabla\delta_\xi^{G\cup U}\phi_T\bigg|&\le\sum_{u=0}^{|F|}\sum_{s=0}^{|S|}\sum_{|W|=u\atop W\subset F}\sum_{|H|=s\atop H\subset S}\bigg|\sum_{|G|=l-j-u-s\atop G\cap(W\cup H\cup U)=\varnothing}\nabla\delta_\xi^{G\cup W\cup H\cup U}\phi_T\bigg|.
\end{align*}
Hence, by the change of variables $F\leadsto F\setminus W$ and $S\leadsto S\setminus H$,
and the notation $\delta_{F,S,U,W,H}=1$ if $F,S,U,W,H$ are disjoint, and $\delta_{F,S,U,W,H}=0$ otherwise, this yields
\begin{align*}
|\Delta^k_{T,1}|&\lesssim\sum_{l=0}^{k-1}\sum_{i=0}^{k-l}\sum_{j=0}^l\sum_{u=0}^{k-l-i}\sum_{s=0}^{i}\sum_{|F|=k-l-i-u}\sum_{|S|=i-s}\sum_{|U|=j}\sum_{|W|=u}\sum_{|H|=s}\delta_{F,S,U,W,H}\\
&\hspace{2cm}\times\E\bigg[ \mathds1_{J_{F\cup W\cup S\cup H}}|\nabla\delta_\xi^{S\cup H\cup U}\phi_T|\bigg|\sum_{|G|=l-j-u-s\atop G\cap(W\cup H\cup U)=\varnothing}\nabla\delta_\xi^{G\cup W\cup H\cup U}\phi_T\bigg|\bigg].
\end{align*}
We rearrange the sums suitably, and use the notation $\mathds1_{J_\varnothing}=1$ (so that we have $\mathds1_{J_{L\cup K}}=\mathds1_{J_L}\mathds1_{J_K}$) to obtain
\begin{align*}
|\Delta^k_{T,1}|&\lesssim\sum_{l=0}^{k-1}\sum_{i=0}^{k-l}\sum_{j=0}^l\sum_{u=0}^{k-l-i}\sum_{s=0}^{i}\sum_{|U|=j}\sum_{|H|=s\atop H\cap U=\varnothing}\E\bigg[ \mathds1_{J_H}\bigg(\sum_{|F|=k-l-i-u}\mathds1_{J_{F}}\bigg)\bigg(\sum_{|S|=i-s\atop S\cap(H\cup U)=\varnothing}\mathds1_{J_S}|\nabla\delta_\xi^{S\cup H\cup U}\phi_T|\bigg)\\
&\hspace{6cm}\times\bigg(\sum_{|W|=u\atop W\cap(H\cup U)=\varnothing}\mathds1_{J_{W}}\bigg|\sum_{|G|=l-j-u-s\atop G\cap(W\cup H\cup U)=\varnothing}\nabla\delta_\xi^{G\cup W\cup H\cup U}\phi_T\bigg|\bigg)\bigg].
\end{align*}
Recalling that $\sum_{L\subset\N}\mathds1_{J_L}(0)\lesssim1$ by~\eqref{eq:firstboundsumchi+} (as a consequence of assumption~\eqref{eq:boundrho}), we deduce from a multiple use of the Cauchy-Schwarz inequality
\begin{align*}
|\Delta^k_{T,1}|&\lesssim\sum_{l=0}^{k-1}\sum_{i=0}^{k-l}\sum_{j=0}^l\sum_{u=0}^{k-l-i}\sum_{s=0}^{i}\sum_{|U|=j}\sum_{|H|=s\atop H\cap U=\varnothing}\E\bigg[\mathds1_{J_H}\bigg(\sum_{|S|=i-s\atop S\cap(H\cup U)=\varnothing}\mathds1_{J_S}|\nabla\delta_\xi^{S\cup H\cup U}\phi_T|^2\bigg)^{\frac12}\\
&\hspace{6cm}\times\bigg(\sum_{|W|=u\atop W\cap(H\cup U)=\varnothing}\mathds1_{J_W}\bigg|\sum_{|G|=l-j-u-s\atop G\cap(W\cup H\cup U)=\varnothing}\nabla\delta_\xi^{G\cup W\cup H\cup U}\phi_T\bigg|^2\bigg)^{\frac12}\bigg],
\end{align*}
and hence, by the Jensen inequality,
\begin{align*}
|\Delta^k_{T,1}|&\lesssim\sum_{l=0}^{k-1}\sum_{i=0}^{k-l}\sum_{j=0}^l\sum_{u=0}^{k-l-i}\sum_{s=0}^{i}\E\bigg[\sum_{|U|=j}\sum_{|H|=s\atop H\cap U=\varnothing}\mathds1_{J_H}\sum_{|S|=i-s\atop S\cap(H\cup U)=\varnothing}\mathds1_{J_S}|\nabla\delta_\xi^{S\cup H\cup U}\phi_T|^2\bigg]\\
&\quad+\sum_{l=0}^{k-1}\sum_{i=0}^{k-l}\sum_{j=0}^l\sum_{u=0}^{k-l-i}\sum_{s=0}^{i}\E\bigg[\sum_{|U|=j}\sum_{|H|=s\atop H\cap U=\varnothing}\mathds1_{J_H}\sum_{|W|=u\atop W\cap(H\cup U)=\varnothing}\mathds1_{J_W}\bigg|\sum_{|G|=l-j-u-s\atop G\cap(W\cup H\cup U)=\varnothing}\nabla\delta_\xi^{G\cup W\cup H\cup U}\phi_T\bigg|^2\bigg].
\end{align*}
By the changes of variables $S\cup H\cup U\leadsto U$ in the first term and $W\cup H\cup U\leadsto U$ in the second term, and using that 
$\sum_{|H|\le k}\sum_{|W|\le k}\mathds1_{J_H}\mathds1_{J_W}\lesssim 1$, this finally yields
\begin{align}\label{eq:bounderrorimproveest}
|\Delta^k_{T,1}|&\lesssim \sum_{j=0}^{k}\E\bigg[\sum_{|U|=j}|\nabla\delta_\xi^{U}\phi_T|^2\bigg]+\sum_{j=0}^{k-1}\sum_{i=0}^{j}\E\bigg[\sum_{|U|=j-i}\bigg|\sum_{|G|=i\atop G\cap U=\varnothing}\nabla\delta_\xi^{G\cup U}\phi_T\bigg|^2\bigg].
\end{align}
The improved energy estimates of Proposition~\ref{prop:apimproved} then allow us to conclude that $|\Delta_{T,1}^k|\lesssim C^{k}$ for some $C\sim1$. As we can easily argue in a similar way for $\Delta_{T,2}^k$, the conclusion follows.
\end{proof}

The combination of Lemma~\ref{lem:decompdiffdisj} and Proposition~\ref{prop:apriori} immediately yields the following result:

\begin{cor}\label{cor:reslimT}
Assume that $\E[\rho(Q)^s]<\infty$ for all $s\ge1$. Then, there exists a constant $C\sim1$ (independent of $T$ and of the moments of $\rho$) such that, for any $k\ge0$ and any $p\in[0,1]$, we have
\begin{align}\label{eq:toprdecnondisjda}
\bigg|\Delta_T^{(p)}-\sum_{j=1}^kp^j\Delta_T^j\bigg|\le (Cp)^{k+1}.
\end{align}
\qed
\end{cor}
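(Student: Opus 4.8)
The plan is to read the statement off directly from the combination of Lemma~\ref{lem:decompdiffdisj} and Proposition~\ref{prop:apriori}; the only genuine work is a short geometric‑sum manipulation repackaging a finite error sum into the single power $(Cp)^{k+1}$ with a constant that stays uniform in $k$.

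First, for $k\ge1$ I would invoke the decomposition~\eqref{eq:toprdecnondisj} of Lemma~\ref{lem:decompdiffdisj}, which gives
\[
\Delta_T^{(p)}-\sum_{j=1}^kp^j\Delta_T^j=\sum_{j=k+1}^{k+\Gamma}p^jE_T^{(p),j,k},
\]
a finite sum of errors that are well defined and whose defining series converge absolutely for fixed $T$. Then I would apply~\eqref{eq:apriori-unif-k-err} of Proposition~\ref{prop:apriori}, namely $|E_T^{(p),j,k}|\le C^j$ with $C\sim1$ independent of $T$, of $p$, and of the moments of $\rho$, to get
\[
\bigg|\Delta_T^{(p)}-\sum_{j=1}^kp^j\Delta_T^j\bigg|\le\sum_{j=k+1}^{k+\Gamma}(Cp)^j.
\]
Now comes the repackaging: without loss of generality $C\ge1$, and since $p\in[0,1]$ every factor $(Cp)^{j-k-1}$ with $k+1\le j\le k+\Gamma$ is bounded by $C^{\Gamma-1}$, so that
\[
\sum_{j=k+1}^{k+\Gamma}(Cp)^j\le\Gamma C^{\Gamma-1}(Cp)^{k+1}\le\big(\Gamma C^{\Gamma}\big)^{k+1}p^{k+1}=(\widetilde Cp)^{k+1},
\]
where in the middle step I used $\Gamma C^{\Gamma-1}\ge1$, hence $\Gamma C^{\Gamma-1}\le(\Gamma C^{\Gamma-1})^{k+1}$, and where $\widetilde C:=\Gamma C^{\Gamma}\sim1$ does not depend on $k$. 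This is precisely~\eqref{eq:toprdecnondisjda} after renaming $\widetilde C$ back to $C$.

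It remains to dispose of the degenerate case $k=0$, where the sum $\sum_{j=1}^0$ is empty and the claim reads $|\Delta_T^{(p)}|\le Cp$. I would obtain this either directly from~\eqref{eq:deverr-} together with the energy estimates underlying Proposition~\ref{prop:apriori}, or, more cheaply, by reduction to the case $k=1$ already treated: writing $\Delta_T^{(p)}=p\,\Delta_T^1+(\Delta_T^{(p)}-p\,\Delta_T^1)$, bounding the first term by $p\,|\Delta_T^1|\le Cp$ via~\eqref{eq:apriori-unif-k}, and the second by $(C'p)^2\le C'^2p$ for $p\in[0,1]$, so that $|\Delta_T^{(p)}|\le(C+C'^2)p$; enlarging the constant to the maximum of all those appearing then yields a single $C\sim1$ valid for every $k\ge0$.

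I do not expect any real obstacle: all of the analytic content—the cancellations in Lemma~\ref{lem:decompdiffdisj} and the uniform improved energy estimates of Proposition~\ref{prop:apimproved} feeding Proposition~\ref{prop:apriori}—is already in place, and the only point that calls for a moment's care is precisely the geometric‑sum step above, which is needed to keep the constant in the final bound $(Cp)^{k+1}$ independent of $k$ (and of $T$, of $p$, and of the moments of $\rho$).
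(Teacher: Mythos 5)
Your proposal is correct and is essentially the paper's own argument: the corollary is obtained there precisely by combining Lemma~\ref{lem:decompdiffdisj} with the uniform bounds $|E_T^{(p),j,k}|\le C^j$ of Proposition~\ref{prop:apriori}, and your geometric-sum repackaging (using $p\le1$, $\Gamma\lesssim1$, and enlarging $C$) together with the separate treatment of $k=0$ just spells out the details the paper leaves implicit in the phrase ``immediately yields''.
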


The following lemma provides useful alternative formulas for the approximate derivatives $\Delta_T^j$'s (which coincide with the argument of the limit in~\eqref{eq:formderdisj0} for $p_0=0$), showing that they coincide with the arguments of the limits in~\eqref{eq:formderdisj1} and~\eqref{eq:formderdisj2} for $p_0=0$.

\begin{lem}\label{lem:alter}
Assume that $\E[\rho(Q)^s]<\infty$ for all $s\ge1$. For all $T>0$, the approximate derivatives $\Delta_T^j$'s, $j\ge1$, given by~\eqref{eq:rewriteSkdisj1}, satisfy the following two equivalent formulas:
\begin{align}
\Delta_T^j&=\sum_{|F|=j}\sum_{G\subset F}(-1)^{|G|+1}\E\left[(\nabla\phi_T+\xi)\cdot C_G\nabla\delta_\xi^{F\setminus G}\phi_T^{G}\right]\label{eq:rewriteSkdisj2}\\
&=\sum_{|F|=j}\sum_{G\subset F}(-1)^{|F\setminus G|}\E[\xi\cdot A^{F\setminus G}(\nabla\phi_T^{G}+\xi)]\label{eq:rewriteSkdisj3},
\end{align}
where both sums $\sum_{|F|=j}$ are absolutely convergent.\qed
\end{lem}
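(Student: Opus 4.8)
The plan is to show that the three formulas for $\Delta_T^j$ --- namely \eqref{eq:rewriteSkdisj1}, \eqref{eq:rewriteSkdisj2} and \eqref{eq:rewriteSkdisj3} --- all arise from the single identity \eqref{eq:err1wd} of Lemma~\ref{lem:firstdecomp} by expanding $C^{(p)}$ through the inclusion-exclusion formula \eqref{eq:excl2} and then pushing the difference operators $\delta^F$ around using the algebraic identities \eqref{eq:defdeltaxi1}, \eqref{eq:excl3.1}--\eqref{eq:excl3.3}. Concretely, I would prove the chain $\eqref{eq:rewriteSkdisj3}\Rightarrow\eqref{eq:rewriteSkdisj2}\Rightarrow\eqref{eq:rewriteSkdisj1}$, in each step only reorganizing finite sums; absolute convergence of every rearrangement is guaranteed for fixed $T$ by Lemma~\ref{lem:convfinT} (using \eqref{eq:convfinTS}, \eqref{eq:convfinTS+} and \eqref{eq:convfinTS2}), so Fubini-type manipulations are justified throughout.

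First I would establish \eqref{eq:rewriteSkdisj3}. Starting from the definition $\Delta_T^{(p)}=\E[\xi\cdot A^{(p)}(\nabla\phi_T^{(p)}+\xi)]-\E[\xi\cdot A(\nabla\phi_T+\xi)]$ and inserting the cluster expansion \eqref{eq:clusterphiT}-type telescoping --- i.e. writing $\xi\cdot A^{E^{(p)}}(\nabla\phi_T^{E^{(p)}}+\xi)$ via the difference operator acting on $H\mapsto \xi\cdot A^{H}(\nabla\phi_T^{H}+\xi)$ and collecting the coefficient of $p^j$ exactly as in the derivation of \eqref{eq:deverr-} --- one reads off that the $j$-th approximate derivative equals $j!$ times the coefficient of $p^j$, which is precisely $\sum_{|F|=j}\sum_{G\subset F}(-1)^{|F\setminus G|}\E[\xi\cdot A^{G}(\nabla\phi_T^{G}+\xi)]$ after the index substitution $A^{F\setminus G}\leftrightarrow A^{G}$ (here one must be careful that the term $A^{F\setminus G}$ in \eqref{eq:rewriteSkdisj3} should be read with the convention on $E^{(0)}=\varnothing$, so that $A^{F\setminus G}$ plays the role of $A^{G\cup(E^{(0)}\setminus F)}$ in \eqref{eq:formderdisj2}). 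Then, to pass from \eqref{eq:rewriteSkdisj3} to \eqref{eq:rewriteSkdisj2}, I would use the weak formulation of the corrector equation \eqref{eq:modif-corr} for $\phi_T^{G}$ tested against $\phi_T$, together with $A^{G}=A+C^{G}$ and the inclusion-exclusion identity \eqref{eq:excl3.1} $C^{G}=\sum_{S\subset G}(-1)^{|S|+1}C_S$, to rewrite $\E[\xi\cdot A^{G}(\nabla\phi_T^{G}+\xi)]$ as $\E[(\nabla\phi_T+\xi)\cdot A^{G}(\nabla\phi_T^G+\xi)]-\tfrac1T\E[\phi_T\phi_T^G]$ minus a symmetric term, and observe the massive and elliptic contributions telescope against the alternating $G$-sum exactly as in Lemma~\ref{lem:firstdecomp}; what survives is $\sum_{|F|=j}\sum_{G\subset F}(-1)^{|F\setminus G|}\E[(\nabla\phi_T+\xi)\cdot C^{G}(\nabla\phi_T^{G\cup\cdots})]$, and re-expanding $C^{G}$ via \eqref{eq:excl3.1} and relabelling gives \eqref{eq:rewriteSkdisj2}.

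Finally, for $\eqref{eq:rewriteSkdisj2}\Rightarrow\eqref{eq:rewriteSkdisj1}$, I would apply the decomposition $\nabla\phi_T+\xi=\sum_{S\subset G}\nabla\delta_\xi^{S}\phi_T\big|_{\cdots}$ --- more precisely identity \eqref{eq:defdeltaxi1} --- to expand the $(\nabla\phi_T+\xi)$ factor, use self-adjointness of $C_G$ (and $C_{F\setminus G\|G}$) together with \eqref{eq:excl3.3} to convert $C_G$ into the "$\|$'' notation $C_{F\setminus G\|G}$, and finally swap the roles of $G$ and $F\setminus G$ in the summation (the change of variables $G\leadsto F\setminus G$), which transforms $\nabla\delta_\xi^{F\setminus G}\phi_T^G$ into $\nabla\delta_\xi^{G}\phi_T$ acting on the complementary index set --- exactly the combinatorial bookkeeping already carried out in the proof of Proposition~\ref{prop:apriori} right after \eqref{eq:rewriteSkdisj1}. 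The main obstacle, as usual in this paper, is purely combinatorial: keeping track of the alternating signs and of which subsets are disjoint from which when several difference operators and several inclusion-exclusion expansions are composed; the identities \eqref{eq:defdeltaxi1} and \eqref{eq:excl3.1}--\eqref{eq:excl3.3}, applied in the right order, are precisely what makes the cancellations transparent, and I would structure the proof in the three labelled steps above to isolate each relabelling.
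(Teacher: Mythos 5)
Your route runs the chain in the opposite direction to the paper's (which starts from the definition \eqref{eq:rewriteSkdisj1}, derives \eqref{eq:rewriteSkdisj2} by a double testing argument, and only then \eqref{eq:rewriteSkdisj3}); that in itself would be fine, and your middle step (\eqref{eq:rewriteSkdisj3} to \eqref{eq:rewriteSkdisj2}) is essentially the paper's Step~2 run backwards: a Lemma~\ref{lem:firstdecomp}-type identity for each fixed $G$ plus resummation via \eqref{eq:defdeltaxi} and \eqref{eq:excl3.1}, which is workable. But your anchor step is not sound. You propose to obtain \eqref{eq:rewriteSkdisj3} by a cluster-expansion telescoping of $\xi\cdot A^{E^{(p)}}(\nabla\phi_T^{E^{(p)}}+\xi)$ and "reading off" the coefficient of $p^j$. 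The quantity $\Delta_T^j$ is \emph{defined} by \eqref{eq:rewriteSkdisj1}, not as a Taylor coefficient; identifying it with $j!$ times a coefficient of $p^j$ requires an expansion with controlled remainder (that is exactly Lemma~\ref{lem:decompdiffdisj} together with Proposition~\ref{prop:apriori}), and producing a second rigorous expansion whose coefficients are the cluster terms would mean redoing that induction and its error analysis. A termwise expansion in $p$ is precisely what the paper warns, after \eqref{eq:clusterphiT}, is only formal: in \eqref{eq:deverr-} independence extracts powers of $p$ from $C^{(p)}$ only, while the factor $\nabla\phi_T^{E^{(p)}\cup F}$ still depends on $p$. (If instead you mean your later steps to be identities between the three explicit sums, then step~1 is superfluous; but then the only link to \eqref{eq:rewriteSkdisj1} is your final step, which is where the real problem lies. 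Also, what a correct derivation yields, consistently with \eqref{eq:formderdisj2} at $p_0=0$, is the cluster term $\xi\cdot A^{G}(\nabla\phi_T^{G}+\xi)$; the printed $A^{F\setminus G}$ is a misprint, and your "index substitution $A^{F\setminus G}\leftrightarrow A^{G}$" is not a licit move, nor is it needed.)

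The serious gap is your last step: the passage between \eqref{eq:rewriteSkdisj2} and \eqref{eq:rewriteSkdisj1} is \emph{not} purely combinatorial. Already at $j=2$, after the common contribution $-\E[(\nabla\phi_T+\xi)\cdot C_{\{n,m\}}(\nabla\phi_T^{\{n,m\}}+\xi)]$ cancels on both sides, the claimed equality reduces to $\sum_{n\ne m}\E[\nabla\delta_\xi^{\{n\}}\phi_T\cdot C_{\{m\}\|\{n\}}(\nabla\phi_T^{\{n,m\}}+\xi)]=\sum_{n\ne m}\E[(\nabla\phi_T+\xi)\cdot C_{\{n\}}\nabla\delta_\xi^{\{m\}}\phi_T^{\{n\}}]$, and no relabelling or application of \eqref{eq:defdeltaxi1} and \eqref{eq:excl3.3} turns one side into the other: the two sides pair different gradients against different indicator sets, and their equality is a duality statement that genuinely uses the corrector equations and the symmetry of the coefficients. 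This is exactly what the paper's Step~1 does: it tests \eqref{eq:satif2} for $\delta_\xi^{F\setminus G}\phi_T^{G}$ against $\delta_\xi^{G}\phi_T$, tests \eqref{eq:satif1} for $\delta_\xi^{G}\phi_T$ against $\delta_\xi^{F\setminus G}\phi_T^{G}$, and lets the two resulting families of sums cancel after a change of variables. Your pointer to the bookkeeping in the proof of Proposition~\ref{prop:apriori} is off target, since that argument produces bounds, not identities. Finally, your blanket appeal to Lemma~\ref{lem:convfinT} is too quick for \eqref{eq:rewriteSkdisj3}: the absolute convergence of $\sum_{|F|=j}$ there is itself part of the statement (each summand is $O(1)$, and only the signed inner sum over $G$ makes the outer sum summable); the paper obtains it only as a byproduct of the decomposition into \eqref{eq:sk2rewrpl} and \eqref{eq:stk1equiv}, so if you start from \eqref{eq:rewriteSkdisj3} you must justify this before any Fubini-type rearrangement.
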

Before we turn to the proof of this lemma, let us comment on the equivalent formulas~\eqref{eq:rewriteSkdisj1}, \eqref{eq:rewriteSkdisj2} and~\eqref{eq:rewriteSkdisj3}. Formula~\eqref{eq:rewriteSkdisj1} is the natural formula that we obtain by expanding the difference quotient (see proof of~\eqref{eq:deverr-} and of Lemma~\ref{lem:decompdiffdisj}), formula~\eqref{eq:rewriteSkdisj2} is the easiest to use in practice (see e.g. Corollaries~\ref{cor:cm} and~\ref{cor:cm2}), while formula~\eqref{eq:rewriteSkdisj3} is the cluster-expansion formula used by physicists.

\begin{proof}
We split the proof into two steps. We first prove~\eqref{eq:rewriteSkdisj2}, from which~\eqref{eq:rewriteSkdisj3} is an easy consequence.

\medskip

\step{1} Proof of \eqref{eq:rewriteSkdisj2}.\\
All absolute convergence issues that we need here (for fixed $T$) simply follow as before from Lemma~\ref{lem:convfinT} or similar statements (based on Lemma~\ref{lem:finiteT}).
For the clarity of the exposition, we discard this issue in the proof. 
Let $j\ge1$ be fixed. Separating the cases $G=\varnothing$ and $G\ne\varnothing$, and noting that $C_{F\setminus G\|G}$ vanishes whenever $G=F$, the very definition~\eqref{eq:rewriteSkdisj1} of $\Delta_T^j$ reads
\begin{align*}
\Delta_T^j&=\sum_{|F|=j}\sum_{ G\subsetneq F\atop G\ne\varnothing}(-1)^{|F\setminus G|+1}\E\left[\nabla\delta_\xi^G\phi_T\cdot C_{F\setminus G\| G}(\nabla\phi_T^{F}+\xi)\right]\\
&\qquad+(-1)^{j+1}\sum_{|F|=j}\E\left[(\nabla\phi_T+\xi)\cdot C_{F}(\nabla\phi_T^{F}+\xi)\right].
\end{align*}
For any $|F|=j$, $G\subsetneq F$, $G\ne\varnothing$, 
by \eqref{eq:satif2} in Lemma~\ref{lem:eqsatif}, $\delta_\xi^{F\setminus G} \phi_T^G$ satisfies 
\[\frac1T\delta_\xi^{F\setminus G}\phi_T^G-\nabla\cdot A^G\nabla\delta_\xi^{F\setminus G}\phi_T^G=\sum_{S\subset F\setminus G}(-1)^{|S|+1}\nabla\cdot C_{S\|G}\nabla\delta_\xi^{F\setminus (G\cup S)}\phi_T^{G\cup S}.\]
Testing this equation with $\delta_\xi^G\phi_T$  (as in the proof of~\eqref{eq:REF-ERG} in Lemma~\ref{lem:aprioriproba}) yields
\begin{align*}
\Delta_T^j&=-\frac1T\sum_{|F|=j}\sum_{G\subsetneq F\atop G\ne\varnothing}\E\left[\delta_\xi^G\phi_T\delta_\xi^{F\setminus G}\phi_T^G\right]-\sum_{|F|=j}\sum_{G\subsetneq F\atop G\ne\varnothing}\E\left[\nabla\delta_\xi^G\phi_T\cdot A^G\nabla\delta_\xi^{F\setminus G}\phi_T^G\right]\\
&\qquad+\sum_{|F|=j}\sum_{G\subsetneq F\atop G\ne\varnothing}\sum_{S\subsetneq F\setminus G}(-1)^{|S|}\E\left[\nabla\delta_\xi^G\phi_T\cdot C_{S\|G}\nabla\delta_\xi^{F\setminus (G\cup S)}\phi_T^{G\cup S}\right]\\
&\qquad+(-1)^{j+1}\sum_{|F|=j}\E\left[(\nabla\phi_T+\xi)\cdot C_{F}(\nabla\phi_T^{F}+\xi)\right].
\end{align*}
Now, for $G\ne\varnothing$, by \eqref{eq:satif1} in Lemma~\ref{lem:eqsatif} (with $H=\varnothing$), 
$\delta_\xi^G\phi_T$ solves
\[\frac1T\delta_\xi^G\phi_T-\nabla\cdot A^G\nabla\delta_\xi^G\phi_T=\sum_{S\subset G}(-1)^{|S|+1}\nabla\cdot C_{S\|G\setminus S}\nabla\delta_\xi^{G\setminus S}\phi_T.\]
Testing this equation with $\delta_\xi^{F\setminus G}\phi_T^G$ yields
\begin{align*}
\Delta_T^j&=-\sum_{|F|=j}\sum_{G\subsetneq F\atop G\ne\varnothing}\sum_{S\subset G}(-1)^{|S|}\E\left[\nabla\delta_\xi^{G\setminus S}\phi_T\cdot C_{S\|G\setminus S}\nabla\delta_\xi^{F\setminus G}\phi_T^G\right]\\
&\qquad+\sum_{|F|=j}\sum_{G\subsetneq F\atop G\ne\varnothing}\sum_{S\subsetneq F\setminus G}(-1)^{|S|}\E\left[\nabla\delta_\xi^G\phi_T\cdot C_{S\|G}\nabla\delta_\xi^{F\setminus (G\cup S)}\phi_T^{G\cup S}\right]\\
&\qquad+(-1)^{j+1}\sum_{|F|=j}\E\left[(\nabla\phi_T+\xi)\cdot C_{F}(\nabla\phi_T^{F}+\xi)\right],
\end{align*}
and therefore
\begin{align*}
\Delta_T^j&=-\sum_{|F|=j}\sum_{G\subsetneq F}\sum_{S\subsetneq G}(-1)^{|S|}\E\left[\nabla\delta_\xi^{G\setminus S}\phi_T\cdot C_{S\|G\setminus S}\nabla\delta_\xi^{F\setminus G}\phi_T^G\right]\\
&\qquad+\sum_{|F|=j}\sum_{G\subsetneq F\atop{G\ne\varnothing}}\sum_{S\subsetneq F\setminus G}(-1)^{|S|}\E\left[\nabla\delta_\xi^G\phi_T\cdot C_{S\|G}\nabla\delta_\xi^{F\setminus (G\cup S)}\phi_T^{G\cup S}\right]\\
&\qquad+\sum_{|F|=j}\sum_{G\subset F}(-1)^{|G|+1}\E\left[(\nabla\phi_T+\xi)\cdot C_{G}\nabla\delta_\xi^{F\setminus G}\phi_T^G\right].
\end{align*}
With the change of variables $G\leadsto G\setminus S$
in the first term, 
we observe that the first two groups of sums cancel, so that we are left with
\begin{align*}
\Delta_T^j&=\sum_{|F|=j}\sum_{G\subset F}(-1)^{|G|+1}\E\left[(\nabla\phi_T+\xi)\cdot C_{G}\nabla\delta_\xi^{F\setminus G}\phi_T^G\right],
\end{align*}
that is,~\eqref{eq:rewriteSkdisj2}.

\medskip

\step{2} Proof of \eqref{eq:rewriteSkdisj3}.\\
Absolute convergence issues for this part of the proof
(which do not straightforwardly follow from Lemmas~\ref{lem:convfinT} and ~\ref{lem:finiteT}) will be addressed at the end of this step.
Let $j\ge1$ be fixed. Formula~\eqref{eq:rewriteSkdisj2} gives
\begin{align}\label{eq:defverSTk12}
\Delta_T^j&=\underbrace{\sum_{|F|=j}\sum_{G\subset F}(-1)^{|G|+1}\E\left[\xi\cdot C_G\nabla\delta_\xi^{F\setminus G}\phi_T^G\right]}_{=:S_T^{j,1}}+\underbrace{\sum_{|F|=j}\sum_{G\subset F}(-1)^{|G|+1}\E\left[\nabla\phi_T\cdot C_G\nabla\delta_\xi^{F\setminus G}\phi_T^G\right]}_{=:S_T^{j,2}}.
\end{align}
By  \eqref{eq:satif2} in Lemma~\ref{lem:eqsatif}  (with $H=\varnothing$), $\delta_\xi^F\phi_T$ solves
\[\frac1T\delta_\xi^F\phi_T-\nabla\cdot A\nabla\delta_\xi^F\phi_T=\sum_{G\subset F}(-1)^{|G|+1}\nabla\cdot C_G\nabla\delta_\xi^{F\setminus G}\phi_T^G,\]
whereas $\phi_T$ solves
\[\frac1T\phi_T-\nabla\cdot A(\nabla\phi_T+\xi)=0.\]
On the one hand, testing these equations with $\phi_T$ and $\delta_\xi^F\phi_T$ respectively (as in the proof of~\eqref{eq:REF-ERG} in Lemma~\ref{lem:aprioriproba}), we obtain
\begin{align}
S_T^{j,2}&=-\frac1T\sum_{|F|=j}\E\left[\phi_T\delta_\xi^F\phi_T\right]-\sum_{|F|=j}\E\left[\nabla\phi_T\cdot A\nabla\delta_\xi^F\phi_T\right]\nonumber\\
&=\sum_{|F|=j}\E\left[(\nabla\phi_T+\xi)\cdot A\nabla\delta_\xi^F\phi_T\right]-\sum_{|F|=j}\E\left[\nabla\phi_T\cdot A\nabla\delta_\xi^F\phi_T\right]\nonumber\\
&=\sum_{|F|=j}\E\left[\xi\cdot A\nabla\delta_\xi^F\phi_T\right],\label{eq:sk2rewrpl+}
\end{align}
and therefore
\begin{align}\label{eq:sk2rewrpl}
S_T^{j,2}=\sum_{|F|=j}\sum_{G\subset F}(-1)^{|F\setminus G|}\E\left[\xi\cdot A(\nabla\phi_T^G+\xi)\right].
\end{align}
On the other hand, $S_T^{j,1}$ can be rewritten as follows:
\begin{align*}
S_T^{j,1}&=\sum_{|F|=j}\sum_{G\subset F}(-1)^{|G|+1}\sum_{S\subset F\setminus G}(-1)^{F\setminus (S\cup G)}\E\left[\xi\cdot C_G(\nabla\phi_T^{S\cup G}+\xi)\right],
\end{align*}
which yields by the change of variables $S\cup G\leadsto U$
\begin{align}
S_T^{j,1}&=\sum_{|F|=j}\sum_{U\subset F}(-1)^{|F\setminus U|}\E\left[\xi\cdot \left(\sum_{G\subset U}(-1)^{|G|+1}C_G\right)(\nabla\phi_T^{U}+\xi)\right]\nonumber\\
&=\sum_{|F|=j}\sum_{U\subset F}(-1)^{|F\setminus U|}\E\left[\xi\cdot C^U(\nabla\phi_T^{U}+\xi)\right].\label{eq:stk1equiv}
\end{align}
The desired result  follows from the combination of \eqref{eq:defverSTk12}, \eqref{eq:sk2rewrpl}, and \eqref{eq:stk1equiv}.
Note that the sum defining $S_T^{j,1}$ in~\eqref{eq:defverSTk12} is absolutely convergent by virtue of~\eqref{eq:convfinTS+} in Lemma~\ref{lem:convfinT}, and hence the sum $\sum_{|F|=j}$ in~\eqref{eq:stk1equiv} is also absolutely convergent, since its terms have just been rewritten but are still the same. Likewise, the sum in the right-hand side of~\eqref{eq:sk2rewrpl+} is absolutely convergent by Lemma~\ref{lem:finiteT} (thus justifying the testing argument), so that the sum in~\eqref{eq:sk2rewrpl} must also converge absolutely. This finally proves that the sum $\sum_{|F|=j}$ in~\eqref{eq:rewriteSkdisj3} is absolutely convergent too (which would not be clear a priori without performing this decomposition).
\end{proof}

\subsection{Proof of Theorem~\ref{th:analytic} and Corollary~\ref{cor:analytic}}\label{chap:proofth1}

Let $\xi\in\R^d$, $|\xi|=1$ be fixed. It suffices to prove Theorem~\ref{th:analytic} and Corollary~\ref{cor:analytic} for that fixed choice of $\xi$. What needs to be done is to pass to the limit $T\uparrow\infty$ in Corollary~\ref{cor:reslimT}, and get rid of the additional assumption that $\E[\rho(Q)^s]<\infty$ for all $s\ge1$. For that second purpose, given a point process $\rho$, we introduce approximations for which all moments exist: more precisely, we shall construct hardcore approximations $\rho_\theta$ of $\rho$, apply Corollary~\ref{cor:reslimT} for these approximations, and then pass to the limit in both the parameters $T$ and $\theta$. We split the proof into five steps.

\medskip

\step1 Hardcore approximations of $\rho$.\\
Let $\theta>0$ be fixed. In this first step, we construct hardcore approximations $\rho_\theta$ of the stationary point process $\rho$ in the following sense: for any $\theta>0$, $\rho_\theta$ is an ergodic stationary point process on $\R^d$ such that $\rho_\theta\subset\rho$ and $\rho_\theta(Q)\le\theta$ a.s., and moreover $\rho_\theta\uparrow\rho$ locally almost surely as $\theta\uparrow\infty$. For any $\theta>0$, we choose a measurable enumeration $\rho_\theta=(q_n^\theta)_n$. We then define $A_\theta^F$ as the coefficients obtained when replacing $\rho$ by $\rho_\theta$ in $A^F$. Similarly, we define $\phi^{F}_{T,\theta}$ the approximate corrector and $A^{(p)}_{T,\theta}$ the approximate homogenized coefficients associated with $A_\theta^F,A_\theta^{(p)}$ instead of $A^F,A^{(p)}$. We then also prove the following convergence properties, which will be crucial in the next step: for fixed $p\in[0,1]$ and $T>0$, we have
\begin{align}\label{eq:convphitheta}
\E[|\nabla(\phi_{T,\theta}^{(p)}-\phi_{T}^{(p)})|^2]\xrightarrow{\theta\uparrow\infty}0,
\end{align}
and therefore
\begin{align}\label{eq:convatheta}
|A^{(p)}_{T,\theta}-A^{(p)}_T|\xrightarrow{\theta\uparrow\infty}0.
\end{align}

We first give a possible construction of such an approximating sequence $(\rho_\theta)_\theta$. Consider the measurable enumeration $\rho=(q_n)_n$, choose independently a sequence $(U_n)_n$ of i.i.d. random variables that are uniformly distributed on $(0,1)$, and consider the decorated process $(q_n,U_n)_n$. We then build an oriented graph on the points $(q_n,U_n)_n$ in $\R^d\times[0,1]$ as follows: we put an oriented edge from $(q,u)$ to $(q',u')$ whenever $(q+\frac1\theta Q)\cap (q'+\frac1\theta Q)\ne\varnothing$ and $u<u'$ (or $u=u'$ and $q$ precedes $q'$ in the lexicographic order, say). We say that $(q',u')$ is an offspring (resp. a descendant) of $(q,u)$ if $(q,u)$ is a direct ancestor (resp. an ancestor) of $(q',u')$, i.e. if there is an edge (resp. a directed path) from $(q,u)$ to $(q',u')$ in the oriented graph constructed above. We now construct $\rho_\theta$ as follows. Let $F_1$ be the set of all roots in the oriented graph (i.e. the points of $\Pc_0$ without ancestor), let $G_1$ be the set of points of $\Pc_0$ that are offsprings of points of $F_1$, and let $H_1=F_1\cup G_1$. Now consider the oriented graph induced on $(q_n,U_n)_n\setminus H_1$, and define $F_2,G_2,H_2$ in the same way, and so on. By construction, the sets $F_i$ and $G_i$ are all disjoint and constitute a partition of the collection $(q_n,U_n)_n$. Finally define $\rho_\theta:=\pi_1(\bigcup_iF_i)$, where $\pi_1$ is the projection on the first factor, $\pi_1(q,u)=q$. We easily check that $\rho_\theta$ defines a stationary point process on $\R^d$ and satisfies the required properties. Ergodicity of $\rho_\theta$ easily follows from that of $\rho$ exactly in the same way as for the random parking measure in~\cite[Step~4 of the proof of Proposition~2.1]{Gloria-Penrose-13}.

It only remains to prove the convergence property~\eqref{eq:convphitheta}. For that purpose, we write the equation satisfied by the difference $\phi_{T,\theta}^{(p)}-\phi_{T}^{(p)}$:
\[\frac1T(\phi_{T,\theta}^{(p)}-\phi_{T}^{(p)})-\nabla\cdot A_\theta^{(p)}\nabla(\phi_{T,\theta}^{(p)}-\phi_{T}^{(p)})=\nabla\cdot (A^{(p)}_\theta-A^{(p)})(\nabla\phi_T^{(p)}+\xi).\]
Testing this equation in probability with $\phi_{T,\theta}^{(p)}-\phi_{T}^{(p)}$ itself yields
\begin{align*}
\E[|\nabla(\phi_{T,\theta}^{(p)}-\phi_{T}^{(p)})|^2]&\lesssim \E[|A^{(p)}_\theta-A^{(p)}|^2\,(|\nabla\phi_T^{(p)}|^2+1)].
\end{align*}
By assumption, $A(0)$ and $A'(0)$ only depend on $\rho$ via the restriction $\rho|_{B_r}$ for some given $r>0$, so that the same property holds by definition for $A^{(p)}(0)$.
Hence, for some $L>0$,
\begin{align*}
\E[|\nabla(\phi_{T,\theta}^{(p)}-\phi_{T}^{(p)})|^2]&\lesssim \E[\mathds1_{\rho_\theta|_{B_L}\ne\rho|_{B_L}}\,(|\nabla\phi_T^{(p)}|^2+1)].
\end{align*}
Now the desired result simply follows from dominated convergence and the basic energy estimate $\E[|\nabla\phi_T^{(p)}|^2+1]\lesssim1$, recalling that, by definition, we have, almost surely as $\theta\uparrow\infty$,
\[\mathds1_{\rho_\theta|_{B_L}\ne\rho|_{B_L}}\to0.\]

\medskip

\step{2} Reduction by regularization.\\
In this step, we prove Theorem~\ref{th:analytic} and Corollary~\ref{cor:analytic} provided we have that, for fixed  $T$ and under the additional assumption $\E[\rho(Q)^s]<\infty$ for all $s\ge1$,  the map $p\mapsto \xi\cdot A^{(p)}_{T}\xi$ satisfies, for any $p_0\in[0,1]$ and any $k\ge1$, for all $-p_0\le p\le1-p_0$, $|p|\le 1/C_{p_0}$,
\begin{align}\label{eq:devtaylorTfix}
\bigg|\xi\cdot A^{(p_0+p)}_{T}\xi-\xi\cdot A^{(p_0)}_{T}\xi-\sum_{j=1}^kp^j\Delta_{T}^{(p_0),j}\bigg|\le (pC_{p_0})^{k+1},
\end{align}
for some constant $C_{p_0}\sim_{p_0}1$, where the $\Delta_{T}^{(p_0),j}$'s are equivalently given by the arguments of any of the limits~\eqref{eq:formderdisj0}, \eqref{eq:formderdisj1} and~\eqref{eq:formderdisj2}, and further satisfy the bounds $|\Delta_{T}^{(p_0),j}|\le C^j$ for all $j\ge1$ (uniformly in $T,p_0$ and the moments of $\rho$).

Let $p_0\in[0,1]$ be fixed. Consider the approximations $\rho_\theta$ introduced in Step~1, and apply~\eqref{eq:devtaylorTfix} with $\rho$ replaced by $\rho_\theta$ (where obviously all moments of $\rho_\theta$ are finite). For any $k\ge1$, it follows from~\eqref{eq:devtaylorTfix} that the map $p\mapsto \xi\cdot A_{T,\theta}^{(p)}\xi$ is smooth (on the whole of $[0,1]$), and a Taylor expansion of the map around $p_0$ up to order $k$ gives, by Lagrange's remainder theorem, for all $-p_0\le p\le 1-p_0$,
\begin{align}\label{eq:approxok}
\bigg|\xi\cdot A^{(p_0+p)}_{T,\theta}\xi-\xi\cdot A^{(p_0)}_{T,\theta}\xi-\sum_{j=1}^kp^j\Delta_{T,\theta}^{(p_0),j}\bigg|\le p^{k+1}\sup_{u\in[0,1]}|\Delta_{T,\theta}^{(p_0 +up),k+1}|\le (Cp)^{k+1}.
\end{align}
From~\eqref{eq:approxhomcoeff} and~\eqref{eq:convatheta}, we learn that
\begin{align}\label{eq:convniv0}
\lim_{T\uparrow\infty}\lim_{\theta\uparrow\infty}(\xi\cdot A^{(p_0+p)}_{T,\theta}\xi-\xi\cdot A^{(p_0)}_{T,\theta}\xi)=\lim_{T\uparrow\infty}(\xi\cdot A^{(p_0+p)}_{T}\xi-\xi\cdot A^{(p_0)}_{T}\xi)=\xi\cdot A^{(p_0+p)}_{\hom}\xi-\xi\cdot A^{(p_0)}_{\hom}\xi.
\end{align}
Hence, in order to pass to the limit $T,\theta\uparrow \infty$ in \eqref{eq:approxok}, it is enough to prove that the limits 
\begin{equation}\label{eq:Ant-3.1}
\Delta^{(p_0),j}:=\lim_{T\uparrow\infty}\lim_{\theta\uparrow\infty}\Delta_{T,\theta}^{(p_0),j}
\end{equation}
all exist in $\R$, for all $j\ge1$. 
The combination of \eqref{eq:convniv0} and \eqref{eq:Ant-3.1} indeed yields that for any $k\ge1$, for all $-p_0\le p\le1-p_0$, we have
\[\bigg|\xi\cdot A^{(p_0+p)}_{\hom}\xi-\xi\cdot A^{(p_0)}_{\hom}\xi-\sum_{j=1}^kp^j\Delta^{(p_0),j}\bigg|\le (Cp)^{k+1},\]
which is equivalent to the analyticity statement of Theorem~\ref{th:analytic} (with convergence of the Taylor series at $p_0$ for all perturbations $p$ of magnitude $|p|< 1/C$, $-p_0\le p\le1-p_0$), and the derivatives $j!\Delta^{(p_0),j}$'s are then given by the desired well-defined limits stated in Corollary~\ref{cor:analytic}. In the particular case when the process $\rho$ has all its moments finite,  the regularization in $\theta$ can be omitted (so that only the limit in $T$ remains). The proof of formula~\eqref{eq:der1form} in Corollary~\ref{cor:analytic} is postponed to Step~5.

We prove \eqref{eq:Ant-3.1} by induction. 
The proof of the statement for $j=1$ is similar to the proof of the induction step, and we only display the latter.
Assume that the limits $\Delta^{(p_0),j}=\lim_T\lim_\theta\Delta_{T,\theta}^{(p_0),j}$ exist in $\R$ for all $1\le j\le k$, for some $k\ge1$.
We shall then prove that the limit $\Delta^{(p_0),k+1}=\lim_T\lim_\theta\Delta_{T,\theta}^{(p_0),k+1}$ also exists in $\R$. As $\Delta^{(p_0),k+1}_{T,\theta}$ is bounded uniformly in $T,\theta$, it  converges to some limit $L_T^{(p_0)}\in\R$ as $\theta\uparrow\infty$ up to extraction. Passing to the limit $\theta\uparrow \infty$ along a subsequence in inequality~\eqref{eq:approxok} with $k$ replaced by $k+1$, and using the induction assumptions and~\eqref{eq:convniv0}, 
we obtain for any $-p_0\le p\le 1-p_0$,
\[\bigg|\xi\cdot A^{(p_0+p)}_{T}\xi-\xi\cdot A^{(p_0)}_{T}\xi-\sum_{j=1}^{k}p^j\lim_\theta\Delta_{T,\theta}^{(p_0),j}-p^{k+1}L_T^{(p_0)}\bigg|\le (Cp)^{k+2}.\]
This proves that $L^{(p_0)}$ satisfies
\[L_T^{(p_0)}=\lim_{p\to0\atop -p_0\le p\le 1-p_0}\bigg(p^{-k-1}(\xi\cdot A^{(p_0+p)}_{T}\xi-\xi\cdot A^{(p_0)}_{T}\xi)-\sum_{j=1}^kp^{j-k-1}\lim_\theta\Delta_{T,\theta}^{(p_0),j}\bigg),\]
where in particular the limit must exist. Since the right-hand side does not depend on the extraction, $L_T^{(p_0)}$ is uniquely defined, and $L_T^{(p_0)}=\lim_\theta\Delta^{(p),k+1}$ does exists in $\R$.
A similar argument for the limit in $T$ shows that $\lim_T\lim_\theta\Delta^{(p_0),k+1}_{T,\theta}$  exists in $\R$, so that \eqref{eq:Ant-3.1} is proved.

\medskip

\step{3} Reduction by restriction to $p_0=0$.\\
Let $T>0$ be fixed and assume that $\E[\rho(Q)^s]<\infty$ for all $s\ge1$.
In the present step, we prove that it suffices to check the result~\eqref{eq:devtaylorTfix} at $p_0=0$: more precisely, it suffices to show that the map $p\mapsto \xi\cdot A^{(p)}_T\xi$ satisfies, for all $k\ge1$ and $p\in[0,1]$,
\begin{align}\label{eq:resat0plop}
\bigg|\xi\cdot A^{(p)}_T\xi-\xi\cdot A_T\xi-\sum_{j=1}^kp^j\Delta_T^{j}\bigg|\le (Cp)^{k+1},
\end{align}
for some constant $C\sim1$, where, for any $j\ge1$, $\Delta^j_T$ is equivalently given by formulas~\eqref{eq:rewriteSkdisj1}, \eqref{eq:rewriteSkdisj2} and~\eqref{eq:rewriteSkdisj3}, and satisfies the bound $|\Delta_T^j|\le C^j$ for all $j\ge1$, uniformly in $T$ and the moments of $\rho$.

First consider $p_0\in[0,1)$ and positive perturbations $p_0+p$ with $p\ge0$. For $0\le p\le 1-p_0$, choose a sequence $(d_n^{(p_0,p)})_n$ of i.i.d. Bernoulli random variables (independent of all the others) with parameter $\p[d_n^{(p_0,p)}=1]=p/(1-p_0)$, and consider the twice perturbed coefficients
\begin{align*}
A^{(p_0,p)}&=A^{(p_0)}\mathds1_{\R^d\setminus J_n}+\sum_{n}\left(d_n^{(p_0,p)}A'+(1-d_n^{(p_0,p)})A^{(p_0)}\right)\mathds1_{J_n}\\
&=A\mathds1_{\R^d\setminus\bigcup_nJ_n}+\sum_{n}\left((1-d_n^{(p_0,p)})(1-b_n^{(p_0)})A+(d_n^{(p_0,p)}+b_n^{(p_0)}(1-d_n^{(p_0,p)}))A'\right)\mathds1_{J_n}.
\end{align*}
The field $A^{(p_0,p)}$ has by definition the same distribution as $A^{(p_0+p)}$, and it is a perturbation of $A^{(p_0)}$ with perturbation parameter $p/(1-p_0)$ (and with perturbed medium $A'$). Applying to $A^{(p_0,p)}$ the result~\eqref{eq:resat0plop} around $0$ (which is assumed to hold), we deduce that the map $p\mapsto \xi\cdot A^{(p_0,p)}_{T}\xi=\xi\cdot A^{(p_0+p)}_{T}\xi$ satisfies, for any $k\ge1$, for all $0\le p\le1-p_0$,
\begin{align}\label{eq:taylortransldr}
\bigg|\xi\cdot A^{(p_0+p)}_{T}\xi-\xi\cdot A_{T}^{(p_0)}\xi-\sum_{j=1}^k\frac{p^j}{(1-p_0)^j}\tilde\Delta^{(p_0),j}_T\bigg|\le\bigg(\frac{C p}{1-p_0}\bigg)^{k+1},
\end{align}
where, for any $j\ge1$, $\tilde\Delta_T^{(p_0),j}$ is the $j$-th right-derivative at $0$ of the map $p\mapsto \xi\cdot\tilde A^{(p)}_T\xi$, corresponding to the ``reference'' coefficients $\tilde A:=A^{(p_0)}$ and the ``perturbed'' coefficients $A'$. The cluster formula~\eqref{eq:rewriteSkdisj3} reads in that case
\begin{align}\label{eq:defdeltatp0m}
\tilde \Delta_T^{(p_0),j}:=\sum_{|F|=j}\E\bigg[\sum_{G\subset F}(-1)^{|F\setminus G|}\xi\cdot A^{E^{(p_0)}\cup G}(\nabla\phi_T^{E^{(p_0)}\cup G}+\xi)\bigg],
\end{align}
where the sum is absolutely convergent. Now note that the argument of the expectation vanishes whenever $F\cap E^{(p_0)}\ne\varnothing$, while, otherwise, if $F\cap E^{(p_0)}=\varnothing$, the argument of the expectation equals
\[\sum_{G\subset F}(-1)^{|F\setminus G|}\xi\cdot A^{G\cup(E^{(p_0)}\setminus F)}(\nabla\phi_T^{G\cup(E^{(p_0)}\setminus F)}+\xi).\]
As this expression is obviously independent of the event $[F\cap E^{(p_0)}=\varnothing]$, we can then rewrite
\begin{align}
\tilde \Delta_T^{(p_0),j}&=\sum_{|F|=j}\E\bigg[\mathds1_{F\cap E^{(p_0)}=\varnothing}\sum_{G\subset F}(-1)^{|F\setminus G|}\xi\cdot A^{E^{(p_0)}\cup G}(\nabla\phi_T^{E^{(p_0)}\cup G}+\xi)\bigg]\nonumber\\
&=\sum_{|F|=j}\p[F\cap E^{(p_0)}=\varnothing]\E\bigg[\sum_{G\subset F}(-1)^{|F\setminus G|}\xi\cdot A^{E^{(p_0)}\cup G}(\nabla\phi_T^{E^{(p_0)}\cup G}+\xi)\bigg]\nonumber\\
&=(1-p_0)^j\Delta_T^{(p_0),k},\label{eq:equivdeltatildep}
\end{align}
where $\Delta_T^{(p_0),j}$ is defined as the argument of the limit~\eqref{eq:formderdisj2}. The expansion~\eqref{eq:taylortransldr} then becomes, for any $k\ge1$, for all $0\le p\le1-p_0$,
\begin{align}\label{eq:taylortransldr1}
\bigg|\xi\cdot A^{(p_0+p)}_T\xi-\xi\cdot A_T^{(p_0)}\xi-\sum_{j=1}^k{p^j}\Delta_T^{(p_0),j}\bigg|\le\bigg(\frac {Cp}{1-p_0}\bigg)^{k+1}.
\end{align}
Moreover, recalling the bound $|\Delta_T^j|\le C^j$ for the right-derivatives at $0$, which is assumed to hold for any choice of the coefficient (the constant $C$ only depends on $R,\Gamma,d,\lambda$), we conclude, for all $p_0\in[0,1)$,
\begin{align}\label{eq:boundder1}
|\Delta_T^{(p_0),j}|\le C^j(1-p_0)^{-j}.
\end{align}
Note that this estimate for the derivatives deteriorates when $p_0$ gets closer to $1$. This difficulty is overcome by considering negative perturbations, that is, looking at left-derivatives, as we do now.

Let us now consider $p_0\in(0,1]$ and negative perturbations at that point. For $0\le p\le p_0$, choose a sequence $( d_n^{(p_0,-p)})_n$ of i.i.d. Bernoulli random variables (independent of all the others) with $\p[ d_n^{(p_0,-p)}=1]=p/p_0$, and consider the twice perturbed coefficients
\begin{align*}
A^{(p_0,-p)}&=A^{(p_0)}\mathds1_{\R^d\setminus J_n}+\sum_{n}\left(d_n^{(p_0,-p)}A+(1-d_n^{(p_0,-p)})A^{(p_0)}\right)\mathds1_{J_n}\\
&=A\mathds1_{\R^d\setminus\bigcup_nJ_n}+\sum_{n}\left((d_n^{(p_0,-p)}+(1-d_n^{(p_0,-p)})(1-b_n^{(p_0)}))A+b_n^{(p_0)}(1-d_n^{(p_0,-p)})A'\right)\mathds1_{J_n}.
\end{align*}
The field $A^{(p_0,p)}$ has by definition the same distribution as $A^{(p_0-p)}$, and it is a perturbation of $A^{(p_0)}$ with perturbation parameter $p/p_0$ (and with ``perturbed'' medium $A$, instead of $A'$). Applying to $A^{(p_0,p)}$ the result~\eqref{eq:resat0plop} around $0$ (which is assumed to hold), we deduce that the map $p\mapsto \xi\cdot A^{(p_0,p)}_T\xi=\xi\cdot A^{(p_0-p)}_T\xi$ satisfies, for any $k\ge1$, for all $0\le p\le p_0$,
\begin{align}\label{eq:taylortranslg}
\bigg|\xi\cdot A_T^{(p_0-p)}\xi-\xi\cdot A_T^{(p_0)}\xi-\sum_{j=1}^k\frac{p^j}{p_0^j}\hat\Delta_T^{(p_0),j}\bigg|\le\bigg(\frac {Cp}{p_0}\bigg)^{k+1},
\end{align}
where, for any $j\ge1$, $\hat\Delta_T^{(p_0),j}$ is the $j$-th right-derivative of the map $p\mapsto \hat A^{(p)}_T$, corresponding to the ``reference'' coefficients $\hat A:=A^{(p_0)}$ and the ``perturbed'' coefficients $A$. The cluster formula~\eqref{eq:rewriteSkdisj3} gives in this case
\begin{align*}
\hat \Delta_T^{(p_0),j}:=\sum_{|F|=j}\E\bigg[\sum_{G\subset F}(-1)^{|F\setminus G|}\xi\cdot A^{E^{(p_0)}\setminus G}(\nabla\phi_T^{E^{(p_0)}\setminus G}+\xi)\bigg],
\end{align*}
where the sum is absolutely convergent. Arguing as above, the argument of the expectation vanishes unless $F\subset E^{(p_0)}$; hence, by the independence assumption, we obtain
\begin{align*}
\hat \Delta_T^{(p_0),j}=p_0^j\sum_{|F|=j}\E\bigg[\sum_{G\subset F}(-1)^{|F\setminus G|}\xi\cdot A^{(E^{(p_0)}\setminus F)\cup(F\setminus G)}(\nabla\phi_T^{(E^{(p_0)}\setminus F)\cup(F\setminus G)}+\xi)\bigg],
\end{align*}
or equivalently, by the change of variables $F\setminus G\leadsto H$,
\begin{align*}
\hat \Delta_T^{(p_0),j}=(-1)^jp_0^j\sum_{|F|=j}\E\bigg[\sum_{H\subset F}(-1)^{|F\setminus H|}\xi\cdot A^{H\cup (E^{(p_0)}\setminus F)}(\nabla\phi_T^{H\cup (E^{(p_0)}\setminus F)}+\xi)\bigg]=(-1)^jp_0^j\Delta_T^{(p_0),j},
\end{align*}
where, as before, $\Delta_T^{(p_0),k}$ is defined as the argument of the same limit~\eqref{eq:formderdisj2}. The expansion~\eqref{eq:taylortransldr} then becomes, for any $k\ge1$, for all $0\le p\le p_0$,
\begin{align}\label{eq:taylortransldr2}
\bigg|\xi\cdot A^{(p_0-p)}_T\xi-\xi\cdot A_T^{(p_0)}\xi-\sum_{j=1}^k{(-p)^j}\Delta_T^{(p_0),j}\bigg|\le\bigg(\frac {Cp}{p_0}\bigg)^{k+1}.
\end{align}
By the bounds $|\Delta_T^j|\le C^j$ for the right-derivatives at $0$, we conclude, for all $p_0\in[0,1)$,
\begin{align}\label{eq:boundder2}
|\Delta_T^{(p_0),j}|\le C^jp_0^{-j}.
\end{align}

Combining~\eqref{eq:taylortransldr1} and~\eqref{eq:taylortransldr2} then directly yields the desired result~\eqref{eq:devtaylorTfix}. Moreover, combining~\eqref{eq:boundder1} and~\eqref{eq:boundder2} gives, for any $j\ge1$, the uniform bound
\[|\Delta_T^{(p_0),j}|\le  \min\{C^jp_0^{-j},C^j(1-p_0)^{-j}\}\le (2C)^j.\]
Finally, arguing as in Lemma~\ref{lem:alter} (where the argument is performed at $p_0=0$ and proves the equivalence between formulas~\eqref{eq:rewriteSkdisj1}, \eqref{eq:rewriteSkdisj2} and~\eqref{eq:rewriteSkdisj3}), we see that, for fixed $T,\theta$, the cluster formula for $\Delta_T^{(p_0),j}$, that is the argument of the limit~\eqref{eq:formderdisj2}, is equivalent to the formulas given by the argument of the limits~\eqref{eq:formderdisj0} and~\eqref{eq:formderdisj1}.

\medskip

\step{4} Conclusion.\\
Let $T>0$ be fixed, and assume that $\E[\rho(Q)^s]<\infty$ for all $s\ge1$. For any $k\ge1$, Corollary~\ref{cor:reslimT} exactly asserts~\eqref{eq:resat0plop}, Lemma~\ref{lem:alter} ensures that the $\Delta_T^j$'s are equivalently given by formulas~\eqref{eq:rewriteSkdisj1}, \eqref{eq:rewriteSkdisj2} and~\eqref{eq:rewriteSkdisj3}, and Proposition~\ref{prop:apriori} gives the uniform bounds $|\Delta_T^j|\le C^j$, for all $j\ge1$. By the previous steps, this proves Theorem~\ref{th:analytic} and Corollary~\ref{cor:analytic}.

\medskip

\step5 Exact formula for the first derivative.\\
In this last step, we further assume that $\rho(Q)\le\theta_0$ a.s. (so that in particular all the moments are bounded, and we can thus everywhere omit the regularization in $\theta$), and we prove under that assumption the validity of formula~\eqref{eq:der1form} in Corollary~\ref{cor:analytic}. More precisely, we need to prove that we can pass to the limit in $T$ inside the formula for the first approximate derivative
\[\Delta_T^{(p_0),1}=\sum_n\E\bigg[(\nabla\phi_T^{E^{(p_0)}\setminus\{n\}}+\xi)\cdot C^{\{n\}}(\nabla\phi_T^{\{n\}\cup E^{(p_0)}}+\xi)\bigg],\]
i.e. we prove that the well-defined limit $\Delta^{(p_0),1}=\lim_T\Delta^{(p_0),1}_{T}$ is given by the following formula:
\begin{align}\label{eq:der1formex}
\Delta^{(p_0),1}=\sum_n\E\bigg[(\nabla\phi^{E^{(p_0)}\setminus\{n\}}+\xi)\cdot C^{\{n\}}(\nabla\phi^{\{n\}\cup E^{(p_0)}}+\xi)\bigg],
\end{align}
with an abolutely converging sum. As before, we can restrict to $p_0=0$, and shall prove that the limit $\Delta^1:=\lim_T\Delta_{T}^1$ exists and is given by
\[\Delta^1=\sum_n\E\bigg[(\nabla\phi+\xi)\cdot C^{\{n\}}(\nabla\phi^{\{n\}}+\xi)\bigg].\]

We start by showing that the sum is absolutely convergent. Decomposing $\nabla\phi^{\{n\}}=\nabla\delta^{\{n\}}\phi+\nabla\phi$, using assumption~\eqref{eq:boundrho} in the form $\sum_{n}\mathds1_{J_n}\lesssim1$ (see also~\eqref{eq:firstboundsumchi+}), and recalling the elementary energy estimate $\E[1+|\nabla\phi|^2]\lesssim1$ (see~\eqref{eq:modif-corr-estim}), we have
\begin{align*}
|\Delta^{1}|&\lesssim \sum_n\E[\mathds1_{J_n}(1+|\nabla\phi|^2+|\nabla\phi^{\{n\}}|^2)]\lesssim1+\sum_n\E[|\nabla\delta^{\{n\}}\phi|^2],
\end{align*}
where the last sum is finite by Lemma~\ref{lem:aprioriproba} (for $k=1$) and the fact that $\nabla\delta^{\{n\}}\phi_T\cvf{}\nabla\delta^{\{n\}}\phi$ weakly in $\Ld^2_\loc(\R^d;\Ld^2(\Omega))$.

We now prove that $\lim_T\Delta_{T}^1=\Delta^1$. Given $L\sim1$, the additional assumption $\rho(Q)\le\theta_0$ a.s. implies by stationarity $\rho(B_{R+L})\le C\theta_0=:Z$, with $C\sim1$. Hence, we can choose the measurable enumeration $(q_n)_n$ of the point process $\rho$ in such a way that $B_{R+L}\cap(q_n)_n\subset (q_n)_{n=1}^{Z}$. Defining
\[a_L^n:=\E\bigg[\fint_{B_L}(\nabla\phi+\xi)\cdot C^{\{n\}}(\nabla\phi^{\{n\}}+\xi)\bigg],\quad\text{and}\quad a_{T,L}^n:=\E\bigg[\fint_{B_L}(\nabla\phi_{T}+\xi)\cdot C^{\{n\}}(\nabla\phi_{T}^{\{n\}}+\xi)\bigg],\]
we observe $\Delta^1=\sum_{n=1}^Z a_L^n$ and $\Delta_{T}^1=\sum_{n=1}^Z a_{T,L}^n$.
Indeed, by stationarity (together with absolute convergence), $\Delta^1=\sum_{n=1}^\infty a_L^n$, so that $\Delta^1=\sum_{n=1}^Z a_L^n$ by the choice of the measurable enumeration, and likewise for $\Delta_T^1$. Therefore, it is enough to prove $\lim_{T}a^n_{T,L}=a_L^n$ for any $1\le n\le \Gamma$. Since $\nabla\phi_T^{\{n\}}\cvf{}\nabla\phi^{\{n\}}$ weakly and $\nabla\phi_T\to\nabla\phi$ strongly in $\Ld^2_\loc(\R^d,\Ld^2(\Omega))$ (see \cite[Theorem~1]{Gloria-12}), we directly get $a_{T,L}^n\to a_L^n$ as $T\uparrow\infty$, for any $n$, as desired.

\subsection{Proof of Corollaries~\ref{cor:invpr}, \ref{cor:cm} and~\ref{cor:cm2}: Clausius-Mossotti formulas}

In this section we further assume that $\E[\rho(Q)^2]<\infty$. (Note that, in the case of Corollaries~\ref{cor:cm} and~\ref{cor:cm2}, this directly follows from assumption~\eqref{eq:boundrho} together with the fact that we are then dealing with ball inclusions of fixed radius.)

\subsubsection{First-order universality principle}\label{chap:invpr}
Set $J^{(p)}:=\bigcup_{n\in E^{(p)}}J_n$. The volume fraction $v_p$ of the perturbation is defined as follows:
\[v_p:=\lim_{L\uparrow\infty}\frac{\E[|LQ\cap J^{(p)}|]}{L^d},\]
or equivalently, by stationarity of the inclusion process,
\[v_p=\lim_{L\uparrow\infty}L^{-d}\sum_{z\in LQ\cap \Z^d}{\E[|(z+Q)\cap J^{(p)}|]}=\E[|Q\cap J^{(p)}|].\]
An inclusion-exclusion argument gives
\begin{align*}
\sum_{n}\E[\mathds1_{n\in E^{(p)}}|Q\cap J_n|]-\sum_{n\ne m}\E[\mathds1_{n,m\in E^{(p)}}|Q\cap J_n\cap J_m|]\le v_p\le \sum_{n}\E[\mathds1_{n\in E^{(p)}}|Q\cap J_n|].
\end{align*}
By the independence between the Bernoulli process $E^{(p)}$ and all the other random variables,
this turns into
\begin{align*}
p\sum_{n}\E[|Q\cap J_n|]-p^2\sum_{n\ne m}\E[|Q\cap J_n\cap J_m|]\le v_p\le p\sum_{n}\E[|Q\cap J_n|].
\end{align*}
As $J_n\subset B_R(q_n)$ for all $n$, we note that, by the assumption $\E[\rho(Q)^2]<\infty$,
\[\sum_{n\ne m}\E[|Q\cap J_n\cap J_m|]\le \E\bigg[\sum_{n\ne m}\mathds1_{q_n,q_m\in Q+B_R}\bigg]=\E[\rho(Q+B_R)^2]<\infty,\]
so that we have indeed proven
\[v_p=p\sum_{n}\E[|Q\cap J_n|]+O(p^2)=:p\gamma+O(p^2).\]
If $\gamma=0$, then $\cup_n J_n=\varnothing$ so that $A^{(p)}_\ho=A_\ho$ and $v_p=0$, and there is nothing left to prove.
If $\gamma \neq 0$, since
\[\gamma\le\E\bigg[\sum_n\mathds1_{q_n\in Q+B_R}\bigg]=\E[\rho(Q+B_R)]<\infty,\] we have  $v_p\sim_\gamma p+O(p^2)$. In particular, the expansion in $p$ in Theorem~\ref{th:analytic} at first order can as well be rewritten as an expansion in $v_p$: at first order at $p_0=0$, we have, for any $p\ge0$,
\[A^{(p)}_{\hom}=A_{\hom}+Kv_p+O(v_p^2),\]
where $K$ is given by
\[\xi\cdot K\xi=\frac1{\gamma}\xi\cdot A_{\hom}^{(0),1}\xi=\frac1{\gamma}\E\left[\sum_n(\nabla\phi+\xi)\cdot C^{\{n\}}(\nabla\phi^{\{n\}}+\xi)\right].\]

If in addition the random volumes $|J_n^\circ|$'s are i.i.d. and independent of the point process $\rho$ (and of its enumeration), then $\gamma$ can be computed more explicitly. By stationarity of the inclusion process, for any $L>0$,
\[\gamma=\lim_{L\uparrow\infty}L^{-d}\E\bigg[\sum_n|LQ\cap J_n|\bigg],\]
where we can estimate
\begin{align}\label{eq:bornesupgam}
\E\bigg[\sum_n|LQ\cap J_n|\bigg]\le\sum_n\E[\mathds1_{q_n\in LQ}|J_n|]=\E[|J_0^\circ|]\E[\rho(LQ)],
\end{align}
and also
\begin{align}\label{eq:borneinfgam}
\E\bigg[\sum_n|LQ\cap J_n|\bigg]\ge\sum_n\E[\mathds1_{q_n\in (L-R)Q}|J_n|]=\E[|J_0^\circ|]\E[\rho((L-R)Q)].
\end{align}
Now, for all continuous and integrable functions $f:\R^d\to \R$, we have $\E[\sum_nf(q_n)]=\E[\int fd\rho]=\int fd\E[\rho]$. Since $\rho$ is stationary, the Borel measure $\E[\rho]$ is translation-invariant, and hence, since it is locally finite by definition, it is a multiple of the Lebesgue measure: $\E[\rho]=\sigma dx$ for some constant $\sigma\in\R^+$, which is characterized e.g. by $\sigma=\E[\rho(Q)]$.
In these terms, \eqref{eq:bornesupgam} and~\eqref{eq:borneinfgam} give
\[\sigma\E[|J_0^\circ|]=\E[|J_0^\circ|]\lim_L L^{-d}\E[\rho((L-R)Q)]\le\gamma\le\E[|J_0^\circ|]\lim_L L^{-d}\E[\rho(LQ)]=\sigma\E[|J_0^\circ|],\]
which means $\gamma=\sigma\E[|J_0^\circ|]$, and thus
\begin{align}\label{eq:calculvp}
v_p=p\sigma\E[|J_0^\circ|].
\end{align}
The matrix $K$ then takes the form
\[\xi\cdot K\xi=\frac1{\sigma\E[|J^\circ_0|]}\E\bigg[\sum_n(\nabla\phi+\xi)\cdot C^{\{n\}}(\nabla\phi^{\{n\}}+\xi)\bigg].\]
Further assuming that $\rho$ is independent of $A$, of $(A_n')_n$ (as well as of the random volumes $|J_n^\circ|$'s), we note that the random variable $\E[(\nabla\phi(0)+\xi)\cdot C^{\{n\}}(0)(\nabla\phi^{\{n\}}(0)+\xi)\|\rho]$ only depends on the point process $\rho$ through the point $q_n$, so that it can be written as $f(q_n)$ for some measurable function $f$. In these terms, we get
\[\xi\cdot K\xi=\frac1{\sigma\E[|J^\circ_0|]}\E\bigg[\sum_nf(q_n)\bigg]=\frac1{\E[J^\circ_0|]}\int_{\R^d} f(x)dx,\]
which does clearly no longer depend on the choice of the point process $\rho$. This proves Corollary~\ref{cor:invpr}.
\qed

\subsubsection{Electric Clausius-Mossotti formula}

We consider the case when the inclusions are spherical $J_n=B_R(q_n)$, and the unperturbed and perturbed coefficients have the form $A=\alpha\Id$ and $A'=\beta\Id$ respectively. We shall compute explicitly the first derivative $\xi\cdot A^{(0),1}_{\hom}\xi$ of the perturbed homogenized coefficient at $0$, as given by formula~\eqref{eq:der1form}. As inclusions are balls of fixed radius $R$, assumption~\eqref{eq:boundrho} implies $\rho(Q)\le\theta_0$ a.s. for some constant $\theta_0>0$, so that we can indeed apply formula~\eqref{eq:der1form}.

Since $A$ is constant, the unique gradient solution of $-\nabla\cdot A(\nabla\phi_{\xi}+\xi)=0$ is clearly $\nabla\phi_{\xi}=0$. Let now $n$ be fixed. The solution $\phi^{\{n\}}_{\xi}\in H^1(\R^d)$ of $-\nabla\cdot A^{\{n\}}(\nabla\phi_{\xi}^{\{n\}}+\xi)=0$ is easily checked to be unique if it exists, and its existence follows from a direct computation. Since $A^{\{n\}}$ is constant both inside and outside $B_R(q_n)$, the solution $\phi^{\{n\}}_\xi$ is radial and of the form
\[\phi^{\{n\}}_\xi(x)=\psi_{\xi}(x-q_n)=\begin{cases}
C(x-q_n)\cdot \xi,&\text{for $|x-q_n|<R$};\\
C'\frac{(x-q_n)\cdot\xi}{|x-q_n|^{d}},&\text{for $|x-q_n|>R$;}
\end{cases}\]
so that its gradient satisfies
\[\nabla\phi_\xi^{\{n\}}(x)=\nabla\psi_{\xi}(x-q_n)=\begin{cases}
C\xi,&\text{for $|x-q_n|<R$};\\
\frac{C'}{|x-q_n|^d}\left(\xi-d\frac{(x-q_n)\cdot\xi}{|x-q_n|}\frac{x-q_n}{|x-q_n|}\right)
,&\text{for $|x-q_n|>R$.}
\end{cases}\]
Since $\phi^{\{n\}}_\xi$ is radial and in $H^1(\R^d)$, it is continuous, which implies that $C'=CR^{d}$.
The normal component of $A^{\{n\}}(\nabla\phi^{\{n\}}_\xi+\xi)$ must also be continuous through the sphere, so that we conclude
\begin{align}\label{eq:defCcst}
C=\frac{\alpha-\beta}{\beta+\alpha(d-1)}.
\end{align}
This allows us to turn~\eqref{eq:der1form} into an explicit formula for the first derivative $A^{(0),1}_{\hom}$:
\begin{align*}
\xi\cdot A^{(0),1}_{\hom}\xi&=\sum_n\E[(\nabla\phi+\xi)\cdot C^{\{n\}}(\nabla\phi^{\{n\}}+\xi)]\\
&=\sum_n\E[\xi\cdot (\beta-\alpha)\mathds1_{B_R(q_n)}(C\xi+\xi)]=(1+C)(\beta-\alpha)\,\E\bigg[\sum_n\mathds1_{q_n\in B_R}\bigg].
\end{align*}
From Paragraph~\ref{chap:invpr} above, we learn that $\E[\sum_nf(q_n)]=\sigma\int f(x)dx$ for any continous and integrable function $f$. Hence,
\[\xi\cdot A^{(0),1}_{\hom}\xi=(1+C)(\beta-\alpha)\,\E\bigg[\sum_n\mathds1_{B_R}(q_n)\bigg]=\sigma|B_R|(1+C)(\beta-\alpha),\]
so that expression~\eqref{eq:defCcst} for $C$ yields
\[\xi\cdot A^{(0),1}_{\hom}\xi=\sigma|B_R|\frac{\alpha d(\beta-\alpha)}{\beta+\alpha(d-1)}.\]
In the present case, formula~\eqref{eq:calculvp} holds true and gives $v_p=p\sigma|B_R|$.
The conclusion of Corollary~\ref{cor:cm} now follows from Theorem~\ref{th:analytic}.\qed

\subsubsection{Elastic Clausius-Mossotti formula}
We consider the case of spherical inclusions $J_n=B_R(q_n)$ and assume that both the unperturbed stiffness tensor $A$ and the perturbed stiffness tensor $A'$ are constant and isotropic --- we denote by $K,G$ and $K',G'$ their respective bulk and shear moduli. We shall compute explicitly in that case the first derivative $\xi\cdot A^{(0),1}_{\hom}\xi$ of the perturbed homogenized stiffness tensor $A^{(p)}_{\hom}$ at $0$, as given by formula~\eqref{eq:der1form}. Indeed, as inclusions are balls of fixed radius $R$, assumption~\eqref{eq:boundrho} implies $\rho(Q)\le\theta_0$ a.s. for some constant $\theta_0>0$, so that we can apply formula~\eqref{eq:der1form}.

Let $\xi\in\R^{d\times d}$ be symmetric. Since $A$ is constant, the unique gradient solution of $-\nabla\cdot A:(\nabla\phi_\xi+\xi)=0$ is clearly $\nabla\phi_\xi=0$. Let now $n$ be fixed. As shown e.g. in Section~17.2.1 of~\cite{Torquato-02}, equation $-\nabla\cdot A^{\{n\}}:(\nabla\phi_\xi^{\{n\}}+\xi)$ admits a (necessarily unique) solution in $H^1(\R^d)$. Inside the inclusion $B_R(q_n)$, that is, for all $|x-q_n|<R$ (see equation~(17.84) of~\cite{Torquato-02}),
\begin{align}\label{eq:soltorq}
\nabla\phi_\xi^{\{n\}}(x)+\xi=\Id\frac{\Tr\xi}d\frac{K+\beta}{K'+\beta}+\left(\xi-\Id\frac{\Tr\xi}d\right)\frac{G+\alpha}{G'+\alpha},
\end{align}
where $\alpha,\beta$ are defined by~\eqref{eq:defab}. Recalling that $\xi:A:\chi=2G\xi:\chi+\lambda\Tr\xi\Tr\chi$ for any symmetric $\chi\in\R^{d\times d}$, we can now explicitly compute formula~\eqref{eq:der1form} for the first derivative $A^{(0),1}_{\hom}$,
\begin{align*}
\frac12\xi: A^{(0),1}_{\hom}:\xi&=\frac12\sum_n\E\left[\mathds1_{B_R(q_n)}\xi: (A'-A):(\nabla\phi^{\{n\}}_\xi+\xi)\right]\\
&=\sum_n\E\left[\mathds1_{B_R(q_n)}\left((G'-G)\xi:(\nabla\phi^{\{n\}}_\xi+\xi)+\frac12(\lambda'-\lambda)\Tr\xi\Tr(\nabla\phi^{\{n\}}_\xi+\xi)\right)\right],
\end{align*}
and hence, using~\eqref{eq:soltorq}, and recalling from Paragraph~\ref{chap:invpr} that $\E[\sum_n\mathds1_{B_R(q_n)}]=\sigma|B_R|$,
\begin{align*}
\frac1{2\sigma|B_R|}\xi: A^{(0),1}_{\hom}:\xi&=\frac1d(\Tr\xi)^2\frac{K+\beta}{K'+\beta}\left((G'-G)+\frac d2(\lambda'-\lambda)\right)\\
&\hspace{2cm}+\left(|\xi|^2-\frac1d(\Tr\xi)^2\right)(G'-G)\frac{G+\alpha}{G'+\alpha}.
\end{align*}
In terms of bulk moduli $K=\lambda+2G/d$ and $K'=\lambda'+2G'/d$, this takes the form
\begin{align}\label{eq:devderelast}
\frac1{2\sigma|B_R|}\xi: A^{(0),1}_{\hom}:\xi&=\frac12(\Tr\xi)^2(K'-K)\frac{K+\beta}{K'+\beta}+\left(|\xi|^2-\frac1d(\Tr\xi)^2\right)(G'-G)\frac{G+\alpha}{G'+\alpha}.
\end{align}
As formula~\eqref{eq:calculvp} again holds true in the present case and gives $v_p=p\sigma|B_R|$, Corollary~\ref{cor:cm2} now follows by Theorem~\ref{th:analytic}.
\qed

\subsection{Proof of Corollary~\ref{cor:rates}: convergence rates}
Let $p_0\in[0,1]$ be fixed, and assume that $\E[\rho(Q)^s]<\infty$ for all $s\ge1$. Estimate~\eqref{eq:approxok} in the proof of Theorem~\ref{th:analytic} (see Section~\ref{chap:proofth1}) then yields, for all $k\ge1$,
\begin{align*}
\bigg|\sum_{j=1}^kp^j(\Delta_T^{(p_0),j}-\Delta_{2T}^{(p_0),j})\bigg|&\le (Cp)^{k+1}+|A_T^{(p_0+p)}-A_{2T}^{(p_0+p)}|+|A_T^{(p_0)}-A_{2T}^{(p_0)}|\\
&\lesssim (Cp)^{k+1}+\E[|\nabla(\phi_T^{(p_0+p)}-\phi_{2T}^{(p_0+p)})|]+\E[|\nabla(\phi_T^{(p_0)}-\phi_{2T}^{(p_0)})|],
\end{align*}
and hence, combining this with assumption~\eqref{eq:quantas},
\begin{align}\label{eq:respresqueokplopda}
\bigg|\sum_{j=1}^kp^j(\Delta_T^{(p_0),j}-\Delta_{2T}^{(p_0),j})\bigg|&\le(Cp)^{k+1}+C\gamma(T).
\end{align}
By induction, we easily see that this implies, for all $j\ge1$,
\begin{align}\label{eq:toshowrecda}
|\Delta_T^{(p_0),j}-\Delta_{2T}^{(p_0),j}|&\le (2C)^{j+1}\gamma(T)^{2^{-j}}.
\end{align}
Estimate~\eqref{eq:respresqueokplopda} with $k=1$ gives $|\Delta_T^{(p_0),j}-\Delta_{2T}^{(p_0),j}|\le C^2p+C\gamma(T)/p$, which turns into \eqref{eq:toshowrecda} for $j=1$ with the choice $p=\gamma(T)^{\frac12}$. Assume now that the result~\eqref{eq:toshowrecda} is proven for all $0\le j\le J$. Then, equation~\eqref{eq:respresqueokplopda} for $k=J+1$ gives
\begin{align*}
|\Delta_T^{(p_0),J+1}-\Delta_{2T}^{(p_0),J+1}|&\le C^{J+2}p+Cp^{-J-1}\gamma(T)+\sum_{j=1}^J(2C)^jp^{j-J-1}\gamma(T)^{2^{-j}}.
\end{align*}
With the choice $p=\gamma(T)^{2^{-J-1}}$, and noting that $(l+1)2^{-l}\le1$ for any $l\in\N$,
this turns into
\begin{align*}
|\Delta_T^{(p_0),J+1}-\Delta_{2T}^{(p_0),J+1}|&\le C^{J+2}\gamma(T)^{2^{-J-1}}+C\gamma(T)^{1-(J+1)2^{-J-1}}\\
&\hspace{1cm}+\sum_{j=1}^J(2C)^{j+1}\gamma(T)^{2^{-j}(1-(J+1-j)2^{-(J+1-j)})}\\
&\le C^{J+2}\gamma(T)^{2^{-J-1}}+C\gamma(T)^{2^{-1}}+\sum_{j=1}^J(2C)^{j+1}\gamma(T)^{2^{-j-1}}\\
&\le C^{J+2}\gamma(T)^{2^{-J-1}}\left(2+\sum_{j=1}^J2^{j+1}\right)\le(2C)^{J+2}\gamma(T)^{2^{-J-1}},
\end{align*}
which proves~\eqref{eq:toshowrecda} by induction, and concludes the proof of Corollary~\ref{cor:rates}.\qed

\section*{Acknowledgements}
The authors acknowledge financial support from the European Research Council under
the European Community's Seventh Framework Programme (FP7/2014-2019 Grant Agreement
QUANTHOM 335410).

\def\cprime{$'$} \def\cprime{$'$}

\end{document}